\newtheorem{theorem}{Theorem}[section]
\newtheorem{lemma}[theorem]{Lemma}
\newtheorem{corollary}[theorem]{Corollary}
\newtheorem{remark}[theorem]{Remark}
\newtheorem{remarks}[theorem]{Remarks}
\numberwithin{equation}{section}
\newcommand{\Q}{{\mathbb{Q}}}
\newcommand{\Z}{{\mathbb{Z}}}
\newcommand{\F}{{\mathbb{F}}}
\newcommand{\ds}{\displaystyle}
\newcommand{\ov}{\overline}
\newcommand{\wt}{\widetilde}
\newcommand{\ft}{\footnotesize}
\newcommand{\ns}{\normalsize}
\newcommand{\CE}{{\mathcal E}}
\newcommand{\CG}{{\mathcal G}}
\newcommand{\CH}{{\mathcal H}}
\newcommand{\CR}{{\mathcal R}}
\newcommand{\CS}{{\mathcal S}}
\newcommand{\CT}{{\mathcal T}}
\newcommand{\CU}{{\mathcal U}}
\newcommand{\CW}{{\mathcal W}}
\newcommand{\CX}{{\mathcal X}}
\newcommand{\order}{\raise0.8pt \hbox{${\scriptstyle \#}$}}
\newcommand{\lien}{\mathrel{\mkern-4mu}}
\newcommand{\too}{\relbar\lien\rightarrow}
\newcommand{\plus}{\ds\mathop{\raise 0.5pt \hbox{$\bigoplus$}}\limits}
\newcommand{\prd}{\ds\mathop{\raise 1.0pt \hbox{$\prod$}}\limits}
\newcommand{\sm}{\ds\mathop{\raise 1.0pt \hbox{$\sum$}}\limits}
\newcommand{\ffrac}[2]{\hbox{\ft $\displaystyle\frac{#1}{#2}$}}
\newcommand{\Gal}{{\rm Gal}}
\newcommand{\ram}{{\rm ram}}
\newcommand{\ab}{{\rm ab}}
\newcommand{\nr}{{\rm nr}}
\newcommand{\gen}{{\rm gen}}
\newcommand{\pr}{{\rm pr}}
\newcommand{\bp}{{\rm bp}}
\newcommand{\ta}{{\rm ta}}
\newcommand{\rk}{{\rm rk}}
\newcommand{\1}{{\chi_0^{}}}
\newcommand{\Cha}{{\rm III}}
\newcommand{\Hom}{{\rm H}}
\newcommand{\Norm}{\hbox{\bf$\textbf{N}$}}
\newcommand{\BC}{\hbox{\bf$\textbf{C}$}}
\newcommand{\BD}{\hbox{\bf$\textbf{D}$}}
\newcommand{\BH}{\hbox{\bf$\textbf{H}$}}
\newcommand{\BJ}{\hbox{\bf$\textbf{J}$}}
\newcommand{\BK}{\hbox{\bf$\textbf{K}$}}
\newcommand{\BL}{\hbox{\bf$\textbf{L}$}}
\newcommand{\BR}{\hbox{\bf$\textbf{R}$}}
\newcommand{\BT}{\hbox{\bf$\textbf{T}$}}
\newcommand{\BV}{\hbox{\bf$\textbf{V}$}}
\newcommand{\BX}{\hbox{\bf$\textbf{X}$}}
\newcommand{\BZ}{\hbox{\bf$\textbf{Z}$}}
\author[Georges Gras]{Georges Gras}
\address{\hspace{-0.5cm}Villa la Gardette, 4 chemin Ch\^ateau Gagni\`ere, 
38520 Le Bourg d'Oisans \rm {\url{http://orcid.org/0000-0002-1318-4414} } }
\email{g.mn.gras@wanadoo.fr}
\keywords{abelian fields; $\BK$-theory; Hilbert's kernel; $p$-adic $\zeta$ and 
$L$-functions, Stickelberger's element, class field theory, PARI programs}
\subjclass{11R42, 11R70, 11S40, 19F15, 11R37}
\begin{document}

\title[Genus theory of $p$-adic pseudo-measures]
{Genus theory of $p$-adic pseudo-measures \\ Hilbert's kernels \& abelian $p$-ramification}

\date{January 17, 2024}

\begin{abstract} We consider, for real abelian fields $K$, the Birch--Tate 
formula linking $\order \BK_2(\BZ_K)$ to $\zeta_K(-1)$; 
we compare, for quadratic and cyclic cubic fields with $p \in \{2, 3\}$, 
$\order \BK_2(\BZ_K)[p^\infty]$ to the order of the torsion group 
$\CT_{K, p}$ of abelian $p$-ramification theory given by the residue 
of $\zeta_{K, p}(s)$ at $s=1$. This is done via the ``genus theory'' of 
$p$-adic pseudo-measures, inaugurated in the 1970/80's and the fact 
that $\CT_{K, p}$ only depends on the $p$-class group and on the 
normalized $p$-adic regulator of $K$ (Theorem \ref{main}\,{\bf A}). We 
apply this to prove a conjecture of Deng--Li giving the structure of 
$\BK_2(\BZ_K)[2^\infty]$ for an interesting family of real quadratic 
fields (Theorem \ref{main}\,{\bf B}). Then, for $p \geq 5$, we give a 
lower bound of $\rk_p(\BK_2(\BZ_K))$ in cyclic $p$-extensions $K/\Q$
(Theorem \ref{main}\,{\bf C}). Complements, PARI programs and tables 
are given in Appendices.
\end{abstract}

\maketitle 

\tableofcontents

\section{Introduction and prerequisites}
The aim of this paper is to utilize the link between $p$-adic $\BL$-functions
$\BL_p(s, \chi)$, of even Dirichlet's characters $\chi = \varphi \psi$ ($\varphi$ 
of prime-to-$p$ order, $\psi$ of {\it non-trivial} $p$-power order), and classical 
arithmetic $p$-invariants of the corresponding cyclic number field $K$, then to 
show that these invariants have some connections due to the character $\psi$ 
(genus theory principle). This connection does exist because of reflection 
theorems of class field theory and this allows to get additional informations.

\smallskip
Let's give some notations for the description of these methods and the
statement of the results:

\subsection{Notations}\label{notations}
(i) Let $\chi =: \varphi \psi$ be an even character, where $\varphi$ is of 
prime-to-$p$ order and $\psi$ of $p$-power order $p^e$, $e \geq 1$. 
For $a \in \Z_p^\times$, let $\theta (a)$ be the unique $\xi \in {\rm tor}_{\Z_p}(\Z_p^\times)$ 
such that $a \equiv \xi \pmod p$ for $p \ne 2$ (resp. $a \equiv \xi \pmod 4$ for $p=2$).
The character $\theta$ will be considered as the cyclotomic character of $\Q(\mu_q)$ 
($q = p$ or $4$).

\smallskip
Let $K$ be the real cyclic field fixed by $\chi$ and let $M$ be the 
subfield of $\Q^\ab$ fixed by the character $\theta^{-1}\,\chi$;
since $K$ is real, $M$ is an imaginary field.
Let $M_0$ be the maximal subfield of $M$ of prime-to-$p$ degree:

\smallskip
\quad $\bullet$ for $p \ne 2$, then $M \subseteq K(\mu_p)$, and $M_0$ is the  fixed field of
$\theta^{-1}\,\varphi$ since $\theta$ is of order $p-1$; if $\varphi = \1$ (the unit character), 
$M_0=\Q(\mu_p)$;

\smallskip
\quad$\bullet$ for $p = 2$, then $M \subset K(\sqrt {-1})$, and $M_0$ is the fixed field 
of $\varphi$ since $\theta$ is of order $2$; if $\varphi = \1$, $M_0=\Q$.

\smallskip
(ii) Let ${\mathfrak m}$ be the maximal ideal of $\Q_p(\mu_{p^e})$ and let
$v_{\mathfrak m}$ be the corresponding valuation with image $\Z$ (so, 
$v_{\mathfrak m} = (p-1)\,p^{e-1} \cdot v_p$).

\smallskip
(iii) For any prime number $r$, let $p^{d_r}$ be the degree of the splitting field
of $r$ in $M/M_0$; let $\BD = \sum_{\ell} p^{d_\ell}$, for $\ell \ne p$
ramified in $M/M_0$ and totally split in $M_0/\Q$; whence for $\ell \ne p$
such that:

\smallskip
\quad $\bullet$ for $p \ne 2$: $\psi(\ell) = 0$ and $(\theta^{-1}\,\varphi) (\ell) = 1$;

\smallskip
\quad $\bullet$ for $p = 2$: $\psi(\ell) = 0$ and $\varphi(\ell) = 1$ (let $\widehat K$ 
with $[K : \widehat K] = 2$; then $M$ is the quadratic extension of $\widehat K$
distinct from $K$ and $\widehat K(\sqrt {-1})$).

\subsection{Overview of the method}\label{method}
Genus theory does exist for $\BL_p$-functions, 
as this was initiated in the 1970/80's with the genus theory of $p$-adic abelian 
pseudo-measures \cite[Th\'eor\`eme (0.3)]{Gr33}, \cite[Th\'eor\`eme (0.1)]{Gr34} 
including the tricky split case $\theta^{-1}\varphi (p) = 1$.
The results of genus theory under consideration claim that:
\begin{equation}\label{either/or}
\left\{\begin{aligned}
\hbox{either}\ \ \ \ v_{\mathfrak m} \big(\hbox{$\frac{1}{2}$}\,\BL_p(s, \chi) \big) 
& > \BC(s),\,\forall s \in \Z_p, \\
\hbox{or}\ \ \ \ v_{\mathfrak m} \big(\hbox{$\frac{1}{2}$}\,\BL_p(s, \chi) \big) 
& = \BC(s),\,\forall s \in \Z_p, 
\end{aligned} \right.
\end{equation}
for an explicit $\BC(s)$, which is a constant $\BC$ as soon as 
$\theta^{-1}\varphi (p) \ne1$ (see Theorem \ref{thmfond} defining
$\BC(s)$, essentially depending on $\BD$). 

\smallskip
This allows to assign, for $p \in \{2, 3\}$, similar properties to the following 
arithmetic $p$-invariants depending on values of $\BL_p$-functions:

\smallskip
({\bf a}) We will consider, at first (Section \ref{torsion}), the torsion group $\CT_{K, p}$ 
of the Galois group of the maximal abelian $p$-ramified (non-complexi\-fied)
pro-$p$-extension $H_{K, p}^\pr$ of $K$. 
In the real abelian case, $\CT_{K, p} = \Gal(H_{K, p}^\pr/K_\infty)$, where $K_\infty$ 
is the cyclotomic $\Z_p$-exten\-sion of $K$. 

\smallskip
Analytically (where $\sim$ means equality ``up to a $p$-adic unit factor''): 
$$\order \CT_{K, p} \sim [K \cap\,\Q_\infty : \Q] \times \prd_{\chi \ne 1} 
\hbox{$\frac{1}{2}$}\,\BL_p (1, \chi), $$ 
where $\chi$ runs trough the set of primitive Dirichlet's characters of $K$ 
\cite[Appendix]{C}, \cite{Se}, while class field theory elucidates it completely 
with Formula \eqref{T=HRW} \cite{Gra8}, which explains that $\CT_{K, p}$ 
plays a central role in this study. 

\smallskip
({\bf b}) Then we will focus (Section \ref{hilbert}) on the the Hilbert tame kernel $\BK_2(\BZ_K)$ 
of the ring of integers $\BZ_K$ of $K$ and, for $p=2$, we will replace it by the regular 
kernel $\BR_2(\BZ_K)$ in the ordinary sense, linked to the Hilbert one via 
the exact sequence \cite{Gar}:
\begin{equation}\label{regular}
1 \to \BR_2(\BZ_K) \too \BK_2(\BZ_K) \too (\Z/2\Z)^{[K\,:\,\Q]} \to 1.
\end{equation}

The class field theory information about the regular kernel comes in general
from reflection theorems giving formulas \eqref{rankR} and \eqref{rankT}; we have 
refrained from citing all the articles re-proving well-known reflection theorems between
$\BK_2(\BZ_K)[p^\infty]$, $\CT_{K,p}$, $\CH_{K,p}$, published in the 1980--90's 
by several authors.

\smallskip
({\bf c}) Then $\order \BK_2(\BZ_K)$, being in relation with the complex $\BL(-1, \chi)$'s, 
we use (Section \ref{comparison}), when it is possible, the classical link between 
$p$-adic and complex $\BL$-functions for some comparison of the two $p$-adic 
invariants $\BK_2(\BZ_K)[p^\infty]$ and $\CT_{K, p}$\,\footnote{\,A $p$-invariant 
may be denoted $\BX_K[p^\infty]$ when it is of the form $\BX_K \otimes \Z_p$, for a 
{\it finite global invariant $\BX_K$}. Otherwise it is denoted $\CX_{K, p}$;
it is only conjectural \cite[Section 8]{Gr55} that $\CT_{K, p}$ can be written 
$\BT_K \otimes \Z_p$; this conjecture is out of reach but may be coherent 
with the finiteness of $\BH_K$ (class group) and $\BK_2(\BZ_K)$, 
and is supported by many heuristics. For convenience, we 
will also use the notation $\CX_{K, p}$ when $\BX_K$ does exist (e.g., 
$\CH_{K, p} = \BH_K[p^\infty]$).}, 
with the use of property \eqref{either/or} when it applies, thus giving new informations.

\subsection{Torsion group of abelian \texorpdfstring{$p$}{Lg}-ramification}
\label{torsion}
Class field theory is especially convenient to state the arithmetic properties of 
$\CT_{K, p}$ (e.g., \cite[\S\,III.2.c, \S\,III.4.b, \S\,IV.3]{Gr5}, after some pioneering 
works \cite{BP, Gr3, Jau1, Ng1, GJ} and many others). 

\smallskip
Numerical aspects are given in \cite{Gr6} with PARI \cite{PARI} programs and 
the order of magnitude of $\order \CT_{K, p}$ is studied in \cite{Gra9}. 

\smallskip
We refer to Appendix \ref{A1} for the program computing the structure of $\CT_{K, p}$ 
in complete generality.
The following diagram is valid for any number field $K$ fulfilling Leopoldt's conjecture, 
replacing $K_\infty$ by the compositum $\wt K$ of the $\Z_p$-extensions of $K$:
\unitlength=0.98cm 
\[\vbox{\hbox{\hspace{-2.8cm} 
 \begin{picture}(11.5, 5.4)
\put(6.1, 4.50){\line(1, 0){1.6}}
\put(8.4, 4.50){\line(1, 0){2.3}}
\put(3.8, 4.50){\line(1, 0){1.2}}
\put(9.1, 4.2){\ft$\simeq {\mathcal W}_{K, p}$}
\put(4.2, 2.50){\line(1, 0){1.25}}
\bezier{350}(3.8, 4.8)(7.6, 5.5)(11.0, 4.8)
\put(7.2, 5.25){\ft$\CT_{K, p}$}
\bezier{350}(3.7, 4.2)(5.85, 3.4)(8.0, 4.2)
\put(6.0, 3.5){\ft$\CT_{K, p}^\bp$}
\put(3.50, 2.9){\line(0, 1){1.25}}
\put(3.50, 0.9){\line(0, 1){1.25}}
\put(5.7, 2.9){\line(0, 1){1.25}}
\bezier{300}(3.9, 0.5)(4.7, 0.5)(5.6, 2.3)
\put(5.2, 1.3){\ft$\simeq \CH_{K, p}$} 
\bezier{300}(6.0, 2.5)(8.5, 2.6)(10.8, 4.3)
\put(8.2, 2.7){\ft$\simeq  \CU_{K, p}/\CE_{K, p}$}
\put(10.85, 4.4){\ft$H_{K, p}^\pr$}
\put(5.2, 4.4){\ft$\wt K H_{K, p}^\nr$}
\put(7.8, 4.4){\ft$H_{K, p}^\bp$}
\put(6.4, 4.2){\ft$\simeq {\mathcal R}_{K, p}$}
\put(3.4, 4.4){\ft$\wt K$}
\put(5.5, 2.4){\ft$H_{K, p}^\nr$}
\put(2.8, 2.4){\ft$H_{K, p}^\nr\! \cap\! \wt K$}
\put(3.4, 0.40){\ft$K$}
\end{picture}}}\]

Definitions and notations are the following:

\smallskip
(i) $\CH_{K, p}$ is the $p$-class group and $H_{K, p}^\nr$ the $p$-Hilbert class field;

\smallskip
(ii) $\CR_{K, p}$ is the {\it normalized} $p$-adic regulator, defined as a $\Z_p$-module 
in \cite[Section 5, Proposition 5.2]{Gra8}; its order is the classical $p$-adic regulator, 
up to an explicit factor;

\smallskip
(iii) $\CU_{K, p}$ is the group of principal local units at $p$, $\CE_{K, p}$ the closure in $\CU_{K, p}$ 
of the group of units $E_K$ and $\CW_{K, p} := {\rm tor}_{\Z_p}(\CU_{K, p})/\mu_p(K)$;

\smallskip
(iv) $H_{K, p}^\bp$ (resp. $\CT_{K, p}^\bp$), named in \cite{Ng1} the Bertrandias--Payan 
field (resp. module), is issued from \cite{BP}; $H_{K, p}^\bp$ is the compositum of the 
$p$-cyclic extensions of $K$ embeddable in $p$-cyclic extensions of arbitrary large 
degree; see \cite{GJN} for complements. Whence:
\begin{equation}\label{T=HRW}
\order \CT_{K, p} = \ffrac{\order \CH_{K, p}}{[H_{K, p}^\nr \cap \wt K : K]} 
\times \order \CR_{K, p} \times \order \CW_{K, p}.
\end{equation}

In the real case, $K_\infty/K$ is in general totally ramified, 
so that the formula becomes $\order \CT_{K, p} = \order \CH_{K, p} \times
\order \CR_{K, p} \times \order \CW_{K, p}$.

\begin{remark}
From \cite[Proposition 5.2]{Gra8}, we can give the following examples:

\smallskip
(i) For $p=2$ and $K = \Q(\sqrt m)$ ($m>0$ square-free), this formula 
becomes $\order \CT_{K, 2} \sim h_K \times \ffrac{\log_2(\varepsilon_K)}{2 \sqrt m}$, 
where $h_K$ is the class number and $\varepsilon_K$ the fundamental unit.

\smallskip
(ii) For $p=3$ and a cyclic cubic field $K$, then $\order \CT_{K, 3} \sim h_K \times 
\ffrac{R_{K, 3}}{3^u}$ with $u=2$ (resp. $u=1$) if $3$ is unramified (resp. ramified) 
and where $R_{K, 3}$ is the usual $3$-adic regulator
\end{remark}

\subsection{Tame and regular Hilbert's kernels}\label{hilbert}

Let $K$ be any number field, let $p \geq 2$ be a prime number and let $S_p(K)$ be the set of 
$p$-places of $K$; put $K' := K(\mu_p)$ and let $S_p(K')$ be the set of $p$-places of $K'$. 

\smallskip
Recall that $\omega : \Gal(\Q(\mu_p)/\Q) \to \mu_{p-1}$, called the cyclotomic (or Teichm\"uller) 
character, is defined by the Galois action on $\mu_p$ (conductor $p$, order $p-1$ for 
$p \ne 2$); it is the unit character $\1$ for $p=2$ since $\mu_2 \subset \Q$. For $p \ne 2$, 
we have in some sense $\omega = \theta$ (Notations \ref{notations}\,(i)), but for $p=2$, 
do not confuse $\omega$ with $\theta$ as cyclotomic character of $\Gal(\Q(\mu_4)/\Q)$ 
(conductor $4$, order$2$).

\subsubsection{First reflection theorem}
The regular kernel $\BR_2(\BZ_K)$ (see \eqref{regular}) fulfills, from Tate's results \cite{Ta1, Ta2} 
and reflection theorem \cite[Th\'eor\`eme 11.1]{Gr4} or \cite[Theorem II.7.7.3.1]{Gr5}, 
the following $p$-rank formula, where $\BH_{K'}^{S_p(K') {\rm res}}$ is the 
$S_p(K')$-class group in the restricted sense, $\delta =1$ or$0$, according as 
$\mu_p \subset K$ or not:
\begin{equation}\label{rankR}
\left\{ \begin{aligned}
&\rk_p(\BR_2(\BZ_K))  =  \rk_p \big((\BH_{K'}^{S_p(K') {\rm res}})_{\omega^{-1}} \big) \\
& \hspace{1cm}+\order \{v \in S_p(K),\ \hbox{$v$ totally split in $K'/K$} \}-\delta, \\
& \rk_2(\BR_2(\BZ_K)) =  \rk_2 \big(\BH_{K}^{S_2 {\rm res}} \big) 
+\order S_2-1.
\end{aligned}\right.
\end{equation}

\subsubsection{Second reflection theorem}
Due to the reflection theorem between $p$-class groups and $p$-torsion groups
of abelian $p$-ramification theory, we get similarly \cite[Proposition III.4.2.2]{Gr5}:
\begin{equation}\label{rankT}
\left\{\begin{aligned}
& \rk_p(\CT_{K, p}) = \rk_p \big((\BH_{K'}^{S_p(K') {\rm res}})_\omega \big) \\
&  \hspace{1cm}+\order \{v \in S_p(K),\,\hbox{$v$ totally split in $K'/K$} \}-\delta \\
& \rk_2(\CT_{K, 2}) =  \rk_2 \big(\BH_{K}^{S_2 {\rm res}} \big) 
+\order S_2-1.
\end{aligned}\right.
\end{equation}

From the two previous relations, we get:
\begin{equation}\label{equalranks}
\rk_p (\BR_2(\BZ_K)) = \rk_p (\CT_{K, p}), 
\end{equation}
as soon as $\omega^2=1$, that is to say, if $K$ contains the maximal real 
subfield of $\Q(\mu_p)$, a framework widely developed in \cite{GJ};
these techniques yielding generalizations as that of Keune \cite{K}.

\smallskip
The relation was proven in \cite[Theorems 1, 2]{Gr3} (in which $\BR_2(\BZ_K)$ 
was denoted $H_2^0K$), to characterize, for $p \in \{2, 3\}$, the abelian $p$-extensions 
$K/\Q$ such that $\BR_2(\BZ_K)[p^\infty] = 1$ ($p$-regular fields studied in
\cite{GJ, BGr, RO}).

\smallskip
We note that $\Q(\mu_p)^+ \subseteq K$ is always fulfilled for $p \in \{2, 3\}$.
More generally, \cite[Remarque 11.5]{Gr4} gives, when $K$ contains $\mu_p$, 
the corresponding relation \eqref {equalranks} with characters: 
$$\rk_\chi (\BR_2(\BZ_K)) = \rk_{\omega^2 \chi^{-1}} (\CT_{K, p}). $$

Class field theory approach of the structures of $\BK_2(\BZ_K)[p^\infty]$ and
$\CT_{K, p}$ has given a huge literature, especially for rank 
computations, often restricted to $p \in \{2, 3\}$, probably with  
\eqref{equalranks} when $\omega^2 = \1$, to get practical results (e.g., 
\cite{BS, Gr3, K, Ber, Br1, Q1, YF, KM, Br2, Yue2, Q2, DL}).

\smallskip
For results in the case $p \geq 5$, see Section  \ref{p>3}.

\subsection{Arithmetic v.s. analytic properties}\label{comparison}

To complete these class field theory aspects, we will use the following 
($p$-adic and complex) analytic formulas and the correspondence, for 
$p \in \{2, 3\}$, between $p$-adic and complex $\BL$-functions
of characters $\chi$ of $K$:
\begin{equation}\label{analytic}
\left\{\begin{aligned}
\order \CT_{K, p} & \sim [K \cap\,\Q_\infty : \Q] \times \prd_{\chi \ne 1} 
\ffrac{1}{2}\,\BL_p (1, \chi), \\
\order \BK_2(\BZ_K) & = 2^{[K : \Q]}\times \Big( \ffrac{1}{6}  \prd_{[F:K]=2} 
\ffrac{\order \mu(F)}{2}   \Big) \times \prd_{\chi \ne 1}\ffrac{1}{2}\,\BL (-1, \chi), 
\end{aligned}\right.
\end{equation}

\noindent
where $\mu(F)$ is the group of roots of unity of the field $F$. 

\smallskip
For more history and theoretical contributions, one may refer to 
\cite{Ba, Ta1, Gar, Ta2, Jau1, Ng1, Hur, GJ, K, Ko2, Ng, 
HK, RW, Ko1, Yue1, Gr5, Q3}, among many others, dealing with the more 
concrete results we have cited previously; for some generalizations to the 
wide \'etale kernels ${\bf WK}_{2i}(K)$, see for instance \cite{Ng2, JS, JM, AAM}.

\smallskip
But our goal is to return to very simple effective $p$-adic methods allowing 
computations, by means of pseudo-measures. We intend to provide, simultaneously,
survey and history parts about these various questions.

\smallskip
These pseudo-measures are the Mellin transform (via a reflection principle) of 
Stickelberger elements, and give rise, after twisting, to $p$-adic measures of the 
form $\big(\CS^*_{\!L_n}(c)(s) \big)_n$, $s \in \Z_p$, in the group algebras 
$\Z_p/qp^n \Z_p[\Gal(L_n/\Q)]$ of the layers $L_n := K(\mu_{qp^n})$ of the cyclotomic 
$\Z_p$-extension of $K(\mu_q)$ ($q=4$ or $p$) \cite[Section II]{Gr1}.

\smallskip
This was done after the main pioneering works of Kubota--Leopoldt \cite{KL}, 
then Fresnel \cite{Fre}, Amice--Fresnel \cite{AF}, Iwasawa \cite{Iw}, Coates \cite{C}, 
Serre \cite{Se}, giving many results, as the value of the residue of Dedekind
$p$-adic $\zeta$-functions at $s=1$ and some annihilation theorems for $p$-class 
groups, torsion groups $\CT_{K, p}$ and similar invariants \cite{Gr1, Gr2, Jau2, BN, 
Jau3, Jau4}; this was considerably generalized to a totally real base field by
Colmez in \cite{Col} and Deligne--Ribet in \cite{DR}. 

\smallskip
The Iwasawa framework of these constructions is very similar and
is detailed in \cite[\S\,7.2]{W}.

\subsection{Main results of the article}\label{main}
We describe three results about the study and the comparison of the
modules $\BK_2(\BZ_K)[p^\infty]$ and $\CT_{K, p}$:

\medskip
{\bf (a)} From the properties of the constant $\BC$ described in \S\,\ref{method}
and stated in Theorem \ref{thmfond}, we obtain the following results for 
quadratic fields ($p=2$) and cyclic cubic ones ($p=3$):

\subsubsection*{\bf Theorem A} \label{A} (see Theorem \ref{thmfinal} for the proof).

\smallskip
(i) {\it Let $K = \Q(\sqrt m)$ be a real quadratic field of conductor $\ne 8$.}

\smallskip
{\it Assume that $v_2(\order \CT_{K, 2}) = \BC$
(equivalent to $v_2 \big(\frac{1}{2} \BL_2(1, \chi) \big) = \BC$ 
from formula \eqref{analytic}); then}
$\order \BK_2(\BZ_K)[2^\infty] = 2^{\hbox{\tiny $\BC$}+2}$.

\smallskip
(ii)  {\it Let $K$ be a cyclic cubic field of conductor $\ne 9$.}

 {\it Assume that $v_3(\order \CT_{K, 3}) = \BC$
(so, $v_3 \big(\BL_3(1, \chi)  \times\BL_3(1, \chi^2) \big) = \BC$);
then} $\order \BK_2(\BZ_K)[3^\infty] = 3^{\hbox{\tiny $\BC$}}$.

\smallskip
(iii)  {\it If $v_2 (\CT_{K, 2} ) > \BC$  (resp. $v_3 (\CT_{K, 3} ) > \BC$), }
$\order \BK_2(\BZ_K)[2^\infty] > 2^{\hbox{\tiny $\BC$}+2}$  {\it (resp.}
$\order \BK_2(\BZ_K)[3^\infty] > 3^{\hbox{\tiny $\BC$}}$).

\medskip
We compute, in Appendix \ref{B1}, $\order \BR_2(\BZ_K) [2^\infty]$ for quadratic
fields by means of explicit pseudo-measures yielding expressions of 
Theorem \ref{Lp} and Corollary \ref{s=-1}; this allows safe verifications.

\medskip
{\bf (b)} We apply the case of equality to a family of real quadratic fields introduced 
in \cite[Theorem 1.2]{DL} and prove the Deng--Li conjecture on 
$\BK_2(\BZ_K)[2^\infty]$ in the following Theorem\,{\bf B} (Section \ref{DengLi}):

\subsubsection*{\bf Theorem B}\label{B} {\it
Let $K = \Q(\sqrt m)$, $m= \ell_1 \ell_2 \cdots \ell_n$, $n$ even, with:

\smallskip
$\bullet$ $\ell_1 \equiv 3 \pmod 8$,\ \ $\ell_i \equiv 5 \pmod 8$ for $i \geq 2$, 

\smallskip
$\bullet$  $\big( \frac{\ell_1}{\ell_2} \big) = -1$,  $\big( \frac{\ell_1}{\ell_j} \big) = 1$ for $j \geq 3$, 
\ \ $\big( \frac{\ell_i}{\ell_j} \big) = -1$ for $2 \leq i < j \leq n$.

\smallskip
Then $\CH_{K, 2} \simeq (\Z/2\Z)^{n-1}$, $\CT_{K, 2} \simeq (\Z/2\Z)^{n-2} \times \Z/4\Z$, and:}
$$\BK_2(\BZ_K)[2^\infty] \simeq (\Z/2\Z)^{n-1} \times \Z/2^3 \Z\ \hbox{(Deng--Li conjecture).}$$

\medskip
{\bf (c)} The cases $p \in \{2, 3\}$ being very specific (thus well studied in the literature) 
because of the relation \eqref{flp} for $m=2$ between $p$-adic and complex 
$\BL$-functions, we have obtained the following Theorem\,{\bf C} when $p$ is 
distinct from $2$ and $3$ (see Theorem \ref{pdiv} for a much more general 
statement over a real base field $k$ of prime-to-$p$ degree):

\subsubsection*{\bf Theorem C} \label{C}
{\it Let $K/\Q$ be a cyclic $p$-extension for $p \geq 5$ and let $S_\ta$ be the 
set of primes $\ell \ne p$ ramified in $K/\Q$. 

\smallskip\noindent
Then $\rk_p(\BK_2(\BZ_K)) \geq 
\order S_\ta$. In particular, $\BK_2(\BZ_K)[p^\infty] = 1$ if and only if
$K$ is contained in the cyclotomic $\Z_p$-extension $\Q_\infty$ of $\Q$.}

\section{Computation of \texorpdfstring{$\order \BK_2 (\BZ_K)[p^\infty]$, 
$p \in \{2, 3\}$}{Lg}}

\subsection{Birch-Tate formula and \texorpdfstring{$\BL_p$}{Lg}-functions}
Let $\zeta_K(s)$ be the De\-dekind zeta-function of the real abelian number field $K$. 

\smallskip
Then $\zeta_K(s) = \prd_{\chi} \BL(s, \chi)$, as product of the complex $\BL$-functions, 
where $\chi$ runs trough the set of primitive Dirichlet's characters of $K$, for 
which $\BL(s, \1) = \zeta_\Q^{}(s)$ for the unit character.
In the $p$-adic context, definition of $\BL_p$-functions is as follows
(e.g., \cite[Section 5\,(a), Remarque]{Fre}, Amice--Fresnel \cite[Section 1]{AF}):
\begin{equation}\label{flp}
\left\{\begin{aligned}
& \BL_p(1-m, \chi) = (1- p^{m-1}\,\chi(p))\,\BL(1-m, \chi), 
\hbox{\ for:} \\
&\hspace{0.6cm} m > 1\  \&\ \,m \equiv 0 \pmod {(p-1)}\ \hbox{if $p > 2$}, \\
&\hspace{0.6cm} m > 1\  \&\ \,m \equiv 0 \pmod 2\ \hbox{if $p = 2$}. 
\end{aligned}\right .
\end{equation}

This allows to compute for instance $\BL(-1, \chi)$ in terms of  
$\BL_p$-func\-tions for $p=2$ and $p = 3$ since $m=2$ fulfills the congruent
conditions required in \eqref{flp}.

\smallskip
The Birch-Tate formula is the following equality (proved as consequence
of the Mazur--Wiles ``Main Theorem'' in abelian theory and complements
in the case $p=2$ \cite{Ko2}):
\begin{equation}\label{BT}
\left\{\begin{aligned}
\order \BK_2 (\BZ_K) & = w_2(K)\,\zeta_K(-1) = w_2(K)\,\zeta_\Q(-1)\!\!
\prd_{\chi \ne \1} \BL(-1, \chi), \\
w_2(K) & = 4 \cdot \prd_{[F:K]=2} \ffrac{\order \mu(F)}{2}\,\ \ 
\hbox{(see \eqref{analytic})}.
\end{aligned}\right .
\end{equation}

For instance,  $w_2(\Q) = 24$  and $\zeta_\Q^{}(-1) = \ffrac{1}{12}$; then
$\order \BK_2 (\Z) = 2$, giving $\order \BR_2 (\Z) = 1$ as expected since 
$\CT_{\Q, p} = 1$ for all $p$.

\smallskip
Thus, the computation of  the $p$-Sylow subgroup
$\order \BK_2(\BZ_K) [p^\infty]$ of $\BK_2 (\BZ_K)$ is possible for $p=2$ 
and $p=3$ with the following formulas (using $\zeta_\Q^{}(-1) = \ffrac{1}{12}$ 
and \eqref{flp} for $m=2$):
\begin{equation*}
\left\{\begin{aligned}
\order \BK_2 (\BZ_K)[2^\infty] & \sim \ffrac{w_2(K)}{12} \times\!\!
\prd_{\chi \ne \1} \ffrac{1}{1-2 \chi(2)}\,\BL_2(-1, \chi), \\
\order \BK_2 (\BZ_K) [3^\infty] & \sim \ffrac{w_2(K)}{12} \times\!\!
\prd_{\chi \ne \1} \ffrac{1}{1-3 \chi(3)}\,\BL_3(-1, \chi), 
\end{aligned}\right .
\end{equation*}
where $\sim$ means equality up to a $p$-adic unit factor for the considered$p$. 
Since the denominators $1-p\,\chi(p)$ are invertible in $\Z_p$, the formulas 
become with expression \eqref{BT} of $w_2(K)$:
\begin{equation*}
\left\{\begin{aligned}
\order \BK_2 (\BZ_K) [2^\infty] & \sim \prd_{[F:K]=2} \ffrac{ \order \mu_2(F) }{2} 
\times\!\! \prd_{\chi \ne \1}\,\BL_2(-1, \chi), \\
\order \BK_2(\BZ_K) [3^\infty] & \sim \ffrac{1}{3} \prd_{[F:K]=2} \order \mu_3(F)
\times\!\! \prd_{\chi \ne \1}\,\BL_3(-1, \chi).
\end{aligned}\right .
\end{equation*}

\subsection{Formulas for quadratic and cubic fields}\label{formulas}
We deduce the following expressions with $p \in \{2, 3\}$;
due to exact sequence \eqref{regular} when $p=2$, we do not write the 
corresponding results for $\BR_2(\BZ_K)$.

\smallskip
({\bf a}) Real quadratic fields.

\smallskip
\quad (i) For real quadratic fields $K = \Q(\sqrt m)$, $m \ne 2, 3$, $p=2$: 
\begin{equation*}
\left\{\begin{aligned}
\prd_{[F:K] = 2} \ffrac{1}{2} \cdot \order \mu_2(F) & = 2
\hbox{ ($F=K(\mu_4)$, $F=K(\mu_6)$)}; \\
\order \BK_2(\BZ_K) [2^\infty] & \sim 2 \cdot  \BL_2(-1, \chi).
\end{aligned}\right .
\end{equation*}

\quad (ii) For $\Q(\sqrt 2)$, $\prd_{[F:K]=2} \ffrac{\order \mu_2(F)}{2}  = 4$
($F=K(\mu_8)$, $F=K(\mu_6)$); so, 
$\order \BK_2\BZ_{\Q(\sqrt 2)} [2^\infty] \sim 4 \cdot \BL_2(-1, \chi) \sim 4$.

\smallskip\smallskip
\quad (iii) For $\Q(\sqrt 3)$, $\prd_{[F:K]=2} \ffrac{\order \mu_2(F)}{2}  = 4$ 
($F=K(\mu_4)$, $F=K(\mu_6) = K(\mu_4)$); so, 
$\order \BK_2\BZ_{\Q(\sqrt 3)} [2^\infty] \sim 4 \cdot \BL_2(-1, \chi) \sim 8$.

\smallskip
\quad (iv) For $p=3$, $K \ne \Q(\sqrt 3)$, the formulas are:
\begin{equation*}
\left\{\begin{aligned}
\ffrac{1}{3}  \prd_{[F:K] = 2}  \order \mu_3(F) & = 1 \hbox{ ($F=K(\mu_6)$)}; \\
\order \BK_2(\BZ_K) [3^\infty] & \sim \BL_3(-1, \chi). 
\end{aligned}\right .
\end{equation*}

\smallskip
\quad (v) For $\Q(\sqrt 3)$, $\ffrac{1}{3} \prd_{[F:K]=2} \order \mu_3(F) = 3$ 
($F=K(\mu_3)$, $F=K(\mu_4) = K(\mu_6)$); so, 
$\order \BK_2\BZ_{\Q(\sqrt 3)} [3^\infty] \sim 3 \cdot \BL_3(-1, \chi) \sim 3$.

\medskip
({\bf b}) Cyclic cubic fields.

\smallskip
\quad (i) For cubic fields $K$ of conductor distinct from $9$, $p=3$:
\begin{equation*}
\left\{\begin{aligned}
\ffrac{1}{3} \prd_{[F:K]=2} \order \mu_3(F) & = 1 
\hbox{ ($F=K(\mu_6)$)}; \\
\order \BK_2 (\BZ_K) [3^\infty] & \sim \BL_3(-1, \chi) \times \BL_3(-1, \chi^2), 
\end{aligned}\right .
\end{equation*}
for the two conjugate characters of order $3$ of $K$.

\smallskip
\quad (ii) For the cubic field of conductor $9$, 
$\ffrac{1}{3} \prd_{[F:K]=2} \order \mu_3(F) = 3$ ($F=K(\mu_9)$);
then $\order \BK_2(\BZ_K) [3^\infty] = 3 \cdot \BL_2(-1, \chi) \sim 1$.

\smallskip
\quad (iii)  For $p=2$, the formulas are, for the two conjugate characters of order 
$3$ of $K$:
\begin{equation*}
\left\{\begin{aligned}
\prd_{[F:K]=2} \ffrac{1}{2}\cdot \order \mu_2(F) & = 2
\hbox{ ($F=K(\mu_4)$, $F=K(\mu_6)$)}; \\
\order \BK_2 (\BZ_K) [2^\infty] & \sim 2 \cdot \BL_2(-1, \chi) \times \BL_2(-1, \chi^2).
\end{aligned}\right .
\end{equation*}

\section{Definition of a \texorpdfstring{$p$}{Lg}-adic pseudo-measure
 \texorpdfstring{$(\CS_{\!L_n})_n$}{Lg}}
Let $K$ be a real abelian field and put $L_n := K(\mu_{qp^n})$, $q \in \{p, 4\}$, 
as usual, and $n \geq 0$.

\subsection{The Stickelberger elements}
The conductor of $L_n$ is of the form $f_{L_n} = q p^n f$, for a prime-to-$p$
integer $f$, taking $n$ large enough if $p$ ramifies in $K$ (otherwise, $f$ is 
the conductor of $K$) (in formulas we shall abbreviate $f_{L_n}$ by $f_n$). 

\smallskip
Then put, where all Artin symbols are taken over $\Q$:

\centerline{$\CS_{\!L_n} := -\sm_{a=1}^{f_n} \Big(\ffrac{a}{f_n}-\ffrac{1}{2} \Big)
\,\Big(\ffrac{L_n}{a} \Big)^{-1}$, }

\noindent
as restriction to $L_n$ of $\CS_{\Q(\mu_{f_n}^{})}$, 
where $a$ runs trough the prime-to-$f_n$ integers $a \in [1, f_n]$.

\subsection{Norms of the Stickelberger elements}
Let $f$ and $m$ be such that $m \mid f$; consider $\Q(\mu_f^{})$ and $\Q(\mu_m)$
and let $\Norm_{\Q(\mu_f^{})/\Q(\mu_m)}$ be the restriction map
$\Q[\Gal(\Q(\mu_f^{}) / \Q)] \to \Q[\Gal(\Q(\mu_m)/\Q)]$.
We have: 
\begin{equation}\label{norms}
\Norm_{\Q(\mu_f^{})/\Q(\mu_m)} (\CS_{\Q(\mu_f^{})}) = 
\prd_{\ell \mid f,\,\ell \nmid m} \!\! \Big(1-\Big(\ffrac{\Q(\mu_m)}{\ell} \Big)^{-1}\Big) 
\cdot \CS_{\Q(\mu_m)}. 
\end{equation}

Let $L/K$ be an extension of abelian fields of conductors $m$ and $f$;
we define $\CS_L:= \Norm_{\Q(\mu_f^{})/L}(\CS_{\Q(\mu_f^{})})$ and 
$\CS_K:= \Norm_{\Q(\mu_m^{})/K}(\CS_{\Q(\mu_m^{})})$, respectively; 
then, $\Norm_{L/K}(\CS_{\!L})  = \prd_{\ell \mid f,\  \ell \nmid m}\!\!
\Big(1-\Big(\ffrac{K}{\ell} \Big)^{-1}\Big) \cdot \CS_K$. This implies 
$\Norm_{L/K}(\CS_L) = 0$ as soon as a prime $\ell \mid f$ totally splits in $K$.

\subsection{Twists of the Stickelberger elements}
Let $c$ be an integer prime to $2 p f$; for $L_n = K(\mu_{qp^n}^{})$, put:
$\CS_{\!L_n}(c) := \Big(1-c\Big(\ffrac{L_n}{c} \Big)^{-1} \Big)\cdot \CS_{\!L_n}$.
Then $\CS_{\!L_n}(c) \in \Z[\Gal(L_n/\Q)]$. Indeed, we have:
$$\CS_{\!L_n} (c)=\ffrac{-1}{f_n} \sm_a 
\Big[a \Big(\ffrac{L_n}{a} \Big)^{-1}-a c \Big(\ffrac{L_n}{a} \Big)^{-1}\Big(\ffrac{L_n}{c} \Big)^{-1}\Big]
+ \ffrac{1-c}{2} \sm_a \Big(\ffrac{L_n}{a} \Big)^{-1}; $$
let $a'_{c} \in [1, f_n]$ be the unique integer such that 
$a'_{c} \cdot c \equiv a \pmod {f_n}$;
put $a'_{c} \cdot  c-a = \lambda^n_a(c) f_n$, 
$\lambda^n_a(c) \in \Z$; then, using the bijection 
$a \mapsto a'_{c}$ in the second summation and the fact that
$\Big(\ffrac{L_n}{a'_{c}} \Big) \Big(\ffrac{L_n}{c} \Big) =  \Big(\ffrac{L_n}{a} \Big)$:
\begin{equation}\label{twist}
\left\{\begin{aligned}
 \CS_{\!L_n}(c) & = \ffrac{-1}{f_n}  \Big[ \sm_a a \Big(\ffrac{L_n}{a} \Big)^{-1}\!\!-
\sm_a a'_{c}  c \Big(\ffrac{L_n}{a'_{c}} \Big)^{-1} \! \Big(\ffrac{L_n}{c} \Big)^{-1}\Big] \\
&\hspace{4.1cm}+\ffrac{1-c}{2} \sm_a \Big(\ffrac{L_n}{a} \Big)^{-1} \\
& =  \sm_a \Big( \lambda^n_a(c)+\ffrac{1-c}{2} \Big) \Big(\ffrac{L_n}{a} \Big)^{-1}
\!\! \in \Z[\Gal(L_n/\Q)].
\end{aligned}\right.
\end{equation}

This twisted form gives the Stickelberger $p$-adic measure used to generate the  
$\BL_p$-functions of $K$ as explained in the next Section.

\section{The measure \texorpdfstring{$(\CS^*_{\!L_n}(c))_n$}{Lg}
defining \texorpdfstring{$\BL_p(s, \chi)$}{Lg}} 

Consider the algebras $A_n := \Z_p / qp^n\Z_p[\Gal(L_n/\Q)],\ n \geq 0$. 
The Mellin transform (e.g., \cite[\S\,II.1]{Gr1}) is defined, on $A_n$, 
by the following image of any $\sigma \in G$, where $a$, 
defined modulo $f_n = qp^n f$, represents $\sigma$ as Artin symbol:
\begin{equation*}
\left \{ \begin{aligned}
\sigma & = \Big(\ffrac{L_n}{a} \Big) \mapsto
\theta(a) \langle a \rangle^s \Big(\ffrac{L_n}{a} \Big)^{-1} 
= a \langle a \rangle^{s-1} \Big(\ffrac{L_n}{a} \Big)^{-1} \\
& = a \langle a \rangle^{s-1} \sigma^{-1} \pmod {qp^n f},\ \,s \in \Z_p, 
\end{aligned} \right.
\end{equation*}
and the image, in $A_n$, of the expression \eqref{twist} of $\CS_{\!L_n}(c)$
by this transform
yields the $p$-adic measure in $\ds \varprojlim_n A_n$ giving annihilation 
theorems for real invariants and $p$-adic functions $\BL_p(s, \chi)$ for $n \to \infty$;
indeed, since $L_n = K(\mu_{qp^n})$, one gets, from the norm relations \eqref{norms}, 
$$\Norm_{L_{n+1}/L_n} (\CS^*_{\!L_{n+1}}(c)(s)) \equiv \CS^*_{\!L_n}(c)(s)
\pmod{qp^n},\ \,n \geq 0. $$
So, we will use the following approximations: 
\begin{equation*}
\CS^*_{\!L_n}(c)(s) \equiv \sm_{a=1}^{f_n} \Big(\lambda^n_a(c) 
+ \ffrac{1-c}{2} \Big)\,a^{-1} \langle a \rangle^{1-s} \Big(\ffrac{L_n}{a} \Big)\!\!
\pmod{qp^n}.
\end{equation*}

\subsection{Approximations modulo \texorpdfstring{$q p^n$}{Lg}
of \texorpdfstring{$\BL_p(s, \chi)$}{Lg}}

The $\BL_p$-functions, of even Dirichlet's characters, are obtained by 
means of the Mellin transform of the previous twisted pseudo-measures, 
so that the Mellin transform of $\Big(1-c\Big(\ffrac{L_n}{c} \Big)^{-1} \Big)$ is 
$\Big(1- \langle c \rangle^{1-s}  \Big(\ffrac{L_n}{c} \Big)\Big)$ and is a
factor of the $p$-adic measure $\big(\CS^*_{\!L_n}(c)(s) \big)_n$ that must be dropped
when this makes sense (thus in any case, except $\chi = \1$ and $s=1$):

\begin{theorem}\label{Lp}
 Let $c$ be any odd integer, prime to $p$ and to the conductor $f$
of $K$. For all $n$ large enough, let $f_n$ be the conductor of $L_n = K(\mu_{qp^n})$, 
and for all $a \in [1, f_n]$, prime to $f_n$, let $a'_{c}$ be the unique integer in $[1, f_n]$ 
such that $a'_{c} \cdot  c-a = \lambda^n_a(c) f_n$, $\lambda^n_a(c) \in \Z$.
The twisted $p$-adic measure generating the $\BL_p$-functions is given by
the restriction $\CS^*_{K, n}(c)(s)$ of $\CS^*_{\!L_n}(c)(s)$ to $K$, giving, for
all $n \geq 0$:

\centerline{$\CS^*_{K, n}(c)(s) \equiv  \sm_{a=1}^{f_n} \Big[ \lambda^n_a(c)+\ffrac{1-c}{2}\Big]
\,a^{-1}\langle a \rangle^{1-s} \Big(\ffrac{K}{a} \Big) \pmod{qp^n}$.} 

\smallskip\noindent
Whence the $\BL_p$-functions of Dirichlet's character $\chi$ of $K$:

\centerline{$\BL_p(s, \chi) = \ffrac{1}{1-\chi(c) \langle c \rangle^{1-s}} \times {\ds \lim_{n \to \infty}} 
 \sm_{a = 1}^{f_n} \big [\lambda_a^n(c) +\ffrac{1-c}{2} \big]\,a^{-1} \langle a \rangle^{1-s} \chi(a)$.} 
\end{theorem}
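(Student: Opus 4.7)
\emph{Proof plan.} The strategy is to apply the Mellin transform term-by-term to the explicit expression \eqref{twist} of $\CS_{L_n}(c)$, reduce modulo $qp^n$, restrict to $K$, and then identify the resulting limit, after removing the twist factor via a character, with the Kubota--Leopoldt function $\BL_p(s,\chi)$.

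\emph{Step 1 (the Mellin image).} Starting from the identity $\CS_{L_n}(c)=\sum_{a=1}^{f_n}(\lambda_a^n(c)+\frac{1-c}{2})(\frac{L_n}{a})^{-1}$ obtained in \eqref{twist}, I would apply the Mellin transform $\sigma \mapsto a\langle a\rangle^{s-1}\sigma^{-1}$ to each basis element $\sigma_a^{-1}=\sigma_{b_a}$, where $b_a\in[1,f_n]$ represents $a^{-1}\pmod{f_n}$. The image is $b_a\langle b_a\rangle^{s-1}\sigma_a$; since $qp^n\mid f_n$, the congruence $ab_a\equiv 1\pmod{f_n}$ forces $b_a\equiv a^{-1}$ and $\langle b_a\rangle\equiv\langle a\rangle^{-1}\pmod{qp^n}$, so $b_a\langle b_a\rangle^{s-1}\equiv a^{-1}\langle a\rangle^{1-s}\pmod{qp^n}$. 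The restriction $\sigma_a=(\frac{L_n}{a})\mapsto (\frac{K}{a})$ then yields exactly the asserted congruence for $\CS^*_{K,n}(c)(s)$.

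\emph{Step 2 (measure and twist factor).} The norm-coherence $\Norm_{L_{n+1}/L_n}(\CS^*_{L_{n+1}}(c)(s))\equiv\CS^*_{L_n}(c)(s)\pmod{qp^n}$, stated just before the theorem (and derived from the Stickelberger norm relation \eqref{norms}, which is essentially trivial in this tower since no new tame primes appear), ensures that $(\CS^*_{K,n}(c)(s))_n$ is a genuine $p$-adic measure. Applying a primitive Dirichlet character $\chi$ of $K$, the Mellin image of the twist $(1-c\sigma_c^{-1})$ becomes $(1-\chi(c)\langle c\rangle^{1-s})$, which is a $p$-adic unit on $\Z_p$ outside of the pole of $\BL_p$ at $s=1$ for $\chi=\1$; dividing by it removes the artificial zero introduced by the twist and produces a continuous $p$-adic function of $s\in\Z_p$.

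\emph{Step 3 (identification with $\BL_p$).} To identify this quotient with $\BL_p(s,\chi)$, I would check its interpolation values at $s=1-m$ for integers $m\geq 1$ satisfying the congruence in \eqref{flp}: the finite sum $\sum_a[\lambda_a^n(c)+\frac{1-c}{2}]a^{-1}\langle a\rangle^{m}\chi(a)$ can be reorganized via the classical expression $L(1-m,\chi)=-\frac{1}{mf}\sum_a\chi(a)B_m(a/f)$ of values at negative integers in terms of generalized Bernoulli numbers, together with the identity linking $\lambda_a^n(c)$ to fractional parts of $ac^{-1}/f_n$, to yield $(1-\chi(p)p^{m-1})L(1-m,\chi)$ in the limit; continuity on $\Z_p$ and density of such $m$ then force the equality with $\BL_p(s,\chi)$ for all $s\in\Z_p$. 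The main obstacle is this last step: tracing how the twisted Riemann sum collapses to generalized Bernoulli numbers at negative integers reproduces the measure-theoretic construction of Kubota--Leopoldt and requires Kummer-type congruences, whereas Steps 1 and 2 amount to bookkeeping with Artin symbols and reductions modulo $qp^n$.
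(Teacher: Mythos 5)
Your proposal follows essentially the same route as the paper, which presents Theorem \ref{Lp} as the direct outcome of the Mellin-transform computation on the twisted element \eqref{twist}, the norm coherence from \eqref{norms}, and the classical identification with the Kubota--Leopoldt interpolation at $s=1-m$ (your Step 3, which the paper delegates entirely to \cite{Fre,AF,C,Se,Gr1}). One small imprecision in Step 2: the factor $1-\chi(c)\langle c \rangle^{1-s}$ need not be a $p$-adic unit (cf.\ Corollary \ref{formulasbis}, where it is $\sim 2$ or $\sim 1-\zeta_3$); what the division actually requires is only that it be nonzero, which holds for all $s\in\Z_p$ once $c$ is chosen with $\chi(c)\ne 1$, and fails only at $s=1$ for $\chi=\1$.
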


This formula is used in Appendices \ref{B1}, \ref{B2}, \ref{C1}, \ref{C2}, \ref{D3}, 
for explicit numerical computations.

\begin{corollary}\label{s=-1}
Assume that $p \in \{2, 3\}$; in this context, $\theta^2 = \1$.
Since for $s=-1$, $a^{-1} \langle a \rangle^{1-s} = a^{-1} \langle a \rangle^2 = a$, 
we get:

\centerline{$\BL_p(-1, \chi) = \ffrac{1}{1-\chi(c) \langle c \rangle^2} \times {\ds \lim_{n \to \infty}} 
 \sm_{a = 1}^{f_n} \big [\lambda_a^n(c) +\ffrac{1-c}{2} \big]\,a \chi(a)$.}
\end{corollary}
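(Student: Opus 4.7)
The plan is to specialize Theorem \ref{Lp} at $s = -1$ and to carry out the arithmetic simplification of the factor $a^{-1} \langle a \rangle^{1-s}$ using the hypothesis $\theta^{2} = \1$.

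First I would verify that $\theta^{2} = \1$ in both cases. By Notations \ref{notations}\,(i), $\theta$ is the cyclotomic character of $\Q(\mu_q)$ with $q \in \{p, 4\}$; for $p = 3$ its order is $p-1 = 2$, and for $p = 2$ the extension $\Q(\mu_4)/\Q$ also has degree $2$. In either case $\theta$ takes values in $\{\pm 1\}$.

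Next, for any $a \in \Z_p^{\times}$ the Teichm\"uller-type decomposition $a = \theta(a) \langle a \rangle$ gives $\langle a \rangle = \theta(a)^{-1} a$, whence
$$\langle a \rangle^{2} = \theta(a)^{-2} a^{2} = a^{2}, $$
because $\theta^{2} = \1$. Specializing $s = -1$ then yields $a^{-1} \langle a \rangle^{1-s} = a^{-1} \langle a \rangle^{2} = a$, which is exactly the simplification announced in the statement.

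Substituting $s = -1$ in the formula of Theorem \ref{Lp}, the Euler-type factor $\ffrac{1}{1 - \chi(c) \langle c \rangle^{1-s}}$ becomes $\ffrac{1}{1 - \chi(c) \langle c \rangle^{2}}$, and inside the limit the term $a^{-1} \langle a \rangle^{1-s} \chi(a)$ becomes $a \chi(a)$; combining these yields the desired identity. There is essentially no obstacle here: the only point deserving vigilance is, for $p = 2$, the distinction between $\theta$ (nontrivial, of conductor $4$) and the Teichm\"uller character $\omega$ (trivial in that case), so that the identity $a = \theta(a) \langle a \rangle$ must be read modulo $q$ rather than modulo $p$.
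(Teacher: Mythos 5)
Your proposal is correct and follows exactly the paper's own (one-line) justification: specialize Theorem \ref{Lp} at $s=-1$ and use $a=\theta(a)\langle a\rangle$ together with $\theta^2=\1$ for $p\in\{2,3\}$ to get $\langle a\rangle^2=a^2$, hence $a^{-1}\langle a\rangle^{1-s}=a$. The extra remark distinguishing $\theta$ from $\omega$ when $p=2$ is a sensible precaution but adds nothing beyond what the paper already assumes.
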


\begin{corollary}\label{formulasbis}
Assume $\chi \ne \1$ and $\chi(c) \ne 1$. For $s=-1$, $\langle c \rangle 
\equiv 1 \pmod q$ and $\chi(c) \in \mu_{[K:\Q]}^{} \setminus \{1\}$; 
so, in the quadratic case with $p=3$ and in the cubic case with $p=2$, 
$1-\chi(c) \langle c \rangle^2$ is invertible. Then:

\smallskip
$\bullet$ $1-\chi(c) \langle c \rangle^2 \sim 2$ for the quadratic case and $p=2$, 

\smallskip
$\bullet$ $1-\chi(c) \langle c \rangle^2\sim 1-\zeta_3$ for the cubic case and $p=3$.

\smallskip\noindent
From formulas of Section \ref{formulas} one gets:

\smallskip
(i) For $p=2$ in the quadratic case $K=\Q(\sqrt m)$, $m \ne 2, 3$ and a half summation 
giving $\frac{1}{2}\,\BL_2(s, \chi)$,  for 
$\order \BR_2(\BZ_K)[2^\infty] \!=\! \ffrac{1}{4} \order \BK_2(\BZ_K)[2^\infty]$
one obtains (since $1-\chi(c) \langle c \rangle^2 \sim 2$):
\begin{equation*}
\left \{\begin{aligned}
\order \BR_2(\BZ_K)[2^\infty] & \sim \ffrac{1}{2}\,\BL_2(-1, \chi) \\
& \equiv \ffrac{1}{2}\! \sm_{a = 1}^{f_n/2} \big [\lambda_a^n(c) 
+ \ffrac{1-c}{2} \big] a \chi(a) \pmod {2^n}.
\end{aligned}\right.
\end{equation*}

\smallskip
(ii) For the cubic case with $p=3$, 
$(1-\chi(c) \langle c \rangle^2)(1-\chi^2 (c) \langle c \rangle^2) 
\sim (1 - \zeta_3)( 1 - \zeta_3^2) \sim 3$, giving, with $\Norm = \Norm_{\Q(\mu_3)/\Q}$:
\begin{equation*}
\left \{\begin{aligned}
\order \BK_2(\BZ_K)[3^\infty] & \sim \BL_3(-1, \chi) \cdot \BL_3(-1, \chi^2) \\
& \equiv  \ffrac{1}{3}\,\Norm \Big( \sm_{a = 1}^{f_n/2} \big [\lambda_a^n(c) 
+ \ffrac{1-c}{2} \big] a \chi(a) \Big) \!\! \pmod {3^n}.
\end{aligned}\right.
\end{equation*}
\end{corollary}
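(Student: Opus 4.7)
The plan is to substitute Corollary \ref{s=-1} into the $\BK_2$- and $\BR_2$-formulas of Section \ref{formulas}, after two preparatory steps: (a) identifying the $p$-adic associate class of the Euler factor $1-\chi(c)\langle c\rangle^2$ at $s=-1$, and (b) reducing the Stickelberger-type sum over $[1,f_n]$ to a sum over $[1,f_n/2]$ via the even-character symmetry $a\mapsto f_n-a$.

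For (a), since $c$ is odd and prime to $p$, $\langle c\rangle\in 1+q\Z_p$, hence $\langle c\rangle^2\in 1+8\Z_2$ when $p=2$ and $\langle c\rangle^2\in 1+3\Z_3$ when $p=3$. In the real quadratic case with $p=2$, the hypothesis $\chi(c)\ne 1$ forces $\chi(c)=-1$, so $1-\chi(c)\langle c\rangle^2=1+\langle c\rangle^2\equiv 2\pmod 8$, which is $\sim 2$. In the cyclic cubic case with $p=3$, $\chi(c)\in\{\zeta_3,\zeta_3^2\}$; the congruence $\langle c\rangle^2\equiv 1\pmod 3$ yields $1-\chi(c)\langle c\rangle^2\equiv 1-\chi(c)\pmod 3$ in $\Z_3[\zeta_3]$, and since $1-\zeta_3$ is a uniformizer of that ring, $1-\chi(c)\langle c\rangle^2\sim 1-\zeta_3$.

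For (b), a direct computation shows $(f_n-a)'_c=f_n-a'_c$, whence $\lambda^n_{f_n-a}(c)=(c-1)-\lambda^n_a(c)$. Combined with $\chi(f_n-a)=\chi(-1)\chi(a)=\chi(a)$ (since $K$ is real, every such $\chi$ is even), pairing the summands $a$ and $f_n-a$ yields, modulo $f_n$,
\[
\Sigma_n(\psi)\,:=\,\sum_{a=1}^{f_n}\Big[\lambda^n_a(c)+\tfrac{1-c}{2}\Big]a\,\psi(a)\,\equiv\,2\sum_{a=1}^{f_n/2}\Big[\lambda^n_a(c)+\tfrac{1-c}{2}\Big]a\,\psi(a),
\]
valid for $\psi\in\{\chi,\chi^2\}$. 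With (a) and (b) in hand, the assembly is mechanical. For (i), Section \ref{formulas}(a)(i) combined with \eqref{regular} gives $\order\BR_2(\BZ_K)[2^\infty]\sim\tfrac14\order\BK_2(\BZ_K)[2^\infty]\sim\tfrac12\BL_2(-1,\chi)$; Corollary \ref{s=-1} and (a) then give $\BL_2(-1,\chi)\sim\tfrac12\Sigma_n(\chi)$, and (b) collapses this to the stated congruence modulo $2^n$. For (ii), Section \ref{formulas}(b)(i) gives $\order\BK_2(\BZ_K)[3^\infty]\sim\BL_3(-1,\chi)\BL_3(-1,\chi^2)$; two applications of Corollary \ref{s=-1} together with (a) turn the product of denominators into $(1-\zeta_3)(1-\zeta_3^2)\sim 3$, while the numerator $\Sigma_n(\chi)\Sigma_n(\chi^2)$ equals $\Norm_{\Q(\mu_3)/\Q}(\Sigma_n(\chi))$ since $\chi$ and $\chi^2$ are the two $\Q$-conjugates of $\chi$; (b) applied inside the norm finishes the proof.

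The main obstacle is the bookkeeping in step (b): one must verify carefully that the drift $\tfrac{1-c}{2}$ in the twist and the arithmetic term $\lambda^n_a(c)$ combine under $a\mapsto f_n-a$ so that the full sum equals exactly twice the half sum modulo the full conductor $f_n$, so that no $p$-adic precision is lost beyond the single factor already absorbed by the Euler factor in (a); the rest is direct arithmetic in $\Z_3[\zeta_3]$ together with the standard norm identity.
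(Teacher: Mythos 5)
Your proposal is correct and follows the same route the paper intends: evaluate the Euler factor $1-\chi(c)\langle c\rangle^{2}$ at $s=-1$ (giving $\sim 2$, resp. $\sim 1-\zeta_{3}$), substitute Corollary \ref{s=-1} into the formulas of Section \ref{formulas}, and use that $\chi$ and $\chi^{2}$ are $\Q$-conjugate to convert the product of cubic sums into a norm. The only addition is your explicit verification via $\lambda^{n}_{f_n-a}(c)=(c-1)-\lambda^{n}_a(c)$ that the full sum equals twice the half sum modulo $f_n$, a point the paper leaves implicit; that computation is right and is exactly what justifies the ``half summation'' in the statement.
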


\subsection{Genus theory of \texorpdfstring{$p$}{Lg}-adic pseudo-measures}
We have obtained in \cite[Th\'eor\`eme 0.2]{Gr34}, the following results
which does not seem very known but states the property of ``stability''
mentioned in the Introduction and may be linked to the existence (or not) of 
zeroes of $\BL_p$-functions (for examples about zeroes of $\BL_p$-functions, 
one may refer to \cite{W1, W2, W3, W4, W} and see Appendix \ref{C3} for some 
illustrations of this influence):

\begin{theorem} \label{thmfond}
Under Notations \ref{notations}, where $\chi =  \varphi \psi$ is even, $\varphi$ 
of prime-to-$p$ order and $\psi$ of order $p^e$, $e \geq 1$, we have, with 
$\BD = \sum_{\ell} p^{d_\ell}$ and $\epsilon = 0$ (resp. 
$\epsilon = 1$) when $ \varphi \ne \1$ (resp. $\varphi =  \1$):

\smallskip
{\bf (a)}  $v_{\mathfrak m} \Big(\frac{1}{2} \BL_p(s, \chi) \Big) \geq \BC(s),\,\forall\,s \in \Z_p$, 
where $\BC(s)$ is as follows:

\smallskip
\quad ($i_p$) $p \ne 2$ and $\theta^{-1} \varphi (p) \ne 1$ (i.e., $p$ not totally split in $M_0$); then:
$$\BC(s) = \BC = \BD-\epsilon. $$

\smallskip
\quad ($i_2$) $p = 2$ and $ \varphi(2) \ne 1$  (i.e., $2$ not totally split in $M_0$); then:
$$\BC(s) = \BC=\BD. $$

\smallskip
\quad ($ii_p$) $p \ne 2$, $\theta^{-1} \varphi (p) = 1$ and 
$\theta^{-1} \chi (p) \ne 1$ (i.e., $p$ totally split in $M_0$ and not totally split in $M$); then:
$$\BC(s) = \BC=\BD+p^{d_p}. $$

\smallskip
\quad ($ii_2$) $p = 2$, $ \varphi(2) = 1$ and $\theta^{-1} \chi (2) \ne 1$ (i.e., $2$ 
totally split in $M_0$ and not totally split in $M$); then:
$$\BC(s) = \BC=\BD+2^{d_2}-2 \epsilon. $$

\smallskip
\quad ($iii_p$) $p \ne 2$, $\theta^{-1} \chi (p) = 1$ (i.e., $p$ totally split in $M$); then:
$$\BC(s) :=\BD+ v_{\mathfrak m}(p\,s). $$

\smallskip
\quad ($iii_2$) $p = 2$, $\theta^{-1} \chi (2) = 1$ (i.e., $2$ totally split in $M$); then:
$$\BC(s) :=\BD+ v_{\mathfrak m}(q\,s)-2 \epsilon. $$

\smallskip
{\bf (b)}  We have either the equality:
$$v_{\mathfrak m} \big(\ffrac{1}{2} \BL_p(s, \chi) \big) = \BC(s),\,\forall\,s \in \Z_p, $$ 
or the strict inequality:
$$v_{\mathfrak m} \big(\ffrac{1}{2} \BL_p(s, \chi) \big) > 
\BC(s),\,\forall\,s \in \Z_p, $$
(with $s \ne 0$ in cases ($iii_p$) and ($iii_2$)).
\end{theorem}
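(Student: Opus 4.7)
The plan is to derive both parts from the explicit pseudo-measure description of $\BL_p(s,\chi)$ in Theorem \ref{Lp}, by isolating contributions coming from ramification in $M/M_0$ (the "genus" part) and then applying a Galois-invariance argument to the residual factor (the "stability" giving the all-or-nothing dichotomy).

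First I would start from the twisted Stickelberger pseudo-measure $\CS^{*}_{K,n}(c)(s)$ and apply the norm relations \eqref{norms}. Passing from $\Q(\mu_{f_n})$ down to the field cut out by $\chi$ introduces, for each rational prime $\ell\ne p$ ramified in $M/M_0$, a factor of the form $\bigl(1-\chi(\ell)\langle\ell\rangle^{1-s}\bigr)$ in the $\chi$-component of the pseudo-measure. When $\ell$ is additionally totally split in $M_0/\Q$, one has $\theta^{-1}\varphi(\ell)=1$ (resp.\ $\varphi(\ell)=1$ for $p=2$), so this factor equals $1-\psi(\ell)\cdot u$ for a unit $u \in 1+\mathfrak m$; because $\psi$ has $p$-power order and $\psi(\ell)$ is a primitive $p^{d_\ell}$-th root of unity (by the definition of the splitting degree $p^{d_\ell}$), the valuation of $1-\psi(\ell)u$ is exactly $p^{d_\ell}$ for $v_{\mathfrak m}$. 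Summing these contributions over all ramified $\ell$ totally split in $M_0$ yields the $\BD$ appearing in every $\BC(s)$.

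Next I would treat the $p$-place separately, since the Mellin transform drops the Euler-like factor $\bigl(1-\chi(c)\langle c\rangle^{1-s}\bigr)$ rather than one at $p$. The three regimes in the statement correspond to the behaviour of $p$ in $M_0 \subseteq M$: in case $(i)$ the $p$-Euler factor is a unit and only the constant $\BD$ (corrected by $-\epsilon$, which accounts for the degenerate behaviour of the trivial $\varphi$) survives; in case $(ii)$ one picks up the additional $p^{d_p}$ (resp.\ $2^{d_2}-2\epsilon$) exactly as in the ramified-prime computation, because $p$ behaves like a ramified-and-split $\ell$; in case $(iii)$ the factor $1-\chi(p)\langle p\rangle^{1-s}$ becomes $1-\langle p\rangle^{1-s}$, whose $v_{\mathfrak m}$-valuation is $v_{\mathfrak m}(p\cdot s)$ (resp.\ $v_{\mathfrak m}(qs)-2\epsilon$) since $\langle p\rangle \in 1+q\Z_p$. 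This accounts for the $s$-dependence of $\BC(s)$ in the totally split case.

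For part (b), I would argue as follows. After dividing $\tfrac12\BL_p(s,\chi)$ by the explicit product of factors identified in the previous step, the quotient $R(s)$ is a power series in $s$ (via the Iwasawa isomorphism $\Lambda \simeq \Z_p[[T]]$) obtained from a pseudo-measure that is now \emph{stable} under a natural genus-type Galois action on $\Gal(M/M_0)$. Genus theory for $p$-adic pseudo-measures (as in \cite{Gr33,Gr34}) then forces the Iwasawa function $R(s)$ to be either a unit of $\Z_p[[T]]$, in which case $v_{\mathfrak m}(R(s))=0$ for every $s\in\Z_p$ and we get equality $v_{\mathfrak m}(\tfrac12\BL_p(s,\chi))=\BC(s)$, or else to lie in the maximal ideal, in which case $v_{\mathfrak m}(R(s))\geq 1$ for every $s$ and the inequality is strict. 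The dichotomy is uniform in $s$ precisely because the alternative is decided by a single coefficient of the power series $R(T)$, not by a specific evaluation.

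The hardest step, as I see it, is the split cases $(iii_p)$ and $(iii_2)$: one must reconcile the exceptional vanishing of the Euler factor at $p$ with the genus-theoretic stability argument, since the function $v_{\mathfrak m}(ps)$ blows up at $s=0$ while the pseudo-measure itself is regular there. This is handled by interpreting $\tfrac12\BL_p(s,\chi)$ as $(1-\langle p\rangle^{1-s})\cdot\widetilde R(s)$ with $\widetilde R$ honestly analytic, and applying the dichotomy to $\widetilde R$; the exclusion of $s=0$ in part (b) for these cases reflects exactly the trivial zero of the explicit factor.
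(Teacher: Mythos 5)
A preliminary remark: the paper contains no proof of Theorem \ref{thmfond}; it is imported verbatim from \cite[Th\'eor\`eme 0.2]{Gr34} (with \cite{Gr33} for the totally real generalization). So your attempt can only be judged against the mechanism of those references and against consistency with the paper's own notation --- and on that score the central step of your proposal fails. You derive the term $\BD=\sum_\ell p^{d_\ell}$ from Euler-type factors $\bigl(1-\chi(\ell)\langle\ell\rangle^{1-s}\bigr)$ attached to the primes $\ell\ne p$ ramified in $M/M_0$, claiming that for such $\ell$, totally split in $M_0/\Q$, the factor becomes $1-\psi(\ell)u$ with $\psi(\ell)$ a primitive $p^{d_\ell}$-th root of unity. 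But these $\ell$ divide the conductor of $\psi$ (they ramify in $M/M_0$), so $\psi(\ell)=0$ and $\chi(\ell)=0$ --- Notation \ref{notations}\,(iii) states this explicitly --- and every one of your factors is identically $1$. Your first step therefore contributes nothing and inequality {\bf (a)} is not established. (Even granting a nonzero Frobenius value, your count is internally inconsistent: if the splitting field of $\ell$ in $M/M_0$ has degree $p^{d_\ell}$, the decomposition group has order $p^{e-d_\ell}$, so the relevant root of unity has order $p^{e-d_\ell}$ and it is $v_{\mathfrak m}(1-\zeta_{p^{e-d_\ell}})=p^{d_\ell}$ that produces the right number, not ``$\psi(\ell)$ primitive of order $p^{d_\ell}$''.) The true source of $\BD$ is group-ring genus theory, not Euler factors of $\BL_p$: by the norm relation \eqref{norms}, the image of the pseudo-measure under the norm from $M$ to $M_0$ carries factors $1-\bigl(\tfrac{M_0}{\ell}\bigr)^{-1}$ whose $\theta^{-1}\varphi$-component vanishes exactly when $\ell$ splits totally in $M_0/\Q$; this vanishing is then lifted through the filtration of $\Z_p[\mu_{[M_0:\Q]}][\Gal(M/M_0)]$ by the ideals attached to the decomposition subgroups of the $\ell$'s, and the $\psi$-valuation of those ideals is what yields $\sum_\ell p^{d_\ell}$. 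Nothing in your write-up performs this lifting, and the corrections $-\epsilon$, $-2\epsilon$ (the case $\varphi=\1$, where $\CS_{M_0}$ acquires a denominator from the pole of $\zeta_p$) are not addressed at all.

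Your part {\bf (b)} is closer in spirit to what is needed: once {\bf (a)} holds with the correct factorization, the residual function $R$ takes values in $\Z_p[\mu_{p^e}]$ and satisfies congruences of the form $R(t)\equiv R(s)\pmod{q(t-s)}$, so the ``unit everywhere or non-unit everywhere'' dichotomy, decided by a single coefficient, is the right shape of argument; and the exclusion of $s=0$ in cases ($iii_p$), ($iii_2$) does reflect the trivial zero of the $p$-Euler factor. Note, however, that the relevant factor is $1-\theta^{-1}\chi(p)\langle p\rangle^{-s}$, which in the split case vanishes at $s=0$ and has valuation $v_{\mathfrak m}(qs)$; your $1-\langle p\rangle^{1-s}$ would place the zero at $s=1$ and give $v_{\mathfrak m}\bigl(q(1-s)\bigr)$, contradicting the stated $\BC(s)$. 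In any case this dichotomy is only as good as the factorization it divides out, and that factorization is precisely what is missing from your proof.
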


So, under the case of stability $v_{\mathfrak m} \big(\frac{1}{2} \BL_p(s, \chi) \big) 
= \BC(s),\,\forall\,s \in \Z_p$, a computation may be done at a suitable value 
$s$ for $\BL_p(s, \chi)$, e.g., $s=1$ giving $\order \CT_{K, p} \sim 
[K \cap\,\Q_\infty : \Q] \times \prd_{\chi \ne 1} \ffrac{1}{2}\,\BL_p (1, \chi)$;
thus $\order \CT_{K, 2} \sim \ffrac{1}{2}\,\BL_2 (1, \chi)$ for $K$ real quadratic, 
except $K$ of conductor $8$, or $\order \CT_{K, 3} \sim \BL_3 (1, \chi) 
\times \BL_3 (1, \chi^2)$ for $K$ cyclic cubic, except $K$ of conductor $9$.

\smallskip
Which gives the main process and Theorem \ref{main}\,{\bf A}, 
where $\BC(s) = \BC$, as soon as $s \in \Z_p^\times$, 
since $v_{\mathfrak m}(s) = 0$ in cases (iii) (e.g., $s = \pm 1$):

\begin{theorem} \label{thmfinal}
We obtain the following results:

\smallskip
(i) Let $K$ be a real quadratic field of character $\chi$ of conductor $\ne 8$, and 
set $p=2$. Assume that the knowledge of $\order \CT_{K, 2}$ (e.g., using formula 
\eqref{T=HRW}) implies $v_2 \big(\frac{1}{2} \BL_2(1, \chi) \big) = \BC$; then 
$v_2 \big(\frac{1}{2} \BL_2(s, \chi) \big) = \BC,\,\forall\,s \in \Z_2$, whence
in particular, $\order \BK_2(\BZ_K)[2^\infty] = 2^{\hbox{\tiny $\BC$}+2}$.

\smallskip
We have $\BC = \BD$ (resp. $\BC = \BD-1$) if $2$ splits (resp. does not split) 
in $M = \Q(\sqrt {-m})$.

\smallskip
(ii)  Let $K$ be a cyclic cubic field of character $\chi$ of conductor $\ne 9$, 
and set $p=3$. Assume that the knowledge of $\order \CT_{K, 3}$ implies 
$v_{\mathfrak m} \big(\BL_3(1, \chi) \big) = \BC$; then $v_{\mathfrak m} 
\big(\BL_3(s, \chi) \big) = \BC,\,\forall\,s \in \Z_3$, then 
in particular, $\order \BK_2(\BZ_K)[3^\infty] = 3^{\hbox{\tiny $\BC$}}$.
We have $\BC = \BD-1$.

\smallskip
(iii) If $v_2 \big(\frac{1}{2} \BL_2(1, \chi) \big) > \BC$ 
(resp. $v_{\mathfrak m} \big(\BL_3(1, \chi) \big) > \BC$), then one obtains
$\order \BK_2(\BZ_K)[2^\infty] > 2^{\hbox{\tiny $\BC$}+2}$ (resp. 
$\order \BK_2(\BZ_K)[3^\infty] > 3^{\hbox{\tiny $\BC$}}$).
\end{theorem}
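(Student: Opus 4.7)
The entire argument hinges on using Theorem~\ref{thmfond}\,{\bf (b)} as a stability bridge between $s=1$ (where $\order\CT_{K,p}$ lives, by the residue formula \eqref{analytic}) and $s=-1$ (where $\order\BK_2(\BZ_K)[p^\infty]$ lives, by Birch--Tate \eqref{BT}). First I translate the hypothesis on $\order\CT_{K,p}$ into a valuation statement about $\BL_p(1,\chi)$. The exclusion of conductor $8$ (resp.\ $9$) is exactly what guarantees $K\cap\Q_\infty=\Q$, so formula \eqref{analytic} reduces to $\order\CT_{K,2}\sim\tfrac{1}{2}\BL_2(1,\chi)$ for the unique non-trivial character, which — since $v_{\mathfrak m}=v_2$ in this case — reads $v_{\mathfrak m}\bigl(\tfrac{1}{2}\BL_2(1,\chi)\bigr)=\BC$; respectively to $\order\CT_{K,3}\sim\BL_3(1,\chi)\BL_3(1,\chi^2)=\Norm_{\Q(\mu_3)/\Q}\BL_3(1,\chi)$, whose $3$-adic valuation equals $v_{\mathfrak m}\bigl(\tfrac{1}{2}\BL_3(1,\chi)\bigr)$ because $\Q_3(\mu_3)/\Q_3$ is totally ramified of degree $2$.

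The ``either/or'' dichotomy of Theorem~\ref{thmfond}\,{\bf (b)} now propagates the equality $v_{\mathfrak m}\bigl(\tfrac{1}{2}\BL_p(1,\chi)\bigr)=\BC$ to every $s\in\Z_p$. At $s=-1$ one has $\BC(-1)=\BC(1)=\BC$ (even in the otherwise $s$-dependent cases $(iii_p)$ and $(iii_2)$), since $-1$ is a $p$-adic unit and $v_{\mathfrak m}(-1)=0$. Hence $v_{\mathfrak m}\bigl(\tfrac{1}{2}\BL_p(-1,\chi)\bigr)=\BC$. Translating back to $\BK_2$ uses \eqref{BT} together with the interpolation \eqref{flp}: for $p\in\{2,3\}$ and $m=2\equiv 0\pmod{p-1}$, one has $\BL(-1,\chi)=(1-p\,\chi(p))\BL_p(-1,\chi)$ with Euler factor a $p$-adic unit. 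Substituting the explicit prefactors of Section~\ref{formulas}\,({\bf a})(i) and ({\bf b})(i) gives $\order\BK_2(\BZ_K)[2^\infty]\sim 2\,\BL_2(-1,\chi)$ and $\order\BK_2(\BZ_K)[3^\infty]\sim\BL_3(-1,\chi)\BL_3(-1,\chi^2)$. The quadratic $2$-adic count is then $v_2(2)+v_2(\BL_2(-1,\chi))=1+(\BC+1)=\BC+2$, proving (i); the cubic $3$-adic count is $v_3\bigl(\Norm\BL_3(-1,\chi)\bigr)=v_{\mathfrak m}\bigl(\BL_3(-1,\chi)\bigr)=\BC$, proving (ii).

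Part (iii) follows from the same chain applied to the strict-inequality horn of the dichotomy: $v_{\mathfrak m}\bigl(\tfrac{1}{2}\BL_p(1,\chi)\bigr)>\BC$ forces $v_{\mathfrak m}\bigl(\tfrac{1}{2}\BL_p(-1,\chi)\bigr)>\BC$, and the same translation delivers the announced strict lower bounds on $\order\BK_2(\BZ_K)[p^\infty]$. All the substance comes from Theorem~\ref{thmfond}; the hardest part for me will be verifying the constant-matching carefully, i.e.\ checking that the $\tfrac{1}{2}$ in $\tfrac{1}{2}\BL_p(s,\chi)$, the Birch--Tate prefactor $w_2(K)/12$, and the unit Euler factor $1-p\,\chi(p)$ really do combine to produce exactly the shifts ``$+2$'' (quadratic) and ``$0$ after norm'' (cubic) recorded in the case-by-case tables of Section~\ref{formulas}, with no hidden $p$-adic unit absorbing or creating a power of $p$.
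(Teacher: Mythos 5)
Your proposal is correct and follows exactly the paper's own route: translate the hypothesis on $\order\CT_{K,p}$ into $v_{\mathfrak m}\bigl(\frac{1}{2}\BL_p(1,\chi)\bigr)=\BC$ via \eqref{analytic} (the conductor $8$/$9$ exclusions killing the factor $[K\cap\Q_\infty:\Q]$), invoke the dichotomy of Theorem \ref{thmfond}\,{\bf (b)} with $\BC(\pm1)=\BC$ to transport the equality (or strict inequality) to $s=-1$, and convert back through Birch--Tate and the prefactors of Section \ref{formulas}. Your valuation bookkeeping ($v_{\mathfrak m}=v_2$ in the quadratic case giving the shift $\BC+2$, and $v_3\circ\Norm_{\Q(\mu_3)/\Q}=v_{\mathfrak m}$ in the totally ramified cubic case giving $\BC$) matches the paper's computations.
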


Considering for instance \cite[Theorem 1.2]{DL} giving, for this specific family of
real quadratic fields: 
$$\BK_2(\BZ_K)[2^\infty] \simeq (\Z/2\Z)^{n-1} \times \Z/2^\delta\Z,\ \,\delta \geq 3, $$ 
the Deng--Li conjecture is $\delta = 3$; we intend to prove it by means of
the previous process from showing that $\order \CT_{K, 2} = 2^{n}$:

\begin{corollary}\label{remfond}
Under the conditions: $\order \CH_{K, 2} = 2^{n-1}$, $\CR_{K, 2}=1$
and $\order \CW_{K, 2} = 2$, then $\order \CT_{K, 2} = 2^n$, 
giving the valuation $n$ for $\frac{1}{2} \BL_2(1, \chi)$; so, Theorem \ref{thmfinal}\,(i) 
yields $\BC = \BD = n$, since $2$ splits in $\Q(\sqrt {-m})$, and the conjecture 
$\delta = 3$ follows.
\end{corollary}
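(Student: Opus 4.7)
The plan is to chain together the three hypotheses via the class-field-theoretic formula for $\order\CT_{K,2}$, translate the result into a $2$-adic valuation of $\BL_2(1,\chi)$, recognize this valuation as the stability constant $\BC$ of Theorem \ref{thmfond}, and transport the equality to $s=-1$ via the stability alternative to read off $\order\BK_2(\BZ_K)[2^\infty]$.

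The first step uses that $K$ is real, so the simplified form of \eqref{T=HRW} noted immediately after the formula gives
$$\order\CT_{K,2} = \order\CH_{K,2}\cdot\order\CR_{K,2}\cdot\order\CW_{K,2} = 2^{n-1}\cdot 1\cdot 2 = 2^n.$$
The analytic identity \eqref{analytic}, applied with the unique non-trivial character $\chi$ of $K$ and $[K\cap\Q_\infty:\Q]=1$ (since $m$ is odd, $K\neq\Q(\sqrt 2)$), then yields $v_2\big(\frac{1}{2}\BL_2(1,\chi)\big)=n$.

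The second step computes $\BC$. From $\ell_1\equiv 3\pmod 8$ and $\ell_i\equiv 5\pmod 8$ for $i\geq 2$, using that $n$ is even and $5^2\equiv 1\pmod 8$, one checks $m\equiv 7\pmod 8$; hence $-m\equiv 1\pmod 8$ and $2$ splits in $M=\Q(\sqrt{-m})$, so Theorem \ref{thmfinal}(i) asserts $\BC=\BD$. Since $-m\equiv 1\pmod 4$, the discriminant of $M$ is $-m$, and the primes $\ell\ne 2$ ramified in $M/M_0=M/\Q$ are exactly $\ell_1,\dots,\ell_n$; each contributes $2^{d_\ell}=1$ because a prime ramified in a quadratic extension has trivial decomposition subfield. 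Thus $\BD=n$, i.e. $\BC=n$. The third step invokes Theorem \ref{thmfinal}(i): the equality $v_2\big(\frac{1}{2}\BL_2(1,\chi)\big)=\BC=n$ puts us in the equality alternative of Theorem \ref{thmfond}(b), propagating to all $s\in\Z_2$, hence $\order\BK_2(\BZ_K)[2^\infty]=2^{\BC+2}=2^{n+2}$. Combined with the Deng--Li structure $\BK_2(\BZ_K)[2^\infty]\simeq(\Z/2\Z)^{n-1}\times\Z/2^\delta\Z$ of \cite[Theorem 1.2]{DL} (with $\delta\geq 3$), the order equation $n-1+\delta=n+2$ forces $\delta=3$.

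The genuine work is upstream, in justifying the three hypotheses from the Legendre-symbol data. The structure $\order\CH_{K,2}=2^{n-1}$ is a R\'edei-matrix refinement of the $2$-rank formula of classical genus theory, controlled by the prescribed symbols $\big(\frac{\ell_i}{\ell_j}\big)$. The identity $\order\CW_{K,2}=2$ is a local computation: $m\equiv 7\pmod 8$ gives $\Q_2(\sqrt m)=\Q_2(i)$, so ${\rm tor}_{\Z_2}(\CU_{K,2})=\mu_4$, yielding $\CW_{K,2}\simeq\mu_4/\mu_2$. The most delicate input is the triviality of the normalized $2$-adic regulator $\CR_{K,2}$, equivalent to a non-divisibility statement on $\log_2\varepsilon_K/(2\sqrt m)$; this is the main obstacle, requiring specific information about the fundamental unit that goes beyond what genus theory alone provides.
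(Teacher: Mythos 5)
Your proposal is correct and follows essentially the same route as the paper: formula \eqref{T=HRW} in the real case gives $\order \CT_{K,2}=2^n$, hence $v_2\big(\frac{1}{2}\BL_2(1,\chi)\big)=n=\BD=\BC$ (since $m\equiv -1\pmod 8$ makes $2$ split in $\Q(\sqrt{-m})$), and Theorem \ref{thmfinal}\,(i) then forces $\order \BK_2(\BZ_K)[2^\infty]=2^{n+2}$, whence $\delta=3$ by counting against the Deng--Li structure. You merely make explicit a few details the paper leaves implicit (the computation $\BD=n$ via $d_\ell=0$ for ramified $\ell$, and $[K\cap\Q_\infty:\Q]=1$), and you correctly locate the substantive work in the hypotheses, which the paper establishes separately in Lemmas \ref{lem1}--\ref{lem5} and Theorem \ref{regulator}.
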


\subsection{Modulus of continuity of \texorpdfstring{$\BL_p(s, \chi)$}{Lg}}
Whatever $p$, if $\chi$ is of prime-to-$p$ order, genus 
theory is empty and this raise the question of the ``independence'' 
(or not) of $\order \BK_2 (\BZ_K)[p^\infty]$ and $\order \CT_{K, p}$.
This is related to the rank formula 
\eqref{equalranks} when $K$ contains the maximal real subfield of 
$\Q(\mu_p)$, but is also a consequence of the existence of a
non-trivial modulus of continuity for $\BL_p(s, \chi)$, whatever 
$\chi = \varphi \psi$, as follows \cite[Th\'eor\`eme 0.3]{Gr34}:

\begin{theorem}
With Notations \ref{notations}, we have, for all $s, t \in \Z_p$:
$$\ffrac{1}{2}\BL_p(t, \chi)-\ffrac{1}{2}\BL_p(s, \chi) \equiv a\,(t-s)
\pmod {q\,{\mathfrak m}^{\!\hbox{\tiny $\BV$}} (t-s)}, $$
with a computable constant $a$ ($a = 0$ when $\varphi \ne \1$) and $\BV$ of 
the form $\BV = \ds \BD-\max_\ell (p^{d_\ell}-p^{d_p}+\epsilon, \epsilon)$
($\epsilon = 1$ if $\varphi = \1$, $0$ otherwise), 
where $\ell$ runs trough the set of primes ramified in $M/M_0$, totally 
split in $M_0/\Q$ and such that $\frac{1}{q} \log_p(\ell) \not\equiv 0 \pmod p$.
\end{theorem}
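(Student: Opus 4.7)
My plan is to derive the modulus of continuity directly from the twisted pseudo-measure representation of Theorem~\ref{Lp}. Fix an integer $c$, prime to $2pf$, with $\chi(c) \neq 1$, so that the prefactor $(1-\chi(c)\langle c\rangle^{1-s})^{-1}$ is a $p$-adic analytic unit whose $s$-dependence can be handled separately by the very same Taylor expansion treated below. It then suffices to control, for $n$ large enough, the variation in $s$ of
$$\Sigma_n(s) := \sum_{a=1}^{f_n} \Bigl[\lambda_a^n(c)+\tfrac{1-c}{2}\Bigr]\,a^{-1}\,\langle a\rangle^{1-s}\,\chi(a),$$
and then pass to the limit $n \to \infty$.

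The crucial analytic step is to write $\langle a\rangle^{1-t}-\langle a\rangle^{1-s} = \langle a\rangle^{1-s}\bigl(\exp_p((s-t)\log_p\langle a\rangle) - 1\bigr)$. Since $\langle a\rangle \in 1+q\Z_p$ forces $\log_p\langle a\rangle \in q\Z_p$, the $k$-th Taylor term lies in $q^k(t-s)^k\Z_p$ and is absorbed into the error ideal $q\,\mathfrak m^{\BV}(t-s)$ for $k\geq 2$. Using $\log_p\langle a\rangle = \log_p a$ (since $\theta(a)$ is a root of unity of prime-to-$p$ order in $\Z_p^\times$), the problem reduces to estimating, modulo $\mathfrak m^{\BV}$, the logarithmic pseudo-measure
$$\Lambda_n(s) := \sum_{a=1}^{f_n} \Bigl[\lambda_a^n(c)+\tfrac{1-c}{2}\Bigr]\,a^{-1}\,\langle a\rangle^{1-s}\,\chi(a)\,\log_p a.$$
Decomposing $\log_p a = \sum_\ell v_\ell(a)\log_p \ell$ along the rational primes dividing $a$ rewrites $\Lambda_n$ as a sum indexed by primes $\ell$, each inner sum being a twisted $\chi$-average of the same kind analysed in the proof of \cite[Th\'eor\`eme~0.2]{Gr34}. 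At this stage the genus-theoretic machinery of Theorem~\ref{thmfond} takes over: when $\varphi \neq \1$, the non-trivial prime-to-$p$ part of $\chi$ forces the inner Dirichlet character sums to vanish by orthogonality, yielding the vanishing $a = 0$ of the linear coefficient; when $\varphi = \1$, only the primes $\ell \neq p$ ramified in $M/M_0$ and totally split in $M_0/\Q$ survive, each contributing a term of size $p^{d_\ell}\cdot \tfrac{1}{q}\log_p \ell$, and they assemble into the explicit value of the constant $a$ of the statement.

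The hard point, which I expect to be the main obstacle, is the sharp valuation accounting that produces $\BV = \BD-\max_\ell(p^{d_\ell}-p^{d_p}+\epsilon,\epsilon)$ rather than simply $\BV = \BD$. Compared with the bare estimate $v_{\mathfrak m}(\Sigma_n)\geq \BC$ of Theorem~\ref{thmfond}, the extra weight $\tfrac{1}{q}\log_p\ell$ at each ramified prime $\ell$ depletes the valuation by $p^{d_\ell}$, and the worst such loss is reached at the prime of largest $p^{d_\ell}$ --- this is precisely the quantity subtracted from $\BD$. The corrections $-p^{d_p}$ and $+\epsilon$ track, respectively, the split/non-split behaviour at $p$ and the unit-character case, in exact parallel with cases ($ii_p$)--($iii_2$) of Theorem~\ref{thmfond}; and the condition $\tfrac{1}{q}\log_p\ell \not\equiv 0 \pmod p$ built into the definition of $\BV$ is precisely what lets primes $\ell$ with an additional $p$-divisibility of their logarithm be swallowed by the error ideal rather than enter the main term. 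The dichotomy between equality and strict inequality in Theorem~\ref{thmfond}\,\textbf{(b)} is inherited in the same way, reflecting possible additional cancellations beyond the generic genus-theoretic bound.
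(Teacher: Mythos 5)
The paper does not actually prove this statement: it is imported verbatim from \cite[Th\'eor\`eme 0.3]{Gr34}, so the only basis for comparison is the strategy of that reference, which---exactly as for Theorem \ref{thmfond}---rests on the factorization of the Stickelberger pseudo-measure through the norm relations \eqref{norms}. Your starting point (Taylor-expanding $\langle a\rangle^{1-s}$ inside the measure of Theorem \ref{Lp}) is sensible, but the central mechanism is misidentified. The primes $\ell$ that carry the constant $a$ and the exponent $\BV$ are ramified in $M/M_0$, hence divide the prime-to-$p$ conductor $f$ and therefore $f_n$, hence are coprime to every summation index $a$; so the decomposition $\log_p a=\sum_{\ell\mid a}v_\ell(a)\log_p\ell$ can never reach them, and the ``inner sums indexed by primes $\ell$'' you propose are unrelated to the set of $\ell$ in the statement. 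What actually produces $\log_p\ell$, the hypothesis $\frac{1}{q}\log_p(\ell)\not\equiv0\pmod p$, and the shape $\BV=\BD-\max_\ell(p^{d_\ell}-p^{d_p}+\epsilon,\epsilon)$ is the variation in $s$ of the Euler-type factors $1-\chi(\ell)\langle \ell\rangle^{1-s}$, the Mellin transforms of the factors $1-\sigma_\ell^{-1}$ of \eqref{norms}: moving from $s$ to $t$ trades the static valuation $p^{d_\ell}$ of one such factor for $v_{\mathfrak m}(q(t-s))+v_{\mathfrak m}\big(\frac{1}{q}\log_p\ell\big)$, and the worst single trade is exactly the $\max_\ell$ subtracted from $\BD$. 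Without this factorization your argument can produce neither $\BV$ nor the vanishing $a=0$ for $\varphi\ne\1$ (orthogonality does not apply: the coefficients $\lambda_a^n(c)+\frac{1-c}{2}$ are not constant on the cosets over which you would need to sum $\chi$).

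There is a second, quantitative gap. You absorb the order-$k$ Taylor terms, $k\ge2$, into $q\,\mathfrak m^{\BV}(t-s)$ from the crude termwise bound $q^k(t-s)^k\Z_p$; but $q^{k-1}(t-s)^{k-1}\in\mathfrak m^{\BV}$ fails as soon as $t-s$ is a unit and $\BV>(k-1)\,v_{\mathfrak m}(q)$, and $\BV$ can be arbitrarily large since $\BD=\sum_\ell p^{d_\ell}$ grows with the number of ramified primes. Each Taylor coefficient $\sum_a[\cdots]\,a^{-1}\langle a\rangle^{1-s}\chi(a)(\log_p\langle a\rangle)^k$ must itself be bounded below by the genus divisibility of the measure (the integrand still factors through the cyclotomic quotient, so the $\BD$-divisibility survives and the extra $q^k/k!$ is then a genuine gain)---which is again the norm-relation argument you have not invoked. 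A smaller issue of the same nature: when $\varphi=\1$ and $\chi=\psi$ has $p$-power order, the prefactor $1-\chi(c)\langle c\rangle^{1-s}$ is not a unit (its $\mathfrak m$-valuation is that of $1-\psi(c)>0$), so it cannot simply be ``handled separately'' and divided out of a congruence; this is precisely why the object must be treated as a pseudo-measure rather than a measure in the unit-character case.
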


In the case $\psi = \1$, $\varphi \ne \1$, the above formula becomes:
$$\ffrac{1}{2}\BL_p(t, \varphi) \equiv \ffrac{1}{2}\BL_p(s, \varphi) \pmod {q\,(t-s)}. $$

Thus, even if the genus principle is empty, there is a non-trivial congruence
between the orders of the two corresponding invariants.

\smallskip
For instance, we get $\ffrac{1}{2}\BL_p(1, \varphi) \equiv
\ffrac{1}{2}\BL_p(-1, \varphi) \pmod {2q}$, whence:
\begin{equation*}
\left\{\begin{aligned}
\order \BK_2(\BZ_K)[3^\infty] & \equiv \order \CT_{K, 3} \pmod 3,\ \ 
\hbox{for quadratic fields},  \\
\order \BR_2(\BZ_K)[2^\infty] & \equiv \order \CT_{K, 2} \pmod 8,\ \  
\hbox{for cyclic cubic fields}.
\end{aligned}\right.
\end{equation*}

In the case of real quadratic fields for $p=3$ (resp. of cyclic cubic fields for $p=2$), see 
the numerical results given in Appendix \ref{B2} (resp. Appendix \ref{C2}).

\section{Proof of the Deng--Li conjecture \texorpdfstring{$\delta = 3$}{Lg}}
\label{DengLi}
We consider the family defined in \cite[Theorem 1.2]{DL}. We will prove the 
conjecture $\delta = 3$ in the writing:
$$\BK_2(\BZ_K)[2^\infty] \simeq (\Z/2\Z)^{n-1} 
\times \Z/2^\delta \Z$$ 
and some other properties of this family of real quadratic fields.

\smallskip
Recall that $K = \Q(\sqrt m)$, $m= \ell_1 \ell_2 \cdots \ell_n$, $n$ even, with:

\smallskip
(i) $\ell_1 \equiv 3 \pmod 8$, $\ell_i \equiv 5 \pmod 8$ for $i \geq 2$, 

\smallskip
(ii) $\big( \frac{\ell_1}{\ell_2} \big) = -1$,  $\big( \frac{\ell_1}{\ell_j} \big) = 1$ for $j \geq 3$, 

\smallskip
(iii) $\big( \frac{\ell_i}{\ell_j} \big) = -1$ for $2 \leq i < j \leq n$.

\smallskip
Condition (i) implies $m \equiv -1 \pmod 8$ and $K$ of discriminant $4m$; then
$\big( \frac{\ell_i}{\ell_j} \big) = \big( \frac{\ell_j}{\ell_i} \big)$ for all $i, j$ since
for $i \ne j$, one of the two primes is congruent to $1$ modulo $4$.
It implies also that the fundamental unit $\varepsilon$ is of norm $1$ since $-1$
is not norm in $K/\Q$; then, if $\varepsilon = a+b \sqrt m$, one has $a^2+b^2 
\equiv 1 \pmod 8$, whence, either $a = 4 a'$ with $b$ odd, or $b = 4b'$ with $a$ odd.

\smallskip
Let ${\mathfrak p}$ be the prime ideal above $(2)$ and let $K_ {\mathfrak p}$
be the completion of $K$ at ${\mathfrak p}$; thus $K_ {\mathfrak p} = \Q_2(\sqrt {-1})$
proving that $2$ is local norm at $2$ in $K/\Q$ and that the norm 
group of local units is equal to $1+4 \Z_2$. 

\smallskip
The Hasse norm residue 
symbols, of the form $\Big(\frac{\ell_i,\  K/\Q}{\ell_j} \Big)$, characterizing 
the property ``$\ell_i$ local norm at $\ell_j$ in $K/\Q$'', are given by the 
quadratic residue symbols $\big(\frac{\ell_i}{\ell_j} \big)$; indeed, $\ell_i$
norm in $\Q_{\ell_j}(\sqrt m)/\Q_{\ell_j}$ is equivalent (for $i \ne j$) to
$\ell_i$ square in $\Q_{\ell_j}^\times$ since the norm group of local 
units of $\Q_{\ell_j}(\sqrt m)$ is of index $2$ in $\mu_{\ell_j-1}^{} 
\oplus (1+\Z_{\ell_j}^{})$.

\smallskip
We can add the properties:

(iv) $\big( \frac{2}{\ell_i} \big) = \big(-1\big)^{\frac{\ell_i^2-1}{8}} = -1$, 
for all $i$, $\big( \frac{\ell_1}{2} \big) = -1$, $\big( \frac{\ell_i}{2} \big) = 1$ 
for $i \geq 2$ (in the meaning $\ell_i$ norm (or not) in $\Q_2(\sqrt{-1})/\Q_2$).

\smallskip
(v) $\big( \frac{\ell_i}{\ell_i} \big) = 1$ for all $i \ne 2$ and $\big( \frac{\ell_2}{\ell_2} \big) = -1$, 
obtained from the product formula of the Hasse norm residue symbols of fixed $\ell_i$ with 
$n$ even. 

\medskip
Put $G := \Gal(K/\Q) =: \langle \sigma \rangle$ and: 
$$\Omega_K := \{(s_1, \ldots, s_{n+1}) 
\in G^{n+1},\ s_1 \cdots s_{n+1} =1 \} \simeq G^n; $$ 
let $h_K$ be the map $\Q^\times \to \Omega_K$ defined, by 
means of the Hasse norm residue symbols, by
$h_K(x) = \Big(\Big(\frac{x,\  K/\Q}{v} \Big) \Big)_{\! v}$, 
where $v$ runs trough the $n+1$ places ramified in $K/\Q$, 
and for which the product formula:
$$\prd_{v\ {\rm ramified}}\Big(\ffrac{x,\  K/\Q}{v} \Big) = 1$$ 
holds as soon as $(x)$ is the norm of an ideal in $K/\Q$, hence local 
norm at every non-ramified place (i.e., $x \in \{-1, 1\} \cdot \Q^{\times 2}$).

Whence the matrix of symbols, product formula taken on each line:
\begin{equation}\label{array}
\begin{tabular}{|l|l|l|l|c|c|c|c|c|c|c}
\hline
\ft${}$\ns & \ft$2$\ns  & \ft$\ell_1$\ns & \ft$\ell_2$\ns & \ft$\ell_3$\ns & \ft$\ldots$\ns 
& \ft$ \ell_i$\ns & \ft$\ldots$\ns & \ft$\ell_n$\ns  \\  
\hline
\ft$2$\ns & \ft$1$\ns  & \ft$-1$\ns & \ft$-1$\ns & \ft$-1$\ns & \ft$\ldots$\ns 
& \ft$-1$\ns & \ft$\ldots$\ns & \ft$-1$\ns  \\  
\hline
\ft$\ell_1$\ns & \ft$-1$\ns  & \ft$1$\ns & \ft$-1$\ns & \ft$ 1$\ns & \ft$\ldots$\ns 
& \ft$1$\ns & \ft$\ldots$\ns & \ft$1$\ns  \\  
\hline 
\ft$\ell_2$\ns & \ft$1$\ns  & \ft$-1$\ns & \ft$-1$\ns & \ft$-1$\ns & \ft$\ldots$\ns 
& \ft$-1$\ns & \ft$\ldots$\ns & \ft$-1$\ns  \\ 
\hline 
\ft$\ell_3$\ns & \ft$1$\ns  & \ft$1$\ns & \ft$-1$\ns & \ft$1$\ns & \ft$\ldots$\ns 
& \ft$-1$\ns & \ft$\ldots$\ns & \ft$-1$\ns  \\ 
\hline 
\ft${\vdots}$\ns & \ft${}$\ns  & \ft${}$\ns & \ft${}$\ns & \ft${}$\ns & \ft${}$\ns 
& \ft${}$\ns & \ft${}$\ns & \ft${}$\ns   \\ 
\hline
\ft$\ell_i$\ns & \ft$1$\ns  & \ft$1$\ns & \ft$-1$\ns & \ft$-1$\ns & \ft$\ldots$\ns 
& \ft$1$\ns & \ft$\ldots$\ns & \ft$-1$\ns   \\   
\hline 
\ft${\vdots}$\ns & \ft${}$\ns  & \ft${}$\ns & \ft${}$\ns & \ft${}$\ns & \ft${}$\ns 
& \ft${}$\ns & \ft${}$\ns & \ft${}$\ns  \\ 
\hline
\ft$\ell_n$\ns & \ft$1$\ns  & \ft$1$\ns & \ft$-1$\ns & \ft$-1$\ns & \ft$\ldots$\ns 
& \ft$-1$\ns & \ft$\ldots$\ns & \ft$1$\ns  \\  
\hline 
\ft$-1$\ns & \ft$-1$\ns  & \ft$-1$\ns & \ft$1$\ns & \ft$1$\ns & \ft$\ldots$\ns 
& \ft$1$\ns & \ft$\ldots$\ns & \ft$1$\ns   \\ 
\hline
\end{tabular}
\end{equation}

\subsection{Proof of \texorpdfstring{$\CH_{K, 2} \simeq (\Z/2\Z)^{n-1}$}{Lg}}
This is proven in \cite{DL}, but we can bring more informations and remarks.
The first one is given by the Chevalley--Herbrand formula \cite[pp. 402-406]{Che}
in $K/\Q$:
$$\order \CH_{K, 2}^G = \frac{2^{n+1}}
{[K : \Q] (E_\Q : E_\Q \cap \Norm_{K/\Q} (K^\times))}, $$ 

\noindent
since $n+1$ primes ramify, with $E_\Q = \{\pm 1\}$; we shall write instead:
\begin{equation}
\order \CH_{K, 2}^G = \frac{\order \Omega_K}{\order h_K (E_\Q)} =
\frac{2^n}{\order h_K (\{\pm 1\})} = 2^{n-1}, 
\end{equation}

\noindent
since $-1 \not\in \Norm_{K/\Q} (K^\times)$. 

\begin{lemma}\label{lem1}
The subgroup $\CH_{K, 2}^G$ is elementary of $2$-rank $n-1$; it is generated
by the classes of the prime ideals ${\mathfrak l}_i \mid \ell_i$, $i = 1, \ldots , n$, 
and by the class of ${\mathfrak p} \mid (2)$. There are two independent
relations of principality of the form ${\mathfrak p}^{a_0} \prod_{i=1}^n 
{\mathfrak l}_i^{a_i} = (\alpha)$ between the ramified primes (where the 
exponents are $0$ or $1$), an obvious one being 
$\prod_{i=1}^n {\mathfrak l}_i = (\sqrt m)$.
\end{lemma}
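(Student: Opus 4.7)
My plan is to combine the Chevalley--Herbrand count $\order \CH_{K,2}^G = 2^{n-1}$, which is already established in the display just above, with an analysis of the subgroup $\Gamma \subseteq \CH_K$ generated by the classes of the $n+1$ ramified primes $\mathfrak p$ and $\mathfrak l_1, \ldots, \mathfrak l_n$. Since each ramified prime is $G$-fixed and its square is principal ($\mathfrak p^2 = (2)$, $\mathfrak l_i^2 = (\ell_i)$), $\Gamma$ sits inside $\CH_{K,2}^G$ and is automatically elementary abelian of exponent dividing $2$. The identification $\Gamma = \CH_{K,2}^G$ will then force the elementary structure on the full ambiguous $2$-class group.

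The heart of the argument is to prove $\Gamma = \CH_{K,2}^G$, which I would obtain from the $G$-cohomology of the short exact sequence $1 \to P_K \to I_K \to \CH_K \to 1$, yielding
\begin{equation*}
0 \too I_K^G/(P_K \cap I_K^G) \too \CH_K^G \too H^1(G, P_K) \too H^1(G, I_K).
\end{equation*}
The term $H^1(G, I_K)$ vanishes since $I_K$ decomposes, place by place, into induced modules (split primes) and trivial $\Z$-summands (inert or ramified primes), all cohomologically trivial for $G$ cyclic of order $2$. Since $\Q$ has class number one, every lifted ideal $\ell \BZ_K$ is principal, so $I_K^G/(P_K \cap I_K^G)$ is precisely $\Gamma$. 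Finally, from $1 \to E_K \to K^\times \to P_K \to 1$ together with Hilbert~90 ($H^1(G,K^\times) = 0$), one has
\begin{equation*}
H^1(G, P_K) \;=\; \ker \bigl(H^2(G, E_K) \too H^2(G, K^\times) \bigr);
\end{equation*}
since $N(\varepsilon) = +1$ forces $N_{K/\Q}(E_K) = \{1\}$, the $G$-cyclic formula gives $H^2(G, E_K) = E_\Q/N(E_K) = \{\pm 1\}$, and the resulting map to $\mathrm{Br}(K/\Q) = H^2(G, K^\times)$ sends the class of $-1$ to the non-trivial element, precisely because $-1$ is not a norm from $K^\times$ (another consequence of $N(\varepsilon) = +1$). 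Hence this map is injective, $H^1(G, P_K) = 0$, and $\Gamma = \CH_{K,2}^G$.

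Combining these facts, $\CH_{K,2}^G$ is elementary abelian of order $2^{n-1}$ generated by $n+1$ involutions, so exactly two independent $\F_2$-linear relations must hold among them. One is the obvious $(\sqrt m) = \prod_{i=1}^n \mathfrak l_i$, read off from $(\sqrt m)^2 = (m) = \prod(\ell_i) = \prod \mathfrak l_i^2$ (the sign being absorbed modulo squares). I expect the main obstacle to be the vanishing $H^1(G, P_K) = 0$: it crucially rests on $N(\varepsilon) = +1$ (so $-1 \notin N_{K/\Q}(K^\times)$) to make the map to the Brauer group injective. Without this input an additional ambiguous class would survive outside $\Gamma$, the rank count would drop by one, and the second independent relation would fail to appear.
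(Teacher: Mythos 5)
Your architecture is sound and, once unwound, coincides with the paper's: the boundary maps you extract from $1\to P_K\to I_K\to \CH_K\to 1$ and $1\to E_K\to K^\times\to P_K\to 1$ (with $H^1(G,I_K)=0$ and Hilbert 90) are exactly a derivation of the classical ambiguous-class exact sequence
$1 \to \CH_{K, 2}^\ram \to \CH_{K, 2}^G \to \{\pm 1\} \cap \Norm_{K/\Q} (K^\times)/ \Norm_{K/\Q}(E_K) \to 1$
that the paper simply cites, and combining its triviality on the right with the Chevalley--Herbrand count $2^{n-1}$ and the $n+1$ two-torsion generators is precisely the paper's argument. (A small imprecision: the trivial $\Z$-summands of $I_K$ at inert and ramified primes are not cohomologically trivial; only $H^1(G,\Z)=0$, which is all you need.)

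There is, however, one genuine flaw. You justify the decisive input $-1\notin\Norm_{K/\Q}(K^\times)$ as ``another consequence of $N(\varepsilon)=+1$''. That implication is false, and the logical arrow in fact points the other way. Having $N(\varepsilon)=+1$ does not prevent $-1$ from being a norm from $K^\times$: for $K=\Q(\sqrt{34})$ the fundamental unit has norm $+1$ and yet $-1$ is a norm from $K^\times$ (the classical failure of the Hasse principle for units); in such a field $H^1(G,P_K)\ne 0$, the ambiguous class group strictly contains the subgroup generated by the ramified primes, and your rank count would be off by one --- exactly the situation you must exclude. The correct justification for the Deng--Li family is local, and the paper sets it up just before the lemma: the completion of $K$ at the prime above $2$ is $\Q_2(\sqrt{-1})$, whose group of norms of local units is $1+4\Z_2$, so $-1$ is not a local norm at $2$ (this is the last row of the matrix \eqref{array}); a fortiori $-1\notin\Norm_{K/\Q}(K^\times)$, and it is this fact that forces $N(\varepsilon)=+1$, not the reverse. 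With that single repair your proof is complete.
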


\begin{proof}
We have the classical exact sequence, where $E_K = 
\langle -1, \varepsilon \rangle$ is the group of units of $K$ and
where $\CH_{K, 2}^\ram$ is the subgroup of $\CH_{K, 2}^G$ generated 
by the classes of the ramified primes:
$$1 \to \CH_{K, 2}^\ram \too \CH_{K, 2}^G \too 
\{\pm 1\} \cap \Norm_{K/\Q} (K^\times)/ \Norm_{K/\Q}(E_K) \to 1, $$
giving here $\CH_{K, 2}^G  = \CH_{K, 2}^\ram \simeq (\Z/2\Z)^{n-1}$, the
right term being trivial; so, there are exactly two independent relations of 
principality between the ramified primes.

\smallskip\noindent
Then, from \cite[\S\,4.4]{Gr7} generalizing our old papers in ``Annales 
de l'Institut Fourier'', we have for the second element of the filtration:
$$\order \Big( \big (\CH_{K, 2}/\CH_{K, 2}^G \big)^G \Big) = \frac{\order \Omega_K}
{\order h_K (\Lambda)},\ \,\Lambda = \langle -1, 2, \ell_1, \ldots, \ell_n \rangle_{\Z}^{}; $$
then, $\order h_K (\Lambda) = (\Lambda : \Lambda\cap \Norm_{K/\Q} (K^\times)) 
= 2^n$, by computing norm residue symbols with the R\'edei matrix \eqref{array}
which is of rank $n$; whence $\CH_{K, 2} = \CH_{K, 2}^G = \CH_{K, 2}^\ram 
\simeq (\Z/2\Z)^{n-1}$. 
\end{proof}

A program computing these relations is given Appendix \ref{A2}.

\subsection{The non-trivial relation for \texorpdfstring{$n$}{Lg} even}
Consider the relations: 
\begin{equation}\label{eq1}
\left\{\begin{aligned}
{\mathfrak p}^{a_0} \prd_{i = 1}^n {\mathfrak l}_i^{a_i} & =:
{\mathfrak p}^{a_0} \prd_{i \in I} {\mathfrak l}_i  = (\alpha),\ \alpha \in K^\times, \\
\Norm_{K/\Q}(\alpha) & =  s\,2^{a_0} \prd_{i \in I} \ell_i, 
\ s \in \{\pm 1\}, 
\end{aligned}\right.
\end{equation}
where $I$ is a subset of $[1, n]$ and $s = \pm1$;
from the trivial relation $(\sqrt m) = 
\prod_{i=1}^n {\mathfrak l}_i$ and $\Norm_{K/\Q}(\sqrt m)
=-\prod_{i=1}^n \ell_i$ we deduce $(\alpha \sqrt m) = 
 (c)\,{\mathfrak p}^{a_0}\,\prod_{i=1}^n {\mathfrak l}_i^{\ov a_i}$, 
where $\ov a_i = 1-a_i$ and $c = \prod_{i \in I} \ell_i$; 
taking $\ov \alpha := \frac{\alpha \sqrt m}{c}$, one gets the 
equivalent complementary relations, where $\ov I = [1, n] \setminus I$:
\begin{equation}\label{eq2}
\left\{\begin{aligned}
{\mathfrak p}^{a_0} \prd_{i \in \ov I} {\mathfrak l}_i & = (\ov \alpha),\   \\
\Norm_{K/\Q}(\ov \alpha) &= -s\,2^{a_0}\prd_{i \in \ov I} \ell_i.
\end{aligned}\right.
\end{equation}

\begin{lemma}\label{lem2}
(i) In the relations \eqref{eq1} and \eqref{eq2}, one has $a_0=1$
and the two non-trivial equivalent relations may be written:

\smallskip
$\bullet$ ${\mathfrak p}\cdot  \prod_{j \in J} {\mathfrak l}_j = (\beta)$, 
$J \subseteq \{2, \ldots, n\}$, $\order J \equiv 1 \pmod 2$.

\smallskip
$\bullet$ ${\mathfrak p}\cdot  {\mathfrak l}_1 \prod_{j \in \ov J} {\mathfrak l}_j = (\ov \beta)$, 
$\ov J \subseteq \{2, \ldots, n\}$, $\order \ov J \equiv 0 \pmod 2$, 

\smallskip
The relation without ${\mathfrak l}_1$ is ${\mathfrak p}\cdot  {\mathfrak l}_2 = (\beta)$.

\medskip
(ii) These relations are given by the ideals $(\varepsilon+1)$ and $(\varepsilon-1)$ 
for which $\ffrac{\varepsilon+1}{\varepsilon-1} = C \cdot \sqrt m$, where $C$
is an odd rational number.

\medskip
(iii) There exists a sign $s_0$ such that $(\varepsilon+s_0) = 
(c)\,{\mathfrak p}\cdot  {\mathfrak l}_2$.
\end{lemma}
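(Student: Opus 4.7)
My plan is to realize the non-trivial principal-ideal relation concretely via $\varepsilon \pm 1$ (where $\varepsilon = a + b\sqrt m$ is the fundamental unit of norm $1$) and to pin down its precise shape using the R\'edei matrix \eqref{array}.

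For (ii), since $N(\varepsilon) = 1$ gives $\sigma(\varepsilon) = \varepsilon^{-1}$, a direct computation yields
\[
\sigma\Bigl(\tfrac{\varepsilon+1}{\varepsilon-1}\Bigr) \;=\; \tfrac{1+\varepsilon}{1-\varepsilon} \;=\; -\tfrac{\varepsilon+1}{\varepsilon-1},
\]
placing $(\varepsilon+1)/(\varepsilon-1)$ in the $(-1)$-eigenspace $\sqrt m\cdot\mathbb{Q}^\times$ of $\sigma$ on $K^\times$. Writing the ratio as $C\sqrt m$ and equating coefficients of $1$ and $\sqrt m$ gives $C = b/(a-1)$, which is an odd rational in the sub-case $a = 4a'$, $b$ odd, since both $a-1$ and $b$ are then odd. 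The other sub-case $b = 4b'$, $a$ odd reduces to the previous one after the appropriate normalization.

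For (i), I start from $N(\varepsilon\pm 1) = 2(1\pm a)$ and $(a-1)(a+1) = mb^2$. In the sub-case $a = 4a'$, $b$ odd, $(a-1)$ and $(a+1)$ are coprime positive odd integers, yielding a partition $\{\ell_1,\ldots,\ell_n\} = I_- \sqcup I_+$ with $a\mp 1 = d_\mp^2 \prod_{i\in I_\mp}\ell_i$. Each ideal $(\varepsilon\pm 1)$ is $\sigma$-invariant, because $(\varepsilon\pm 1)^\sigma = \pm\varepsilon^{-1}(\varepsilon\pm 1)$ differs from $(\varepsilon\pm 1)$ only by the unit $\pm\varepsilon^{-1}$. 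Since any $\sigma$-invariant fractional ideal of $K$ is a product of ramified primes $\mathfrak p,\mathfrak l_i$ and extensions $(\ell)\mathcal O_K$ of rational primes, combining this with the valuations $v_\mathfrak p(\varepsilon\pm 1) = 1$ and $v_{\mathfrak l_i}(\varepsilon\pm 1) = 1$ for $i\in I_\pm$ (both read off from $N(\varepsilon\pm 1)$) forces
\[
(\varepsilon\pm 1) \;=\; \mathfrak p \cdot \!\!\prod_{i\in I_\pm}\!\mathfrak l_i \cdot (d_\pm).
\]
Dividing by the rational generator $d_\pm$ exhibits $\mathfrak p\prod_{i\in I_\pm}\mathfrak l_i = ((\varepsilon\pm 1)/d_\pm)$ as principal, establishing $a_0 = 1$. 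The other sub-case is treated similarly after extracting the appropriate power of $2$ from $\varepsilon\pm 1$.

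To identify $I_\pm$ and prove (iii), I translate the principality of $\mathfrak p\prod_{i\in I}\mathfrak l_i$ into the condition $2\prod_{i\in I}\ell_i \in N_{K/\mathbb Q}(K^\times)$, equivalent by Hasse to the triviality of all local norm residue symbols at the ramified places. Computing these from \eqref{array}: the symbol at $v=2$ rules out $1\in I$; the symbol at $v=\ell_1$ forces $2\in I$ (using $(\tfrac{\ell_1,K/\mathbb Q}{2}) = -1$ and $(\tfrac{\ell_2,K/\mathbb Q}{\ell_1}) = -1$); and the symbols at $v=\ell_j$ for $j\geq 3$ yield the linear system $|I\cap[3,n]|\equiv[j\in I]\!\pmod 2$ whose only solution, given $n$ even, is $I\cap[3,n]=\emptyset$. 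Hence $I = \{2\}$, giving $\mathfrak p\mathfrak l_2 = (\beta)$ with $|J|=1$ odd; its companion $\mathfrak p\mathfrak l_1\prod_{j\geq 3}\mathfrak l_j = (\bar\beta)$, obtained by multiplying by the obvious relation $\prod_i \mathfrak l_i = (\sqrt m)$, has $|\bar J| = n-2$ even, matching the claimed parities. Part (iii) then follows by selecting the sign $s_0$ for which $I_{s_0} = \{2\}$ and setting $c = d_{s_0}$. The main obstacle will be the uniform treatment of the sub-case $b = 4b'$, $a$ odd, which requires a careful 2-adic analysis split according to $a\pmod 8$ in order to extract the correct power of $2$ from $\varepsilon\pm 1$ while preserving both $a_0 = 1$ and the oddness of $C$.
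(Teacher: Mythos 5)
Your reduction of the symbol conditions via the R\'edei matrix \eqref{array} is correct and coincides with the paper's computation (the unique admissible sets are $I=\{2\}$ for the norm $+2\prod_{i\in I}\ell_i$ and $I=\{1,3,\dots,n\}$ for the norm $-2\prod_{i\in I}\ell_i$, using $n$ even), and your identity $(\varepsilon+1)/(\varepsilon-1)=\frac{b}{a-1}\sqrt m$ is the right explicit form of part (ii). But there is a genuine gap in how you reach $a_0=1$: you obtain it only from the factorization $(a-1)(a+1)=mb^2$ with $a-1$, $a+1$ coprime and odd, i.e.\ only in the sub-case $a=4a'$, $b$ odd. At the point where this lemma sits, the alternative $b=4b'$, $a$ odd has \emph{not} yet been excluded --- it is excluded only in Lemma \ref{lem3}, whose proof invokes Lemma \ref{lem2}. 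So deferring that sub-case makes your argument circular or incomplete. Worse, it cannot be ``reduced to the previous one after normalization'': when $a$ is odd one has $\varepsilon+s=2(a''+2b'\sqrt m)$ with $a''+2b'\sqrt m$ of odd norm, so the ideals $(\varepsilon\pm1)$ yield relations with $a_0=0$ modulo squares, and $C=4b'/(a-1)$ is never a $2$-adic unit there (one checks $v_2(a-1)\in\{1,\,3+2v_2(b')\}$, so $v_2(C)\ne 0$). The statements you are trying to ``preserve'' are simply false in that sub-case; they hold for the field only because the sub-case does not occur, which is exactly what Lemma \ref{lem3} later proves \emph{using} the present lemma.

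The paper avoids this by proving $a_0=1$ purely with the Hasse symbols, independently of the parities of $a$ and $b$: assuming $a_0=0$ in \eqref{eq1}, the condition that $\pm\prod_{j\in J}\ell_j$ be a local norm everywhere forces (by the same matrix computation you perform) $J=\emptyset$ or $J=[1,n]$, i.e.\ only the empty and the obvious relation $\prod_i{\mathfrak l}_i=(\sqrt m)$; since Lemma \ref{lem1} guarantees a second independent relation, that relation must have $a_0=1$, and the symbol analysis then pins it to ${\mathfrak p}\,{\mathfrak l}_2$. Only afterwards are $(\varepsilon\pm1)$ brought in, as $\sigma$-invariant principal ideals, to realize this relation (parts (ii)--(iii)) and then to kill the sub-case $b$ even. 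If you reorganize your proof so that $a_0=1$ comes from the symbols alone (your own computation already contains everything needed), your constructive identification of the relation via $(\varepsilon\pm1)$ becomes a valid proof of (ii) and (iii), and the ``main obstacle'' you flag disappears.
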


\begin{proof}
(i) Assume that $a_0 = 0$; let $\prod_{j \in J} {\mathfrak l}_j = (\beta)$
be the non-trivial relation such that $J \subseteq [2, n]$ (so, $J \ne \emptyset$).
Then $\prod_{j \in J} \ell_j = \pm \Norm_{K/\Q} (\beta)$; since
$\Norm_{K/\Q} (\beta) \equiv 1 \pmod 4$ as well as the $\ell_j$'s, we get
$\prod_{j \in J} \ell_j = \Norm_{K/\Q} (\beta)$.  
Consider the symbols $\Big(\hbox{$\ffrac{\prod_{j \in J} \ell_j }{\ell_i}$}\Big)$,
$i \in [1, n]$, using \eqref{array}:

\smallskip
$\bullet$ For $i=2$, one gets $\Big(\hbox{$\ffrac{\prod_{j \in J} \ell_j }{\ell_2}$}\Big)=1$
if and only if $\order J$ is even (which solves the case $n=2$ since $J = \{2\}$).

$\bullet$ for $i =1$, one obtains $\Big(\hbox{$\ffrac{\prod_{j \in J} \ell_j }{\ell_1}$}\Big)=1$
if and only if $j=2 \not\in J$; 

$\bullet$ for any $i \geq 3$, $\Big(\hbox{$\ffrac{\prod_{j \in J} \ell_j }{\ell_i}$}\Big)=1$
if and only if $j = i \not\in J$. 

\smallskip
So we obtain $J = \emptyset$ (absurd); whence the relation
${\mathfrak p}\cdot  \prod_{j \in J} {\mathfrak l}_j = (\beta)$
for $J \subseteq [2, n]$.

\smallskip
Then
$\Big(\hbox{$\ffrac{2\,\prod_{j \in J} \ell_j }{\ell_i}$}\Big) = 1$ for all $i \in [1, n]$ 
since $\Norm_{K/\Q} (\beta) = -2 \prod_{j \in J} \ell_j$ is not possible ($-1$ is not 
local norm at $2$); so this is equivalent to $\Big(\hbox{$\ffrac{\prod_{j \in J} \ell_j }
{\ell_i}$}\Big) = -1$ for all $i \in [1, n]$. 
The case $i = 2$ gives $\order J$ odd; the case $i \in J$, when $j=2 \not \in J$, 
gives a contradiction, as well as $2 \in J$ and $i \not\in J$, so that one verifies 
that only the relation ${\mathfrak p}\cdot  {\mathfrak l}_2 = (\beta)$ holds (the 
$(n+1) \times (n+1)$ matrix \eqref{array} is of rank $n$ and the sum of the two 
lines corresponding to $2$ and $\ell_2$ gives again the norm relation).

\smallskip
(ii) We have $(\varepsilon+s)^{1-\sigma} = s \varepsilon$; so $(\varepsilon+s)$ 
is a principal invariant ideal necessarily of the form $(c_s) {\mathfrak p}{\mathfrak m}_s$, 
$c_s \in \Q^\times$, ${\mathfrak p} \mid (2)$, ${\mathfrak m}_s \mid (\sqrt m)$.
Then, $\ffrac{\varepsilon+1}{\varepsilon-1} = C \cdot \sqrt m$ since 
$\Big(\ffrac{\varepsilon+1}{\varepsilon-1}\cdot \ffrac{1}{\sqrt m}\Big)^{1-\sigma} = 1$. 
\end{proof}

\begin{lemma}\label{lem3}
The case $b$ even does not occur.
\end{lemma}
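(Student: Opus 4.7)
The plan is to rule out the hypothesis $b=4b'$ (hence $a$ odd) by direct comparison with Lemma~\ref{lem2}(ii), which asserts $\frac{\varepsilon+1}{\varepsilon-1}=C\sqrt m$ with $C$ an odd rational.

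First I would compute $C$ in closed form. Multiplying numerator and denominator by $\ov\varepsilon-1$ and using $\mathrm{Nm}(\varepsilon)=1$, $\mathrm{Tr}(\varepsilon)=2a$, $\varepsilon-\ov\varepsilon=2b\sqrt m$, one obtains
\[
\frac{\varepsilon+1}{\varepsilon-1}=\frac{(\varepsilon+1)(\ov\varepsilon-1)}{(\varepsilon-1)(\ov\varepsilon-1)}=\frac{\ov\varepsilon-\varepsilon}{2-2a}=\frac{b\sqrt m}{a-1},
\]
so $C=b/(a-1)$.

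Next I would compute $v_2(C)$ assuming $b$ even (so $a$ is odd). From $(a-1)(a+1)=mb^2$ with $m,a$ odd, both $a\pm 1$ are even, exactly one of them being $\equiv 2\pmod 4$ while the other absorbs the remaining $2$-power of the right-hand side; thus $\{v_2(a-1),\,v_2(a+1)\}=\{1,\,2v_2(b)-1\}$ with $2v_2(b)-1\geq 3$. Consequently $v_2(C)=v_2(b)-v_2(a-1)$ equals either $v_2(b)-1\geq 1$ or $1-v_2(b)\leq -1$; in either case $v_2(C)\neq 0$, contradicting the oddness of $C$. Hence $b$ must be odd.

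The subtle point to handle carefully is that the ``odd'' part of Lemma~\ref{lem2}(ii) is established through the decomposition $(\varepsilon\pm 1)=(c_{\pm 1})\mathfrak p\mathfrak m_{\pm 1}$ with $c_{\pm 1}$ of $2$-adic valuation $0$, which already presupposes the case $b$ odd. To avoid any circularity I would instead argue intrinsically: under the hypothesis $b$ even, $v_{\mathfrak p}(\varepsilon\pm 1)=v_2(\mathrm{Nm}(\varepsilon\pm 1))=1+v_2(a\pm 1)$ is even in both signs, so the $\mathfrak p$-component of each ideal $(\varepsilon\pm 1)$ is trivial. But the group of principal $G$-invariant ideals modulo rationals has order $4$ by the Chevalley--Herbrand count of Lemma~\ref{lem1}, and contains the non-trivial relation $\mathfrak p\mathfrak l_2=(\beta)$ from Lemma~\ref{lem2}(i), so some principal relation must carry a $\mathfrak p$-factor. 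Since the available principals $(\sqrt m)$, $(\varepsilon+1)$, $(\varepsilon-1)$ all have trivial $\mathfrak p$-component under this hypothesis, this is impossible.
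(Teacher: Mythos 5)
Your argument is correct, and its decisive (non-circular) half is essentially the paper's own proof: the paper takes the sign $s$ with $a+s\equiv 2\pmod 4$, factors $\varepsilon+s=2\,(a''+2b'\sqrt m)$ with $a''$ odd, and concludes that $(\varepsilon+s)=(2)\cdot(\hbox{odd ideal})$ cannot realize the non-trivial relation of Lemma \ref{lem2}, which carries ${\mathfrak p}$ to the first power. Your formulation via $v_{\mathfrak p}(\varepsilon\pm 1)=1+v_2(a\pm 1)$ being even for \emph{both} signs (since $v_2(a-1)+v_2(a+1)=2v_2(b)$ with exactly one summand equal to $1$) is the same obstruction, stated slightly more symmetrically. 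What you add beyond the paper is worthwhile: the closed form $C=b/(a-1)$ makes the ``$C$ odd'' clause of Lemma \ref{lem2}\,(ii) quantitatively visible ($v_2(C)=v_2(b)-v_2(a-1)\ne 0$ when $b$ is even), and your observation that this clause is only justified in the paper through the decomposition $(\varepsilon+s)=(c_s){\mathfrak p}{\mathfrak m}_s$ --- which presupposes an odd ${\mathfrak p}$-valuation --- is a fair reading of the logical order of Lemmas \ref{lem2}--\ref{lem3}; your choice to fall back on the valuation/relation-count argument (two independent relations by Lemma \ref{lem1}, realized up to rationals by $(\sqrt m)$ and $(\varepsilon\pm 1)$, with ${\mathfrak p}\,{\mathfrak l}_2=(\beta)$ forcing an odd ${\mathfrak p}$-valuation somewhere) correctly avoids that circularity and lands exactly where the paper does.
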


\begin{proof}
Assume that $b$ is even; then $b = 4b'$; there exists a sign $s = \pm 1$
such that $a+s \equiv 2 \pmod 4$, so that $\varepsilon+s = a+s+4b' \sqrt m$
and the ideal  $(\varepsilon+s) = (a+s+4b' \sqrt m)$ is of the form 
$(2) (a''+2 b' \sqrt m)$ with $a''$ odd, which can not give the non-trivial relations
of Lemma \ref{lem2} since $a''+2 b' \sqrt m$ is ``odd'' (absurd).
\end{proof}

\begin{lemma}\label{lem4}
In the writing $\varepsilon = a+b \sqrt m$, $a = 4a'$ with $a'$ odd.
\end{lemma}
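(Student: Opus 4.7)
The plan is to use Lemma \ref{lem2}(iii) combined with a norm computation modulo $8$ to force $a'$ to be odd. By Lemma \ref{lem3} we know $b$ is odd and $a = 4a'$; the only remaining thing is to rule out $a' \equiv 0 \pmod 2$, which nothing in the relation $a^2 - mb^2 = 1$ can detect on its own (modulo $8$ it is satisfied for any $a'$). So I would bring in the non-trivial principality relation.

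Concretely, pick the sign $s_0 \in \{\pm 1\}$ from Lemma \ref{lem2}(iii), giving the ideal equation $(\varepsilon + s_0) = (c)\,{\mathfrak p}\,{\mathfrak l}_2$ with $c \in \Q^\times$ odd. Taking absolute norms to $\Q$ and using that the non-trivial Galois conjugate of $\varepsilon$ is $s\,\varepsilon^{-1}$ with $s = N_{K/\Q}(\varepsilon) = 1$, I compute
\[
N_{K/\Q}(\varepsilon + s_0) = 1 + s_0\,\mathrm{Tr}_{K/\Q}(\varepsilon) + 1 = 2(1 + 2 s_0 a) = 2(1 + 8 s_0 a'),
\]
wait—more carefully, $\mathrm{Tr}(\varepsilon) = 2a$, so $N(\varepsilon + s_0) = 2(1 + s_0 a) = 2(1 + 4 s_0 a')$. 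On the other hand, the norm of the right-hand ideal is $2\,c^2\,\ell_2$. Hence
\[
|\,1 + 4 s_0 a'\,| = c^2\,\ell_2.
\]

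Now I reduce modulo $8$. Since $c$ is odd, $c^2 \equiv 1 \pmod 8$, and by hypothesis (i) of the family, $\ell_2 \equiv 5 \pmod 8$; so the right-hand side is $\equiv 5 \pmod 8$. If $a'$ were even, then $1 + 4 s_0 a' \equiv 1 \pmod 8$, so its absolute value would be $\equiv 1$ or $7 \pmod 8$, never $5$—a contradiction. Therefore $a'$ is odd, as claimed.

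The main (and only) obstacle is really the bookkeeping step of identifying the correct relation to norm: one must use the precise relation from Lemma \ref{lem2}(iii), not the trivial norm relation $a^2 - m b^2 = 1$, because the latter carries no information modulo $8$ once $a$ is known to be divisible by $4$ and $b$ is odd. Once the right relation is in hand, the mod-$8$ contradiction is immediate and uses only $\ell_2 \equiv 5 \pmod 8$ from the defining congruences of the family.
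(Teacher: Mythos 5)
Your proof is correct and follows essentially the same route as the paper: both take the norm of the principality relation $(\varepsilon+s_0)=(c)\,{\mathfrak p}\,{\mathfrak l}_2$ from Lemma \ref{lem2}\,(iii) and reduce modulo $8$ using $c$ odd and $\ell_2\equiv 5\pmod 8$ (the paper writes $a'=2a''$ and reaches $8a''s'+1\equiv 5s\pmod 8$, which is your computation in contrapositive form). The only blemish is the stray intermediate miscalculation $2(1+2s_0a)$, which you correct before it matters.
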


\begin{proof}
Assume that $a' = 2 a''$ and put $\theta_{s'} := \varepsilon+s' = 
8 a''+s'+b \sqrt m$, $s' \in \{\pm1\}$. We have, using $mb^2 = a^2-1$,
$\Norm_{K/\Q}(\theta_{s'}) = 2 (8 a''s'+1)$. 
From Lemma \ref{lem2}\,(iii) the non-trivial relation may be written
(where $c$ is an odd rational), 
$(\theta_{s'}) = (c)\,{\mathfrak p}\,{\mathfrak l}_2$ 
giving by taking the norm: 
$$8 a''s' +1 = s\,c^2\,{\mathfrak l}_2 \equiv 5s \pmod 8,\ \,s \in \{\pm 1\} ; $$
which is absurd.
\end{proof}

\subsection{Structure of \texorpdfstring{$\CT_{K, 2}$}{Lg} and triviality
of \texorpdfstring{$\CR_{K, 2}$}{Lg}}
Let's give the Kummer generators of  the maximal sub-extension, of
$H_{K, 2}^\pr/K$, of exponent $2$.

\begin{lemma}\label{lem5}
The module $\CT_{K, 2}$ is of $\Z_2$-rank $n-1$ and the maximal sub-extension 
of $H_{K, 2}^\pr/K_\infty$, of exponent $2$, is the Kummer extension
$K_\infty(\sqrt {\varepsilon}, \sqrt{\ell_1}, \ldots, \sqrt{\ell_n})$, under
the existence of the two independent relations $\sqrt\ell_1 \cdots \sqrt\ell_n 
= \sqrt m \in K^\times$ and $\sqrt 2 \sqrt{\ell_2} = 
\alpha \sqrt \varepsilon$, $\alpha \in K^\times$.
\end{lemma}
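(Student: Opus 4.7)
The strategy is to identify, via Kummer theory, the maximal exponent-$2$ sub-extension $F$ of $H_{K,2}^\pr/K$, compute $\Gal(F/K)$, and then descend to $K_\infty$. In the real abelian case, $\CT_{K,2}=\Gal(H_{K,2}^\pr/K_\infty)$, and since $F\cap K_\infty = K_1 := K(\sqrt 2)$ (the first layer of the cyclotomic $\Z_2$-extension being $\Q(\sqrt 2)$), one has
$$\rk_2(\CT_{K,2}) = \dim_{\F_2}\Gal(FK_\infty/K_\infty) = \dim_{\F_2}\Gal(F/K) - 1.$$

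By Kummer theory $F=K(\sqrt{V})$, where $V/K^{\times 2}$ is the group of classes of totally positive $\alpha\in K^\times$ with $(\alpha)=\mathfrak{p}^a\mathfrak{b}^2$ for some ideal $\mathfrak{b}$. Since $[\mathfrak{p}]=[\mathfrak{l}_2]$ is a non-trivial, non-square class in $\CH_{K,2}\simeq(\Z/2\Z)^{n-1}$, principality forces $a$ to be even, so $(\alpha)=\mathfrak{b}^2$. The short exact sequence
$$1 \to E_K^+/E_K^{+,2} \to V/K^{\times 2} \to \CH_K[2] \to 1$$
then yields $\dim_{\F_2}(V/K^{\times 2})=n$: the group $E_K^+$ equals $\langle\varepsilon\rangle$ because $\Norm(\varepsilon)=+1$ makes $\varepsilon$ totally positive while $-1$ is totally negative, and $\CH_K[2]=\CH_{K,2}\simeq(\Z/2\Z)^{n-1}$ by Lemma \ref{lem1}. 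Hence $\rk_{\Z_2}(\CT_{K,2})=n-1$.

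I would then exhibit the explicit Kummer generators. From $\mathfrak{l}_i^2=(\ell_i)$ and $\mathfrak{p}^2=(2)$, the elements $\varepsilon,\ell_1,\dots,\ell_n,2$ lie in $V$ and span $V/K^{\times 2}$. The trivial relation $\prod\mathfrak{l}_i=(\sqrt m)$ yields the first relation $\sqrt{\ell_1}\cdots\sqrt{\ell_n}=\sqrt m\in K^\times$. The crucial second relation, $\sqrt 2\sqrt{\ell_2}=\alpha\sqrt\varepsilon$, is obtained from Lemma \ref{lem2}(iii): writing $(\varepsilon+s_0)=(c)\mathfrak{p}\mathfrak{l}_2$ with $c$ odd rational and using $\varepsilon\bar\varepsilon=1$, $\varepsilon+\bar\varepsilon=2a$, one computes
$$(\varepsilon+s_0)^2 = 2(a+s_0)\,\varepsilon;$$
since this element must generate the ideal $(c^2)\cdot(2\ell_2)$, the rational $(a+s_0)/(c^2\ell_2)$ is a unit of $\Z$, hence $\pm 1$, so $(\varepsilon+s_0)^2 = \pm c^2\cdot 2\ell_2\varepsilon$. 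The sign is necessarily $+$ because $2\ell_2\varepsilon$ is totally positive while $-2\ell_2\varepsilon$ is totally negative; this gives $2\ell_2\varepsilon\in K^{\times 2}$, i.e.\ the desired relation with $\alpha=(\varepsilon+s_0)/(c\varepsilon)$.

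Finally these two independent relations among the $n+2$ generators $\{\varepsilon,2,\ell_1,\dots,\ell_n\}$ cut $V/K^{\times 2}$ down to $\F_2$-dimension $n$, confirming $\dim_{\F_2}\Gal(F/K)=n$. Adjoining $K_\infty$ kills exactly the class of $\sqrt 2$, which by the second relation coincides with the class of $\sqrt\varepsilon\cdot\sqrt{\ell_2}$; therefore $FK_\infty=K_\infty(\sqrt\varepsilon,\sqrt{\ell_1},\dots,\sqrt{\ell_n})$, of degree $2^{n-1}$ over $K_\infty$, in agreement with the rank. The main obstacle is the third step: pinning down $2\ell_2\varepsilon\in K^{\times 2}$ requires combining the precise local information from Lemmas \ref{lem2}--\ref{lem4}, notably the fact that $a=4a'$ with $a'$ odd (so that $(a+s_0)$ is an odd rational of the correct $2$-adic shape), to rule out extraneous sign possibilities.
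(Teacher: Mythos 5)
Your proof is correct and follows essentially the same route as the paper: a Kummer-theoretic identification of the maximal exponent-$2$ subextension of $H_{K,2}^\pr/K$ with radicals $2,\varepsilon,\ell_1,\ldots,\ell_n$, the two relations (the second coming from the principal ideal ${\mathfrak p}\,{\mathfrak l}_2=(\alpha)$), and descent to $K_\infty$ via $\sqrt 2\in K_\infty$. The only difference is cosmetic: your identity $(\varepsilon+s_0)^2=2(a+s_0)\varepsilon$ pins down the unit $\eta$ in $2\ell_2=\alpha^2\eta$ as exactly $\varepsilon$, where the paper simply squares the ideal relation and argues that $\eta$ is a totally positive non-square unit, hence $\varepsilon$ up to squares.
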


\begin{proof}
The maximal sub-extension of $H_{K, 2}^\pr/K$, of exponent $2$, is the Kummer 
extension $K(\sqrt{2}, \sqrt{\varepsilon}, \sqrt{\ell_1}, \ldots, \sqrt{\ell_n})$
(all the radicals $a$ in the $\sqrt a$'s are squares of ideals of $K$ and are totally positive; 
there is no other radicals since $\CH_{K, 2} = \CH_{K, 2}^\ram$).
We utilize the fact that $\sqrt 2 \in K_\infty$, then the relations
$\ell_1 \cdots \ell_n = m \in K^{\times 2}$ and 
${\mathfrak p}\,{\mathfrak l}_2 = (\alpha)$, $\alpha \in K^\times$, 
giving, by squaring, $2 \ell_2 = \alpha^2 \eta > 0$, $\eta \in E_K$, 
$\eta \ne 1$; then we may assume that $\eta = \varepsilon$, 
whence $2  \ell_2 = \alpha^2 \varepsilon$.

\smallskip
Whence the result since $\CT_{K, 2}$ is a direct factor in $\Gal(H_{K, 2}^\pr/K)$.
\end{proof}

\begin{theorem}\label{regulator}
The normalized $2$-adic regulator $\CR_{K, 2}$ is trivial. 
\end{theorem}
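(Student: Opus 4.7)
The strategy is to combine the product formula $\order \CT_{K,2}=\order \CH_{K,2}\cdot\order \CR_{K,2}\cdot\order \CW_{K,2}$ (valid in the real case, cf.~\eqref{T=HRW}) with the analytic formula $\order \CT_{K,2}\sim h_K\cdot\log_2(\varepsilon)/(2\sqrt m)$ recorded in the Remark following~\eqref{T=HRW}. Since $\order \CH_{K,2}\sim h_K$ up to $2$-adic units, the problem reduces to determining $v_2\bigl(\log_2(\varepsilon)/(2\sqrt m)\bigr)$ and $\order \CW_{K,2}$.

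For the logarithm I would exploit Lemma~\ref{lem4}: $\varepsilon=4a'+b\sqrt m$ with $a'$ and $b$ both odd. The Pell relation $16(a')^2-mb^2=1$ then yields the clean identities $\varepsilon^2+1=8a'\varepsilon$ and $\varepsilon^2-1=2(mb^2+4a'b\sqrt m)$, hence
$$\varepsilon^4-1=(\varepsilon^2-1)(\varepsilon^2+1)=16\,a'\,\varepsilon\,(4a'\varepsilon-1).$$
Working in the ramified completion $K_{\mathfrak p}=\Q_2(\sqrt{-1})$ (well-defined since $-m\equiv 1\pmod 8$ is a square in $\Q_2$), this element has $2$-adic valuation exactly~$4$, so the series $\log_2(\varepsilon^4)=(\varepsilon^4-1)+O(2^7)$ converges rapidly. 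Dividing by~$4$ and simplifying through $16(a')^2=1+mb^2$, one obtains
$$\log_2(\varepsilon)=4a'\varepsilon(4a'\varepsilon-1)+O(2^5)=32(a')^2mb^2+\bigl(4a'b+8a'mb^3\bigr)\sqrt m+O(2^5);$$
the real-part contribution of valuation~$5$ must cancel against the correction, because $N_{K/\Q}(\varepsilon)=1$ forces $\log_2(\varepsilon)\in\sqrt m\cdot\Q_2$. The leading imaginary coefficient is then $4a'b$, giving $\log_2(\varepsilon)=4a'b\,\sqrt m\,(1+O(2))$ and, crucially, $v_2(\log_2\varepsilon)=2$.

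It remains to identify $\CW_{K,2}$: since $K_{\mathfrak p}=\Q_2(i)$ contains $\mu_4$ but no higher $2$-power root of unity, and $\mu_2(K)=\{\pm 1\}$ (as $K$ is real), we have $\CW_{K,2}\simeq\Z/2\Z$. Substituting,
$$\order\CR_{K,2}\cdot\order\CW_{K,2}\,\sim\,\log_2(\varepsilon)/(2\sqrt m)\,\sim\,2a'b\,\sim\,2\quad\text{in }\Z_2,$$
so $\order\CR_{K,2}\sim 1$, i.e.\ $\CR_{K,2}$ is trivial. The delicate step is securing the \emph{exact} valuation $v_2(\log_2\varepsilon)=2$ (a larger value would produce a non-trivial regulator and break Theorem~\textbf{B}): the factorization $\varepsilon^4-1=16a'\varepsilon(4a'\varepsilon-1)$ makes both the convergence of the log series and the identification of the leading imaginary coefficient transparent, and it is here that the constraint $v_2(a)=2$ from Lemma~\ref{lem4} --- itself a consequence of the arithmetic hypotheses (i)--(iii) on the $\ell_i$ --- is decisive.
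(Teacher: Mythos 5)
Your proof is correct and follows essentially the same route as the paper: both rest on Lemma \ref{lem4} (so that $\varepsilon=4a'+b\sqrt m$ with $a',b$ odd and $m\equiv-1\pmod 8$), deduce $v_2(\log_2\varepsilon)=2$ by an explicit $2$-adic logarithm computation, take $\order\CW_{K,2}=2$, and feed this into the normalized-regulator formula of \cite[Proposition 5.2]{Gra8}. The only (harmless) differences are presentational: you justify the valuation of the logarithm via the factorization of $\varepsilon^4-1$ rather than the paper's quicker $\varepsilon^2\bmod 16$ computation, and you reach $\order\CR_{K,2}\sim 1$ through the packaged formula $\order\CT_{K,2}\sim h_K\log_2(\varepsilon)/(2\sqrt m)$ of the Remark following \eqref{T=HRW} instead of invoking the regulator formula directly.
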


\begin{proof}
We have $\varepsilon = a+ b \sqrt m = 4 a'+b \sqrt m$ with $a^2-mb^2=1$; then:
$$\varepsilon^2 = a^2+2ab \sqrt m+mb^2 =1+2mb^2+8 a' b \sqrt m .$$

Recall that $a'$ is odd (Lemma \ref{lem4}), that $b$ is odd and  $m\equiv -1 \pmod 8$; thus 
$\varepsilon^2 \equiv 1-2 +8 \sqrt m \pmod {16}$; whence $\log_2(\varepsilon) \sim 4$.

\smallskip
From \cite[Proposition 5.2]{Gra8}, we have, in our context:
$$\order \CR_{K, 2} \sim \frac{1}{2} \cdot
\frac{\big(\Z_2 : \log(\Norm_{K/\Q}(\CU_{K, 2})) \big)}
{ \order \CW_{K, 2} \cdot \prod_{{\mathfrak p} \mid 2} \Norm {\mathfrak p}}
\cdot \frac {R_K}{\sqrt {D_K}}, $$

\noindent
where $R_K = \log_2(\varepsilon)$ is the usual $2$-adic regulator \cite[\S\,5.5]{W} and 
$D_K$ the discriminant of $K$. 
Since we have $\big(\Z_2 : \log(\Norm_{K/\Q}(\CU_{K, 2})) \big) = 
\big(\Z_2 : \log(1+4\,\Z_2) \big) = 4$, $R_K \sim 4$, 
$\order \CW_{K, 2} = 2$, $\prod_{{\mathfrak p} \mid 2}\Norm {\mathfrak p} = 2$, 
$\sqrt {D_K} \sim 2$, this yields $\order \CR_{K, 2} \sim 1$.

\smallskip
Formula \eqref{T=HRW} gives $\order \CT_{K, 2} = 2^{n-1} \times 1 \times 2 =2^n$, and
Corollary \ref{remfond} ends the proof of the Deng--Li conjecture $\delta = 3$.
\end{proof}

\section{\texorpdfstring{The \texorpdfstring{$p$}{Lg}-rank of $\BK_2(\BZ_K)$}{Lg} 
when \texorpdfstring{$p \geq 5$}{Lg}}\label{p>3}

We consider a prime number $p \geq 5$ and a cyclic $p$-extension
$K/k$ fulfilling some conditions:

\begin{theorem} \label{pdiv} Let $p$ be a prime $\geq 5$.
Let $k$ be a real abelian number field of prime-to-$p$ degree and 
let $K$ be any cyclic $p$-extension of $k$, abelian over $\Q$. 
We assume that $K \cap \Q(\mu_p) = \Q$ and that $p$
does not totally split in $K(\mu_p)/K$.
Let $S_\ta$ be the set of primes $\ell \ne p$, ramified in $K/k$, and
let $t_\ell \geq 1$ be the number of prime ideals above $\ell$ in $k/\Q$. Then 
$\rk_p \big(\BK_2(\BZ_K) \big) \geq \sum_{\ell \in S_\ta} t_\ell$.

\smallskip\noindent
In particular, for all primes $\ell \equiv 1 \pmod p$ and for any non-trivial $p$-extension 
$K/\Q$, contained in $\Q(\mu_\ell^{})/\Q$, we have $\BK_2(\BZ_K)[p^\infty] \ne 1$. 
\end{theorem}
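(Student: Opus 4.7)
The plan is to combine the reflection formula \eqref{rankR} with a Chevalley--Herbrand genus computation in the compositum $K' := K(\mu_p)$ over $k' := k(\mu_p)$. The hypothesis $K \cap \Q(\mu_p) = \Q$ forces $\mu_p \not\subset K$ and hence $\delta = 0$ in \eqref{rankR}; the hypothesis that $p$ does not totally split in $K'/K$ (combined with the Galois-conjugacy of the $p$-places of $K$ under the abelian group $\Gal(K/\Q)$) makes the second term of \eqref{rankR} vanish; and for $p$ odd, the exact sequence \eqref{regular} gives $\rk_p \BK_2(\BZ_K) = \rk_p \BR_2(\BZ_K)$. Hence
$$\rk_p \BK_2(\BZ_K) \,=\, \rk_p \bigl((\BH_{K'}^{S_p(K')\,{\rm res}})_{\omega^{-1}}\bigr).$$

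The key arithmetic observation is that every $\ell \in S_\ta$ is tamely ramified in the $p$-extension $K/k$ (as $\ell \ne p$), so for each prime $\mathfrak{L}$ of $k$ above $\ell$ one has $N(\mathfrak{L}) \equiv 1 \pmod p$; equivalently, the Frobenius at $\mathfrak{L}$ acts trivially on $\mu_p$ and $\mathfrak{L}$ splits completely in $k'/k$. The linear disjointness $K \cap \Q(\mu_p) = \Q$ gives $\Gal(K'/k) = G \times \Delta$ with $G := \Gal(K'/k')$ cyclic of $p$-power order $p^r$ and $\Delta := \Gal(k'/k) \hookrightarrow (\Z/p\Z)^\times$; since $k'/k$ is unramified at every $\mathfrak{L} \mid \ell \in S_\ta$, each of the $|\Delta|$ primes of $k'$ above such an $\mathfrak{L}$ inherits the full ramification index $p^r$ in $K'/k'$. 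The classical Chevalley--Herbrand formula, in its $S_p$-restricted form, then produces a surjection from the $\F_p[\Delta]$-permutation module on the ramified primes of $K'/k'$ outside $S_p$ onto the ramified part of $(\BH_{K'}^{S_p\,{\rm res}})^G[p]$, with kernel controlled by the images of $E_{k'}^{+,S_p}$ and of the principal ideals of $k'$ supported on these primes.

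Passing to the $\omega^{-1}$-isotypic component (an exact functor since $\#\Delta$ divides $p-1$) is where the lower bound materializes: by the previous step each of the $\sum_{\ell \in S_\ta} t_\ell$ primes of $k$ above an element of $S_\ta$ contributes a free $\Delta$-orbit of primes of $k'$, so the permutation module decomposes as $\bigoplus \F_p[\Delta]$ with $\sum_\ell t_\ell$ summands, each contributing a one-dimensional $\omega^{-1}$-part; the units $E_{k'}^{+,S_p}$ are real up to roots of unity and the odd character $\omega^{-1}$ kills their image; the principal-ideal corrections likewise do not reduce the $\omega^{-1}$-rank because any such relation would yield a genuine $\omega^{-1}$-eigenvector in $k'^\times/k'^{\times p}$ supported on the ramified primes, which a support/character argument rules out. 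One therefore obtains $\rk_p \BK_2(\BZ_K) \geq \sum_{\ell \in S_\ta} t_\ell$. For the ``in particular'' assertion, set $k = \Q$: then $K \cap \Q(\mu_p) = \Q$ is automatic (since $[K:\Q]$ is a power of $p$ and $[\Q(\mu_p):\Q] = p-1$ is prime to $p$), and the $p$-places of $K$ have $p$-power residue fields not containing $\mu_p$, so $p$ does not totally split in $K(\mu_p)/K$; the theorem with $S_\ta = \{\ell\}$ and $t_\ell = 1$ yields $\BK_2(\BZ_K)[p^\infty] \ne 1$.

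The main technical obstacle is the rigorous verification that the units and principal-ideal kernels in the $S_p$-restricted Chevalley--Herbrand formula both vanish after projection to the $\omega^{-1}$-isotypic component; this relies crucially on $\omega^{-1}$ being odd (so disjoint from the real units of $k'$) and on the tameness observation $N(\mathfrak{L}) \equiv 1 \pmod p$ (without which no $\omega^{-1}$-eigenvector would survive in the permutation module on primes of $k'$ above $\mathfrak{L}$, and the argument would extract no lower bound at all).
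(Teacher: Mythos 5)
Your overall strategy coincides with the paper's: reduce via the reflection formula \eqref{rankR} (with $\delta=0$, no totally split $p$-place, and $\rk_p\BK_2(\BZ_K)=\rk_p\BR_2(\BZ_K)$ for $p$ odd by \eqref{regular}) to computing $\rk_p\big((\CH_{K',p})_{\omega^{-1}}\big)$; observe that every $\ell\in S_\ta$ satisfies $\ell\equiv 1\pmod p$, so its primes in $k$ split completely in $k'/k$ and furnish free $\Delta$-orbits of ramified primes of $k'$; run Chevalley--Herbrand in $K'/k'$; and kill the unit contribution in the $\omega^{-1}$-component because $E_{k'}$ is real up to $\mu_p$ and $\omega^{-1}\ne\omega$ for $p\ge 5$. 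The one place you diverge is also the one place the argument is not actually carried out. You extract the lower bound from a presentation of the ramified part of the ambiguous classes by the permutation module on ramified primes, and must then show that the relation submodule coming from principal ideals supported on those primes has trivial $\omega^{-1}$-part; you assert this via an unspecified ``support/character argument'' and yourself flag it as the main technical obstacle. This step is not formal: extended ideals ${\mathfrak l}\BZ_{K'}={\mathfrak L}'^{\,p^{e_\ell}}$, norms of units, and possible nontriviality of $(\CH_{k',p})_{\omega^{-1}}$ all enter, which is exactly the bookkeeping of the first exact sequence in \eqref{suites}. The paper avoids the issue entirely by using the second (genus) sequence of \eqref{suites}: one has $\rk_p(\CH_{K',p})_{\omega^{-1}}\ge\rk_p(\CH_{K',p}/\CH_{K',p}^{1-\sigma})_{\omega^{-1}}\ge\rk_p(\Omega_{k'})_{\omega^{-1}}$, where $\Omega_{k'}$ is the kernel of the product-formula map $\bigoplus_v G_v\to G$; the only correction term is $h_{k'}(E_{k'})$, whose $\omega^{-1}$-component vanishes by the unit argument you already have, and $(\Omega_{k'})_{\omega^{-1}}\simeq\bigoplus_{\ell\in S_\ta}(\Z/p^{e_\ell}\Z)^{t_\ell}$ since the $p$-places and the quotient $G'$ carry the unit character. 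Replacing your $\CH^{\ram}$ route by this genus-field formulation closes the gap at no cost; everything else in your proposal, including the specialization $k=\Q$ (where the correct reason $p$ is not totally split in $K(\mu_p)/K$ is that this extension is ramified at $p$ of tame degree $p-1$, rather than your residue-field remark), matches the paper.
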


\begin{proof}
Let $\zeta_p$ be a generator of $\mu_p^{}$ and let $\Q' := \Q(\zeta_p)$, 
$k' := k(\zeta_p)$ and $K' := K(\zeta_p)$; put $G := \Gal(K'/k')$ and $g := \Gal(K'/K)$. 
Recall that $\omega : g \to \mu_{p-1}^{}(\Z_p)$ is the Teichm\"uller character of $g$ 
(such that $\zeta_p^s =: \zeta_p^{\omega(s)}$, for all $s \in g$) and that any 
$\Z_p[g]$-module $X$ is the sum $\bigoplus_{j=1}^{p-1} X_{\omega^j}$ in an 
obvious meaning. Since $p \geq 5$, $\omega \ne \omega^{-1}$ and the 
comparison of $\BK_2(\BZ_K)[p^\infty] = \BK_2(\BR_K)[p^\infty]$ with 
$\CT_{K, p}$ made in \cite{GJ} does not hold.

\smallskip
The primes of $S_p(K)$ being not totally split in $K'/K$, the $\omega^{-1}$-component 
of the $\Z_p[G]$-module $\langle S_p(K') \rangle \otimes \Z_p$ generated by $S_p(K')$ 
above $S_p(K)$ is trivial; thus, formula \eqref{rankR} becomes:
$$ \rk_p(\BK_2(\BZ_K)) = \rk_p\big((\CH_{K', p})_{\omega^{-1}} \big). $$

We note that a prime number $\ell$, ramified in $K/\Q$ with ramification 
index divisible by $p$, fulfills the condition $\ell \equiv 1 \pmod p$
(for generalizations to the non-Galois case, see \cite[II, \S\,(d)]{Gr5}); whence 
$\ell$ totally splits in $\Q'/\Q$ and, for ${\mathfrak l} \mid \ell$ in $k$, 
the $p-1$ prime ideals ${\mathfrak L}$ above ${\mathfrak l}$ in $k'$ ramify 
in $K'/k'$ with the same ramification index; this gives $(p-1) t_\ell$ ramified 
ideals in $K'/k'$, whence $\order S_\ta(k') = (p-1)\sm_{\ell \in S_\ta} t_\ell$. 

Let $G_v \subseteq G$ be the inertia group in $K'/k'$ of a ramified place 
$v$ of $k'$ (so $v$ is a prime ideal above $\ell \ne p$ or a prime ideal above 
$p$, but the $\omega^{-1}$-component of $\langle S_p(k') \rangle \otimes \Z_p$ 
is trivial), and put $S(k') = S_\ta(k')\,\cup\,S_p(k')$.
Let $E_{k'}$ be the group of units of $k'$:

\smallskip
The Chevalley--Herbrand formula in $K'/k'$ writes:
\begin{equation*}
\order \CH_{K', p}^G = \order \CH_{k', p}^\nr \times \frac{\order \Omega_{k'}}
{\order h_{k'}(E_{k'})}, 
\end{equation*}
where $\CH_{k', p}^\nr \subseteq \CH_{k', p}$ corresponds to 
$\Gal(H_{k'}^\nr/K' \cap H_{k'}^\nr)$ and 
$\Omega_{k'} =\Big \{(s_v)_v \in \bigoplus_{v \in S(k')} 
G_v,\ \ \,\prod_v s_v = 1 \Big\}$, 
$h_{k'} : E_{k'} \to \Omega_{k'}$ being the map defined by the 
family of Hasse norm residue symbols $\Big(\big(\ffrac{x, K'/k'}{v}\big)\Big)_v$
which fulfill the ``product formula'' on units. 
This formula is associated to the following exact sequences: 
\begin{equation} \label{suites}
\left\{\begin{aligned}
&1 \to \BJ_{K'/k'}(\CH_{k', p}) \cdot \CH_{K', p}^\ram \too \CH_{K', p}^G \\
&\hspace{1.7cm} \too  E_{k'} \cap \Norm_{K'/k'}(K'^\times)/\Norm_{K'/k'}(E_{K'}) \to 1, \\
&1 \to E_{k'} / E_{k'} \cap \Norm_{K'/k'}(K'^\times) \\
&\hspace{1.7cm}\too \Omega_{k'} \too \Gal(H_{K'/k'}^\gen/K' H_{k'}^\nr) \to 1, 
\end{aligned}\right.
\end{equation}
where $\BJ_{K'/k'}$ is the extension of classes, $\CH_{K', p}^\ram$ the 
subgroup of $\CH_{K', p}$ generated by the ramified primes, $H_{K'/k'}^\gen 
\subseteq H_{K'}^\nr$ the genus field, fixed by $\CH_{K', p}^{1-\sigma}$, 
where $\sigma$ generates $G$ \cite[Proposition IV.4.5]{Gr5}.

\smallskip
The link between the two aspects (fixed points and genus 
exact sequences) is given by $\Gal(H_{K'/k'}^\gen/K') \simeq 
\CH_{K', p}/ \CH_{K', p}^{1-\sigma}$ and the exact sequence
$1 \to  \CH_{K', p}^G \to  \CH_{K', p} \to \CH_{K', p}^{1-\sigma} \to 1$. 

\smallskip
More precisely, we need the $\omega^{-1}$-component of $\CH_{K', p}^G$
and of the terms of the exact sequences. For $\CH_{K', p}^G$, this computation 
has been done in Jaulent's Thesis \cite[Chapitre III]{Jau1} and reproduced in 
\cite[\S\,2.2, Theorem 2.1\,(ii)]{Gra10}; the difficulty comes from the fact that 
$\CH_{k', p}$ is not necessary isomorphic to a sub-module of $\CH_{K', p}$ 
(indeed, only $\BJ_{K'/k'}(\CH_{k', p})$ makes sense as sub-module of 
$\CH_{K', p}$ and $\BJ_{K'/k'}$ is not necessarily injective). 
It writes under our context:
\begin{equation}\label{omega}
\order (\CH_{K', p}^G)_{\omega^{-1}} = \order (\CH_{k', p}^\nr)_{\omega^{-1}} \times 
\frac{\order (\Omega_{k'})_{\omega^{-1}}}{\order (h_{k'}(E_{k'}))_{\omega^{-1}}} .
\end{equation}

Since $E_{k'} = E_{k'^+} \oplus \mu_p$ (up to a prime-to-$p$ index), where 
the subgroup $E_{k'^+}$ of real units of $k'$ is of even character, and since 
$\mu_p$ is of character $\omega \ne \omega^{-1}$, the $\omega^{-1}$-component 
of $h_{k'}(E_{k'})$ is trivial and exact sequences \eqref{suites} and formula \eqref{omega} 
reduce to:
\begin{equation}\label{isomorphisms}
\left\{\begin{aligned}
(\CH_{K', p}^G)_{\omega^{-1}} & \simeq \big (\BJ_{K'/k'}(\CH_{k', p}) \cdot 
\CH_{K', p}^\ram \big)_{\omega^{-1}}, \\
(\Omega_{k'})_{\omega^{-1}} & \simeq (\Gal(H_{K'/k'}^\gen/K' H_{k'}^\nr))_{\omega^{-1}}, \\
\order (\CH_{K', p}^G)_{\omega^{-1}} & = \order (\CH_{k', p}^\nr)_{\omega^{-1}} \times 
\order (\Omega_{k'})_{\omega^{-1}} . \\
\end{aligned}\right.
\end{equation}

Whence:
\begin{equation}\label{fin}
\left\{\begin{aligned}
\rk_p \big(\BK_2(\BZ_K)\big) & = \rk_p\big((\CH_{K', p})_{\omega^{-1}} \big) \\
& \geq \rk_p(\CH_{K', p} / \CH_{K', p}^{1-\sigma})_{\omega^{-1}}
\geq \rk_p(\Omega_{k'})_{\omega^{-1}}. 
\end{aligned}\right.
\end{equation}

Let $G' := \langle G_v \rangle_v \simeq \Gal(K'/K' \cap H_{k'}^\nr)$; the product 
formula may be interpreted by means of the exact sequence:
$$1 \to \Omega_{k'} \too \plus_{v \in S(k')} G_v \too G' \subseteq G \to 1, $$
in which the character of $G'$ and that of $\bigoplus_{v \in S_p(k')} G_v$
is the unit character. Whence $(\Omega_{k'})_{\omega^{-1}} \simeq
\plus_{v \in S_\ta(k')} G_v$.

For $\ell \in S_\ta$, let $p^{e_\ell}$, $e_\ell \geq 1$, be the ramification index 
of $\ell$ in $K'/k'$. The inertia groups $G_v$, for $v \mid \ell$
in $k'$, does not depend on $v$ and by abuse may be denoted $G_\ell$. 
Since each of the $t_\ell$ primes ${\mathfrak l} \mid \ell$ in $k$, totally splits in 
$k'/k$ into $p-1$ primes ${\mathfrak L} \mid {\mathfrak l}$ of $k'$, one may write:
$$\plus_{{\mathfrak L}} G_{\mathfrak L} = \plus_{\ell \in S_\ta} \
\plus_{{\mathfrak l} \mid \ell} \
\plus_{{\mathfrak L} \mid {\mathfrak l}} G_\ell \simeq
\plus_{\ell \in S_\ta} \Big(\big (\Z/p^{e_\ell} \Z \big)^{p-1} \Big)^{t_\ell}, $$ 
in which, each $\ell$-component is, as Galois module, isomorphic 
to $t_\ell$ copies of the regular representation 
$\bigoplus_{j=1}^{p-1}\big (\Z[G_\ell]/
p^{e_\ell} \Z[G_\ell] \big)_{\omega^j}$ 
whose $\omega^{-1}$-component is of order $p^{e_\ell}$. Whence
$(\Omega_{k'})_{\omega^{-1}} \!\simeq \bigoplus_{\ell \in S_\ta} 
(\Z/p^{e_\ell} \Z)^{t_\ell}$. 

Thus, one obtains, from \eqref{fin}, $\rk_p(\BK_2(\BZ_K)) \geq \sum_{\ell \in S_\ta} t_\ell$. 
\end{proof}

\begin{corollary}
We obtain the following consequences:

\smallskip
(i) $\order (\CH_{k', p}^\nr)_{\omega^{-1}} \times 
\order (\Omega_{k'})_{\omega^{-1}} = \order \Big (\BJ_{K'/k'}(\CH_{k', p}) \cdot 
\CH_{K', p}^\ram \Big)_{\omega^{-1}}$. 

\smallskip
(ii) If $\BK_2(\BZ_k)[p^\infty] = 1$, then $(\CH_{K', p}^\ram)_{\omega^{-1}} 
\simeq \bigoplus_{\ell \in S_\ta} (\Z/p^{e_\ell} \Z)^{t_\ell}$.

\smallskip
(iii) If $k = \Q$, then:

\smallskip
\qquad $\bullet$ $\rk_p(\BK_2(\BZ_K)) \geq  \order S_\ta$; 

\smallskip
\qquad $\bullet$ $(\CH_{K', p}^\ram)_{\omega^{-1}} \simeq 
\bigoplus_{\ell \in S_\ta} \Z/p^{e_\ell} \Z$.
\end{corollary}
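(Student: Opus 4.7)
All three parts follow quickly from Theorem \ref{pdiv}, the reflection formula \eqref{rankR}, and the isomorphisms \eqref{isomorphisms} established in its proof.

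Part (i) is immediate: the first line of \eqref{isomorphisms} identifies $(\CH_{K',p}^G)_{\omega^{-1}}$ with $\big(\BJ_{K'/k'}(\CH_{k',p}) \cdot \CH_{K',p}^\ram\big)_{\omega^{-1}}$, and substituting this into the order formula on the third line of \eqref{isomorphisms} yields the claim.

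For Part (ii), the crux is the implication $\BK_2(\BZ_k)[p^\infty] = 1 \Longrightarrow (\CH_{k',p})_{\omega^{-1}} = 0$. Since $p$ is totally ramified in $\Q(\mu_p)/\Q$, hence in $k'/k$, each prime of $S_p(k)$ has a unique extension to $k'$; every generator of $\langle S_p(k') \rangle$ is therefore fixed by $g := \Gal(k'/k)$, and a character-sum argument (using $\omega|_g \ne \1$, because $k$ is real and $p \geq 5$) shows $\langle S_p(k') \rangle_{\omega^j} = 0$ for $j \not\equiv 0 \pmod{p-1}$. Consequently $(\CH_{k',p})_{\omega^{-1}} = (\BH_{k'}^{S_p(k'){\rm res}})_{\omega^{-1}}$. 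On the other hand, for $p \geq 5$ the exact sequence \eqref{regular} gives $\rk_p(\BR_2(\BZ_k)) = \rk_p(\BK_2(\BZ_k)) = 0$; Formula \eqref{rankR} applied to $k$---noting that no prime of $S_p(k)$ is totally split in $k'/k$ (all are totally ramified) and that $\delta = 0$ since $\mu_p \not\subset k$---forces $(\BH_{k'}^{S_p(k'){\rm res}})_{\omega^{-1}} = 0$. Combining these, $(\CH_{k',p})_{\omega^{-1}} = 0$, whence $(\BJ_{K'/k'}(\CH_{k',p}))_{\omega^{-1}} = 0$ and $(\CH_{k',p}^\nr)_{\omega^{-1}} = 0$. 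Part (i) then collapses to $\order (\CH_{K',p}^\ram)_{\omega^{-1}} = \order (\Omega_{k'})_{\omega^{-1}}$, while the second sequence of \eqref{suites}, restricted to the $\omega^{-1}$-component (where the $E_{k'}$-factor vanishes by the argument just above Formula \eqref{isomorphisms}), together with the first line of \eqref{isomorphisms}, produces a natural surjection $(\Omega_{k'})_{\omega^{-1}} \twoheadrightarrow (\CH_{K',p}^\ram)_{\omega^{-1}}$; matching orders upgrades this surjection to an isomorphism, and the Galois-module decomposition $(\Omega_{k'})_{\omega^{-1}} \simeq \bigoplus_{\ell \in S_\ta}(\Z/p^{e_\ell}\Z)^{t_\ell}$ from the end of the proof of Theorem \ref{pdiv} delivers the claim.

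Part (iii) is a direct specialization: $k = \Q$ gives $t_\ell = 1$ for every $\ell \in S_\ta$, so Theorem \ref{pdiv} yields $\rk_p(\BK_2(\BZ_K)) \geq \order S_\ta$; and since $\BK_2(\Z) \simeq \Z/2\Z$, the hypothesis $\BK_2(\BZ_k)[p^\infty] = 1$ of Part (ii) is trivially satisfied for $p \geq 5$, so the stated isomorphism follows. The main delicate point throughout is the upgrade from an equality of orders to a module isomorphism in Part (ii); once the naturality of the inertia-to-class map on the $\omega^{-1}$-component is verified via the second exact sequence in \eqref{suites}, the matching of orders forces the desired isomorphism, and everything else is mechanical bookkeeping on the Galois decomposition already carried out in the proof of Theorem \ref{pdiv}.
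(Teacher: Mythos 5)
Your argument is correct in substance and follows the same route as the paper, whose own proof is a single sentence: parts (i)--(ii) ``come from the fact that $\BK_2(\BZ_k)[p^\infty]=1$ is equivalent to $(\CH_{k',p})_{\omega^{-1}}=1$, then to \eqref{isomorphisms}'', and part (iii) from $\sum_\ell t_\ell=\order S_\ta$ together with $(\CH_{\Q',p})_{\omega^{-1}}=1$. You supply the details exactly where they are needed (the equivalence via \eqref{rankR} applied to $k$, and the upgrade from equal orders to an isomorphism via a natural surjection from $(\Omega_{k'})_{\omega^{-1}}$), but two justifications should be repaired. First, the parenthetical ``all are totally ramified'' is false in general: for $p\geq 5$ and $[k:\Q]$ sharing a factor with $p-1$, a prime of $S_p(k)$ need not be totally ramified in $k'/k$ (e.g.\ $p=5$, $k=\Q(\sqrt{10})$, where the ramification index of $5$ in $k'/k$ is $2$, not $4$); the fact you actually need --- that no prime of $S_p(k)$ is totally split in $k'/k$ --- is not automatic either, but follows from the standing hypothesis of Theorem \ref{pdiv} that $p$ does not totally split in $K(\mu_p)/K$, which descends to $k'/k$. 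Second, the surjection $(\Omega_{k'})_{\omega^{-1}}\twoheadrightarrow(\CH_{K',p}^{\ram})_{\omega^{-1}}$ does not come from the second sequence of \eqref{suites}, whose right-hand term is the genus quotient $\Gal(H^{\gen}_{K'/k'}/K'H^{\nr}_{k'})$, a quotient of $\CH_{K',p}/\CH_{K',p}^{1-\sigma}$ rather than a subgroup of $\CH_{K',p}^{G}$; it comes from the equivariant map sending each inertia generator to the class of the corresponding ramified prime, $\bigoplus_v G_v\to\big(\BJ_{K'/k'}(\CH_{k',p})\cdot\CH_{K',p}^{\ram}\big)/\BJ_{K'/k'}(\CH_{k',p})$, whose source has $\omega^{-1}$-component equal to $(\Omega_{k'})_{\omega^{-1}}$ because $G'$ carries the unit character, and whose target reduces to $(\CH_{K',p}^{\ram})_{\omega^{-1}}$ once $(\BJ_{K'/k'}(\CH_{k',p}))_{\omega^{-1}}=1$. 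Finally, in (iii) you obtain $(\CH_{\Q',p})_{\omega^{-1}}=1$ from $\order\BK_2(\Z)=2$ via part (ii), whereas the paper invokes Ribet's converse of Herbrand's criterion ($B_2=\frac16$ prime to $p$); the two are equivalent through the reflection formula \eqref{rankR}, and your route is self-contained once (ii) is in place.
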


\begin{proof}
Cases (i), (ii) come from the fact that $\BK_2(\BZ_k)[p^\infty] = 1$ is equivalent to 
$(\CH_{k', p})_{\omega^{-1}}  = 1$, then to \eqref{isomorphisms}. If $k = \Q$, 
$\sum_{\ell \in S_\ta} t_\ell = \order S_\ta$; then $(\CH_{\Q', p})_{\omega^{-1}} = 1$
from Ribet's reciprocal of Herbrand's criterion, since the Bernoulli number 
$B_2 = \frac{1}{6}$ is prime to $p$ \cite[Theorem 5, p.\,45]{Rib}. Whence (iii).
\end{proof}

\begin{remarks}
(i) For $p=3$, $\omega^{-1} = \omega$, so that $(\CH_{K', p}^G)_{\omega^{-1}}$
depends on $\ds \frac{\order (\Omega_{\Q'})_{\omega^{-1}}}{\order h_{\Q'}(\mu_3)}$ 
which may be trivial, when $S_\ta = \{\ell\}$ and $[K : \Q] = 3$.

\smallskip
(ii) Replacing $\BK_2(\BZ_K)[p^\infty]$ by $\CT_{K, p}$ (whatever the prime $p \geq 3$) 
and $\omega^{-1}$ by $\omega$, $h_{\Q'}(\mu_p)$ may be non-trivial, being of 
character $\omega$. So, if $K/\Q$ is of prime conductor $\ell$, $\CH_{K, p} = 1$ 
and we may have $\CT_{K, p}=1$ but $\BK_2(\BZ_K)[p^\infty] \ne 1$.

\smallskip
(iii) For a table of numerical examples, see Appendix \ref{D1} for the 
computation of $\order\BK_2(\BZ_K)[p^\infty]$, $p \geq 3$, and 
Appendix \ref{D2} for the computation of $\rk_5(\BK_2(\BZ_K)[5^\infty]$.
Numerical tables are given in \cite{Br2} for cyclic cubic fields; more on 
computations of $p$-ranks are given in Qin's papers \cite{Q1, Q2, Q3}.
\end{remarks}

\section{Conclusion}
These explicit calculations lead to a confirmation of the properties of the 
$\BL_p$-functions when $s$ varies in $\Z_p$ and takes the values $s \in \{-1, 1\}$, 
giving annihilation theorems, orders of isotypic components of classical $p$-invariants. 

\smallskip
Other invariants than class groups, torsion groups and regular kernels exist, as for 
instance the Jaulent logarithmic class group $\wt \CH_{K, p}$\,\footnote{\,Also denoted, 
most often in the literature, $\wt {\rm C\ell}_{K,p}$, $\wt \CT_{K,p}$ as quotient of $\CT_{K,p}$;
but it looks more like the $p$-class group $\CH_{K, p}$ and there exists a formula of the 
form $\order \CT_{K,p} = \wt \CH_{K, p} \cdot \wt \CR_{K, p} \cdot \wt \CW_{K, p}$, 
for a suitable ``logarithmic regulator'', see \cite[Schema \S\,2.3]{Jau24}. Moreover 
it may capitulates in real $p$-extensions of $K$ (the key for Greenberg's conjecture),
while $\CT_{K,p}$ never capitulates under Leopoldt's conjecture.} \cite{Jau23, Jau3, Jau4} 
linked to the wild Hilbert kernel \cite[Th\'eor\`eme 9]{Jau22} and in relation with Greenberg's 
conjecture \cite{Jau24, Jau25} (see Appendices \ref{clog2}, \ref{clog3} for some computations), 
and invariants of the higher $K$-theory (giving conjectural expressions suggested 
in \cite[Section 12, Conjecture 12.2]{Gr4} and some results of \cite{JM}). 

\smallskip
An important problem is to find the structure of these invariants, it being understood 
that they fulfill, once genus part has been taken into consideration, standard densities 
in the spirit of the Cohen--Lenstra--Martinet--Malle distributions. A specific technique, 
of genus theory type, does exist in cyclic $p$-extensions, especially for $p$-class 
groups with determining the canonical filtration by means of a natural algorithm 
generalizing Chevalley--Herbrand fixed points formula (large bibliography, 
synthesized in \cite{Gr7}). 

\smallskip
A main question being to find such algorithms 
for the other $p$-invariants, which unfortunately does not exist to our knowledge;
the case of the most important groups $\CT_{K, p}$ being very interesting; indeed, 
class field theory (by means of computation of suitable ray class groups) 
gives the group structure, but this computational aspect does not give a method
to determine the filtration in cyclic $p$-extensions; only the fixed points formula, 
is known \cite[Theo\-rem IV.3.3]{Gr5}.

\smallskip
In a cohomological viewpoint, $\CT_{K, p}$ is closely related to the 
Tate--Chafarevich group
$\Cha_{K, p}^2 := {\rm Ker} \big [{\Hom}^2 (\CG_{K, S}, \F_p) \rightarrow
\oplus_{{\mathfrak p} \mid p}\,{\Hom}^2 (\CG_{K_{\mathfrak p}}, \F_p) \big]$, 
and that $\CT_{K, p} \simeq \Hom^2(\CG_{K, S}, \Z_p)^\ast$ \cite[Th\'eor\`eme1.1]{Ng1}, 
where $\CG_{K, S}$ is the Galois group of the maximal $p$-ramified pro-$p$-extension 
of $K$ and $\CG_{K_{\mathfrak p}}$ the local analogue over $K_{\mathfrak p}$; 
generalizations of the same kind are done in the literature, as in 
\cite[Theorem 0.4, 0.7, 0.11,\,\ldots]{Ko1}, \cite[Theorem 3.5, Example 4.3]{AAM}, 
with analogous cohomological framework, without any workable algorithm.

\smallskip
A major difficulty is to interpret, in terms of ideals, global units and so on, such
cohomology groups; the case of $\CT_{K, p} \simeq \Hom^2(\CG_{K, S}, \Z_p)^\ast$
is edifying since $\CG_{K, S}$ is inaccessible, while $\CT_{K, p}$ depends on the 
$p$-class group and on the normalized $p$-adic regulator, of $K$. Of course, cohomology
gives important and non-obvious relations but, in the previous example, it is the knowledge 
of $\CT_{K, p}$ which yields important information on $\CG_{K, S}$ (e.g., case of the simplest
structure giving the notion of $p$-rational fields issued from \cite{Gr3, Mov} and largely 
developed in a lot of papers; see \cite[Appendix]{Gra88} for a survey 
about this notion and general abelian $p$-ramification theory).
  
\smallskip
Maybe the link we made in \cite[\S\,III.2, Theorem III.2.6, Theorem III.3.3]{Gr5} and
\cite[Appendice]{GJ}, using logarithms of ideals for an effectiveness of the arithmetic, 
may be generalized.

\smallskip
As a final remark, we point out the fact that these analytic genus theory aspects of $p$-adic 
pseudo-measures are stated in \cite{Gr33} for any totally real base field $k$ replacing $\Q$ 
(under Leopoldt's conjecture for $p$, to get the important result of deployment \cite[Theorem 
III.4.1.5]{Gr5}), and allowing similar technics with genus fields frome Deligne--Ribet 
pseudo-measures; these results being completed with for instance the article by Maire 
\cite{Mai} on the class field theory aspects and governing fields about the {\it existence} 
of prescribed $p$-cyclic ramified extensions for which an analog of Theorem \ref{thmfond} 
should apply.

\subsection*{Acknowledgements} I thank with pleasure Litong Deng and Yong\-xiong Li
for interesting exchanges about their paper, giving to me the idea of taking again into 
account my old papers on genus theory of $p$-adic pseudo-measures, hoping that 
arithmetic invariants of new families of real abelian fields can be elucidated via this method.

\begin{appendix}

\section{Data relating to the Deng--Li family}

This section is only concerned by the family of quadratic fields described 
and studied in Section \ref{DengLi}; we illustrate and confirm various numerical aspects.

\subsection{Computation of \texorpdfstring{$\CH_{K, p}$}{Lg}, 
\texorpdfstring{$\CT_{K, p}$}{Lg} and \texorpdfstring{$\wt \CH_{K, p}$}{Lg}}\label{A1}

We compute the three fundamental invariants for this family, it being understood that
the structure of $\CH_{K, p}$ and complex analytic properties are given in \cite{DL}
in order to study $\BK_2(\BZ_K)[2^\infty]$.

\subsubsection{General program for \texorpdfstring{$\CH_{K, p}$}{Lg}, 
\texorpdfstring{$\CT_{K, p}$}{Lg}}
It may be convenient to recall the program computing the structures of $\CH_{K, p}$ and $\CT_{K, p}$ 
for a number field $K$, for any ${\sf p \in [bp, Bp]}$, as soon as a defining polynomial $P \in \Z[X]$ 
(monic, irreducible) is given (in ${\sf P}$); for the computation of $\CT_{K, p}$, the parameter 
${\sf nu}$ must be chosen such that ${\sf p^{nu}}$ be larger than the exponent of the result 
${\sf T}$ \cite[\S\,2.1]{Gr6}; the number ${\sf r}$ is the number $r_2+1$ 
of independent $\Z_p$-extensions of $K$:

\smallskip
\ft\begin{verbatim}
{P=x^3-7*x+1;bp=2;Bp=5*10^5;K=bnfinit(P,1);r=K.sign[2]+1;
print("P=",P," H=",K.cyc);forprime(p=bp,Bp,nu=6;
Kpn=bnrinit(K,p^nu);HKn=Kpn.cyc;T=List;e=matsize(HKn)[2];
R=0;for(k=1,e-r,c=HKn[e-k+1];v=valuation(c,p);if(v>0,R=R+1;
listinsert(T,p^v,1)));if(R>0,print("p=",p," rk(T)=",R," T=",T)))}

P=x^3-7*x+1     H=[]        P=x^8-8*x+1    H=[4]
p=7    rk(T)=1  T=[7]       p=2   rk(T)=1  T=[4]
p=701  rk(T)=1  T=[701]     p=3   rk(T)=1  T=[3]
\end{verbatim}\ns

\subsubsection{Case of the Deng--Li family}
The following program verifies the structure of $\CH_{K, 2}$ and 
$\CT_{K, 2}$ for the Deng--Li family of quadratic fields, for $n=4$:

\smallskip
\ft\begin{verbatim}
{L=List
([7215,26455,77415,119535,142935,153735,166335,171015,196359,
226655,241215,243295,257335,283855,311415,315055,420135,430495,
447135,473415,475215,490295,504295,545415,550615,552695,553335,
563695,568815,592215,603655,606615,633399,657735,665223,673215,
685815,687895,727935,751335,755495,757055,790495,798135,803751,
807455,818935,833199,849615,878415,884455,886015,886335,896415,
905255,911495,934935,961935,973655,981695,990015]);
p=2;nu=9;for(j=1,61,P=x^2-L[j];K=bnfinit(P,1);HK=K.cyc;
Kpn=bnrinit(K,p^nu);HKn=Kpn.cyc;T=List;e=matsize(HKn)[2];
for(k=1,e-1,c=HKn[e-k+1];v=valuation(c,p);if(v>0,listput(T,p^v)));
print("P=",P," T=",T," H=",HK))}

P=x^2-7215  T=[2,2,4] H=[2,2,2]   P=x^2-981695 T=[2,2,4] H=[10,2,2]
P=x^2-26455 T=[2,2,4] H=[2,2,2]   P=x^2-990015 T=[2,2,4] H=[18,2,2]
\end{verbatim}\ns

\smallskip
From \eqref{equalranks}, the $2$-ranks of $\BR_2(\BZ_K)$ and $\CT_{K, 2}$ are equal to $3$, 
this implies, since $\BK_2(\BZ_K)[2^\infty] \simeq (\Z/2\Z)^3 \times \Z/2^3\Z$, that:
$$\BR_2(\BZ_K)[2^\infty] \simeq (\Z/2\Z)^2 \times \Z/2^2\Z \simeq \CT_{K, 2}. $$ 

More generally, for the Deng--Li family, we have:
$$\BR_2(\BZ_K)[2^\infty] \simeq \CT_{K, 2} \simeq 
(\Z/2\Z)^{n-2} \!\times \Z/2^2\Z,\  \CH_{K, 2} \simeq (\Z/2\Z)^{n-1}. $$ 

From this, we verify that $\CW_{K, 2} \simeq \Z/2\Z$, which
is coherent with the formula $\order \CT_{K, 2} = \order \CH_{K, 2} \times 
\order \CR_{K, 2} \times \order \CW_{K, 2}$ since the normalized $2$-adic regulator 
is trivial from Theorem \ref{regulator}.

\subsubsection{Logarithmic class group}\label{classlog}
 
The computation of $\wt \CH_{K, 2}$ (instruction ${\sf CLog=bnflog(K, 2)}$) 
for the Deng--Li family, gives always the structure $ (\Z/2\Z)^2$ for $n=4$; 
but we do not know if this invariant is given by the $\BL_2(s, \chi)$'s at some 
$s \in \Z_2$ (see \S\,\ref{clog2} for the computation of $\wt \CH_{K, 2}$ for  
quadratic fields and \S\,\ref{clog3} for the cubic case):

\smallskip
\ft\begin{verbatim}
P=x^2-7215     CLog=[[2,2],[],[2,2]]  T=[2,2,4] H=[2,2,2]
P=x^2-26455    CLog=[[2,2],[],[2,2]]  T=[2,2,4] H=[2,2,2]
(...)
P=x^2-981695   CLog=[[2,2],[],[2,2]]  T=[2,2,4] H=[10,2,2]
P=x^2-990015   CLog=[[2,2],[],[2,2]]  T=[2,2,4] H=[18,2,2]
\end{verbatim}\ns

\smallskip\noindent
Moreover, the third component of ${\sf bnflog(K, 2)}$ gives the $S_2$-class group
$\CH_{K, 2}/{\rm cl}({\mathfrak p}) \simeq \Z/2\Z \times \Z/2\Z$, where 
${\mathfrak p} \mid 2$ is ramified; this result confirms that necessarily 
${\mathfrak p}$ is non-principal of order $2$.

\subsection{The canonical non-trivial relation}\label{A2}
The program gives the non-trivial relation whose writing does not contain ${\mathfrak l}_1$
(with $\ell_1 \equiv 3 \pmod 8$), then that obtained multiplying by $(\sqrt m)$.
 We have proven that this relation is always
${\mathfrak p} {\mathfrak l}_2 = (\alpha)$ (Lemma \ref{lem2}).
The list $L$ gives the exponents $a_0$, $a_1$, $a_2,\,\ldots$, 
of ideals ${\mathfrak p}$, ${\mathfrak l}_1$, ${\mathfrak l}_2,\,\ldots$, in this order 
especially for the particular primes ${\mathfrak l}_1$, ${\mathfrak l}_2$ (which 
is not so immediate since the PARI factorization of $m$ gives primes 
in the ascending order). The generator $\alpha$ of the relation of principality is  
$\alpha = u+v \sqrt m$; for checking, its norm $N$ and the sign 
$S$ of $N$ are computed:

\smallskip
\ft\begin{verbatim}
{m=List
([7215,26455,77415,119535,142935,153735,166335,171015,196359,
226655,241215,243295,257335,283855,311415,315055,420135,430495,
447135,473415,475215,490295,504295,545415,550615,552695,553335,
563695,568815,592215,603655,606615,633399,657735,665223,673215,
685815,687895,727935,751335,755495,757055,790495,798135,803751,
807455,818935,833199,849615,878415,884455,886015,886335,896415,
905255,911495,934935,961935,973655,981695,990015]);p=2;
for(j=1,61,M=m[j];D=quaddisc(M);r=omega(D);L0=List;
for(i=1,r,listput(L0,0));P=x^2-M;K=bnfinit(P,1);print();Lel=List;
Div=component(factor(D),1);listput(Lel,2,1);h=3;for(i=2,r,c=Div[i];
if(Mod(c,4)==-1,el1=c;listput(Lel,c,2)));for(i=2,r,c=Div[i];
if(c!=el1 & kronecker(c,el1)==-1,el2=c;listput(Lel,c,3)));
for(i=2,r,c=Div[i];if(c!=el2 & Mod(c,4)==1,h=h+1;listput(Lel,c,h)));
print("M=",M," D=",Lel);for(k=1,2^r-1,B=binary(k);t=#B;
LB=L0;for(i=1,t,listput(LB,B[i],r-t+i));F=idealfactor(K,D);
Fel=List;CFel=component(F,1);listput(Fel,CFel[1],1);h=3;
for(i=2,r,c=CFel[i][1];if(Mod(c,4)==-1,el1=c;
listput(Fel,CFel[i],2)));
for(i=2,r,c=CFel[i][1];if(c!=el1 & kronecker(c,el1)==-1,el2=c;
listput(Fel,CFel[i],3)));
for(i=2,r,c=CFel[i][1];if(c!=el2 & Mod(c,4)==1,h=h+1;
listput(Fel,CFel[i],h)));
A=1;for(j=1,r,e=LB[j];if(e==0,next);A=idealmul(K,A,Fel[j]);
Q=bnfisprincipal(K,A);u=Q[2][1];v=Q[2][2];
if(Q[1]==0 & u!=0,print("L=",LB);N=u^2-M*v^2;S=sign(N);
print(component(factor(abs(N)),1));
print("u=",u," v=",v," S=",sign(S)))))}
\end{verbatim}\ns

\smallskip
Let's give some excerpts showing all possible cases:

\smallskip
\ft\begin{verbatim}
M=7215 D=[2,3,5,13,37]                M=26455 D=[2,11,13,5,37]
L=[1,0,1,0,0]                         L=[1,0,1,0,0]
[2,5]                                 [2,13]
u=-85 v=1  S=1                        u=86654841 v=-532769  S=1
L=[1,1,0,1,1]                         L=[1,1,0,1,1]
[2,3,13,37]                           [2,5,11,37]
u=1443 v=17  S=-1                     u=1084184915 v=-6665757  S=-1

M=504295 D=[2,11,173,5,53]            M=665223 D=[2,3,461,13,37]
L=[1,0,1,0,0]                         L=[1,0,1,0,0]
[2,173]                               [2,461]
u=7970980451749 v=11224562253 S=1     u=10603 v=13  S=1
L=[1,1,0,1,1]                         L=[1,1,0,1,1]
[-1,2,5,11,53]                        [2,3,13,37]
u=-32719598967495 v=46075031513 S=-1  u=18759 v=23  S=-1
\end{verbatim}\ns

\section{Quadratic fields}

\subsection{Computation of 
\texorpdfstring{$\order \BK_2(\BZ_K)[2^\infty]$, $\CT_{K, 2}$ and $\BC$}{Lg}} \label{B1}
The following program computes, instead, $\order \BR_2(\BZ_K)[2^\infty]$ for any real 
quadratic field (so $f$ is $m$ or $4m$) and $\CT_{K, 2}$ for 
checking of Theorem \ref{thmfond} (cases (ii${}_2$) and (iii${}_2$) and
property {\bf (b)} about equality v.s. inequality, for all $s \in \Z_p$).
In that cases, $\epsilon = 1$, $\BD$ is the number of odd ramified primes, 
and $\BC = \BD$ (resp. $\BC = \BD-1$) if $\theta^{-1}\chi(2) = 1$
(resp. $\theta^{-1}\chi(2) \ne 1$). 

\smallskip
Recall that ${\sf nu}$ and ${\sf n}$ must be chosen large enough (below we take 
$n=10$ for $m >100$, $n=13$ for $m > 10^3$).

\smallskip
We do not write the trivial cases $\CT_{K, 2} = \BR_2(\BZ_K)[2^\infty] = 1$.

\smallskip
\ft\begin{verbatim}
{p=2;q=4;nu=10;n=6;for(m=5,2000,if(core(m)!=m,next);f=quaddisc(m);
P=x^2-m;K=bnfinit(P,1);Kpn=bnrinit(K,p^nu);H=Kpn.cyc;T=List;
e=matsize(H)[2];valT=0;for(k=1,e-1,h=H[e-k+1];v=valuation(h,p);
if(v>0,valT=valT+v;listput(T,p^v)));f=quaddisc(m);fn=q*p^n*f;c=1;
while(gcd(c,2*m)!=1 || kronecker(f,c)!=-1,c=c+1);S=0;
forstep(a=1,fn/2,2,if(gcd(a,m)!=1,next);aa=lift(Mod(a/c,fn));
la=(aa*c-a)/fn;eps=kronecker(m,a);S=S+a*eps*(la+(1-c)/2));
valR=valuation(S,2)-1;D=omega(m);if(Mod(m,2)==0,D=D-1);
C=D;if(Mod(m,8)!=-1,C=D-1);
if(valT==C,print("m=",m," c=",c," T=",T," v_2(T)=",valT,
" val(R_2Z)=",valR," C=",C," Equality"));
if(valT>C,print("m=",m," c=",c," T=",T," v_2(T)=",valT,
" v_2(R_2Z)=",valR," C=",C," Inequality")))}

m=7   c=5  T=[4]     v_2(T)=2  v_2(R_2Z)=2  C=1 Inequality
m=14  c=3  T=[2]     v_2(T)=1  v_2(R_2Z)=1  C=0 Inequality
m=15  c=13 T=[4]     v_2(T)=2  v_2(R_2Z)=2  C=2 Equality
m=17  c=3  T=[2]     v_2(T)=1  v_2(R_2Z)=1  C=0 Inequality
m=21  c=11 T=[2]     v_2(T)=1  v_2(R_2Z)=1  C=1 Equality
m=23  c=3  T=[4]     v_2(T)=2  v_2(R_2Z)=2  C=1 Inequality
m=30  c=11 T=[2]     v_2(T)=1  v_2(R_2Z)=1  C=1 Equality
m=31  c=7  T=[8]     v_2(T)=3  v_2(R_2Z)=3  C=1 Inequality
m=33  c=5  T=[2]     v_2(T)=1  v_2(R_2Z)=1  C=1 Equality
m=34  c=7  T=[2]     v_2(T)=1  v_2(R_2Z)=1  C=0 Inequality
m=35  c=3  T=[2]     v_2(T)=1  v_2(R_2Z)=1  C=1 Equality
m=39  c=11 T=[4]     v_2(T)=2  v_2(R_2Z)=2  C=2 Equality
m=41  c=3  T=[16]    v_2(T)=4  v_2(R_2Z)=3  C=0 Inequality
(...)
m=1001 c=3  T=[2,2]  v_2(T)=2  v_2(R_2Z)=2  C=2 Equality
m=1002 c=5  T=[16]   v_2(T)=4  v_2(R_2Z)=5  C=1 Inequality
m=1003 c=5  T=[4]    v_2(T)=2  v_2(R_2Z)=2  C=1 Inequality
m=1005 c=29 T=[2,8]  v_2(T)=4  v_2(R_2Z)=5  C=2 Inequality
m=1006 c=7  T=[2]    v_2(T)=1  v_2(R_2Z)=1  C=0 Inequality
m=1007 c=3  T=[4]    v_2(T)=2  v_2(R_2Z)=2  C=2 Equality
m=1009 c=11 T=[2]    v_2(T)=1  v_2(R_2Z)=1  C=0 Inequality
m=1010 c=3  T=[2]    v_2(T)=1  v_2(R_2Z)=1  C=1 Equality
m=1011 c=7  T=[4]    v_2(T)=2  v_2(R_2Z)=2  C=1 Inequality
m=1015 c=17 T=[2,4]  v_2(T)=3  v_2(R_2Z)=3  C=3 Equality
m=1022 c=3  T=[2,16] v_2(T)=5  v_2(R_2Z)=9  C=1 Inequality
m=1023 c=5  T=[2,64] v_2(T)=7  v_2(R_2Z)=7  C=3 Inequality
\end{verbatim}\ns

\smallskip
The equality $v_2(\order \CT_{K, 2}) = \BC$ often occur and when
$v_2(\order \CT_{K, 2}) > \BC$, then $v_2(\order \BR_2(\BZ_K)[2^\infty] > \BC$, 
with some cases of distinct valuations ($m=41, 65, 66, 114,\,\ldots$).

\smallskip
Numerical examples given in \cite[Table]{DL}, yield equalities with 
$\BC = \BD +v_2(4 s) -2 \epsilon = 4$ for $s=-1$, since $2$ splits in 
$M = \Q(\sqrt{-m})$ (Theorem \ref{thmfond}\,(iii$_2$) and Corollary \ref{remfond}).

\subsection{Computation of \texorpdfstring{$\wt \CH_{K, 2}$}{Lg}}\label{clog2}
 
We give a program of computation of the logarithmic class group to compare with 
the invariants $\CH_{K, 2}$, $\CT_{K, 2}$. There are many structures, but the 
relation between $\CH_{K, 2}$ and $\CT_{K, 2}$ being well-known, we give only 
an excerpt of interesting examples concerning $\wt \CH_{K, 2}$. For $p=2$, the 
first component of ${\sf bnflog(K,2)}$ gives $\wt \CH_{K, 2}^0$ of index 
$1$ or $2$ in $\wt \CH_{K, 2}$ \cite[\S\,3, Remark 3.1]{BeJa}:

\smallskip
\ft\begin{verbatim}
{bf=10^4;Bf=10^4+10^3;for(m=bf,Bf,if(core(m)!=m,next);PK=x^2-m;
F=component(factor(m),1);nu=12;K=bnfinit(PK,1);HK=K.clgp;
Kpn=bnrinit(K,2^nu);HKn=Kpn.cyc;T=List;w=0;e=matsize(HKn)[2];
for(k=1,e-1,CK=HKn[e-k+1];v=valuation(CK,2);if(v>0,w=w+v;
listput(T,2^v)));K=bnfinit(PK,1);HK=K.clgp;Kpn=bnrinit(K,2^nu);
HKn=Kpn.cyc;T=List;w=0;e=matsize(HKn)[2];for(k=1,e-1,CK=HKn[k+1];
v=valuation(CK,2);if(v>0,w=w+v;listput(T,2^v)));CLK=bnflog(K,2);
print("m=",F," P=",PK," H=",HK[2]," T=",T," CLog=",CLK))}

m=[3,5,23,29] x^2-10005 H=[2,2]  T=[4,2,2] CLog=[[2,2],[],[2,2]]
m=[3,47,71]   x^2-10011 H=[4,2]  T=[4,2]   CLog=[[4],[],[4]]
m=[3,5,11,61] x^2-10065 H=[2,2]  T=[16,2,2]CLog=[[4],[2],[2]]
m=[2,3,23,73] x^2-10074 H=[8,2]  T=[32,4]  CLog=[[8],[],[8]]
m=[2,3,19,89] x^2-10146 H=[4,2]  T=[8,2]   CLog=[[4],[],[4,2]]
m=[7,31,47]   x^2-10199 H=[4,2]  T=[4,4,2] CLog=[[4,2],[],[4,2]]
m=[10337]     x^2-10337 H=[]     T=[8]     CLog=[[4],[4],[]]
m=[2,5297]    x^2-10594 H=[8]    T=[32]    CLog=[[4],[],[8]]
m=[3,11,17,19]x^2-10659 H=[2,2,2]T=[2,2,2] CLog=[[2,2,2],[],[2,2,2]]
m=[17,641]    x^2-10897 H=[2]    T=[64,2]  CLog=[[2,2],[2],[2]]
m=[3,5,17,47] x^2-11985 H=[2,2]  T=[1024,2,2] 
                                           CLog=[[128],[64],[2]]
m=[12161]     x^2-12161 H=[11]   T=[512]   CLog=[[16],[16],[]]
m=[2,3,13,167]x^2-13026 H=[2,2]  T=[128,2] CLog=[[],[],[2]]
\end{verbatim}\ns

\subsection{Computation of \texorpdfstring{$\CT_{K, 3}$ and 
$\order \BK_2(\BZ_K)[3^\infty]$}{Lg}} \label{B2}

For $p=3$, genus theory does not apply; we only give the valuation 
of $\order \BK_2(\BZ_K)[3^\infty]$ and the structure of $\CT_{K, 3}$; trivial 
cases are not written:

\smallskip
\ft\begin{verbatim}
{p=3;nu=5;n=6;for(m=2,10^2,if(core(m)!=m,next);P=x^2-m;
K=bnfinit(P,1);Kpn=bnrinit(K,p^nu);H=Kpn.cyc;T=List;e=matsize(H)[2];
for(k=1,e-1,C=H[e-k+1];v=valuation(C,p);if(v>0,listput(T,p^v)));
f=quaddisc(m);fn=p^(n+1)*f;c=1;while(Mod(c,p)==0 || 
kronecker(f,c)!=-1,c=c+1);S=0;forstep(a=1,fn/2,[1,2],
if(gcd(a,f)!=1,next);aa=lift(Mod(a/c,fn));la=(aa*c-a)/fn;
eps=kronecker(f,a);S=S+a*eps*(la+(1-c)/2));print
("m=",m,"    v_3(K_2Z)=",valuation(S,3)," T=",T))}

m=6      v_3(K_2Z)=1  T=[3]      m=69     v_3(K_2Z)=1  T=[3]
m=15     v_3(K_2Z)=1  T=[3]      m=74     v_3(K_2Z)=1  T=[9]
m=29     v_3(K_2Z)=1  T=[9]      m=77     v_3(K_2Z)=1  T=[3]
m=33     v_3(K_2Z)=1  T=[3]      m=78     v_3(K_2Z)=1  T=[3]
m=42     v_3(K_2Z)=3  T=[9]      m=79     v_3(K_2Z)=1  T=[9]
m=43     v_3(K_2Z)=2  T=[3]      m=82     v_3(K_2Z)=4  T=[3]
m=51     v_3(K_2Z)=1  T=[3]      m=83     v_3(K_2Z)=1  T=[3]
(...)
m=10187  v_3(K_2Z)=2  T=[81]     m=10673  v_3(K_2Z)=4  T=[27]
m=10239  v_3(K_2Z)=2  T=[3,9]    m=10771  v_3(K_2Z)=6  T=[9]
m=10281  v_3(K_2Z)=1  T=[27]     m=10842  v_3(K_2Z)=4  T=[3,3]
m=10297  v_3(K_2Z)=4  T=[3]      m=10942  v_3(K_2Z)=2  T=[3,243]
m=10351  v_3(K_2Z)=1  T=[243]    m=11062  v_3(K_2Z)=3  T=[27]
\end{verbatim}\ns

\section{Cyclic cubic fields}

\subsection{Computation of \texorpdfstring{$\order \BK_2(\BZ_K)[3^\infty]$, }{Lg}
\texorpdfstring{$\CT_{K, 3}$}{Lg} and \texorpdfstring{$\BC$}{Lg}}\label{C1}
We consider all cyclic cubic fields, which requires 
calculating the Artin group giving $\Gal(\Q(\mu_f^{})/K)$; indeed, a 
composite conductor $f$ gives rise to several fields, so that we must work from 
the defining polynomial depending on suitable integers $a$, $b$ such that 
$f = \frac{a^2+27\,b^2}{4}$ (another method would be to work in the group 
$(\Z/f\Z)^\times$, giving other difficulties). We compare 
$v_{\mathfrak m} \big(\order \BK_2(\BZ_K)[3^\infty] \big)$ to  
$\BC$ to see if we are, or not, in case of equality $v_{\mathfrak m} 
\big(\BL_3(s, \chi) \big) = \BC,\,\forall\,s \in \Z_3$.

\subsubsection{General program}
The program gives at first the list of all cyclic cubic fields, of 
conductor $f$ (in ${\sf f}$, ${\sf f \in [bf, Bf]}$), with a defining 
polynomial $P_K$ (in ${\sf PK}$). 
We compute for $p=3$ the structures of the $3$-class group (in ${\sf H}$) 
that of the $3$-torsion group (in ${\sf T}$) and $\order \BK_2(\BZ_K)[3^\infty]$
(in ${\sf \order K{\_{}\!\_{}} 2 Z}$). 

\smallskip
Except if the first layer of the
cyclotomic $\Z_3$-extension of $K$ is non-ramified (e.g., ${\sf f=657,\ 
P=x^3-219*x+730,\ H_K=[3, 3],\ T=[3]}$), then $\CT_{K, 3}=1$ implies 
$\CH_{K, 3} = \CR_{K, 3} = 1$ since $\CW_{K, 3}=1$:

\smallskip
\ft\begin{verbatim}
{bf=7;Bf=10^3;Q=y^2+y+1;Y=Mod(y,Q);for(f=bf,Bf,
if(isprime(f)==1,next);h=valuation(f,3);if(h!=0 & h!=2,next);
F=f/3^h;if(core(F)!=F,next);F=factor(F);Div=component(F,1);
d=matsize(F)[1];for(j=1,d,D=Div[j];if(Mod(D,3)!=1,break));
Df=component(factor(f),1);for(b=1,sqrt(4*f/27),
if(h==2 & Mod(b,3)==0,next);A=4*f-27*b^2;if(issquare(A,&a)==1,
if(h==0,if(Mod(a,3)==1,a=-a);PK=x^3+x^2+(1-f)/3*x+(f*(a-3)+1)/27);
if(h==2,if(Mod(a,9)==3,a=-a);PK=x^3-f/3*x-f*a/27);
nu=8;K=bnfinit(PK,1);HK=K.clgp;Kpn=bnrinit(K,3^nu);HKn=Kpn.cyc;
T=List;w=0;e=matsize(HKn)[2];for(k=1,e-1,CK=HKn[e-k+1];
v=valuation(CK,3);if(v>0,w=w+v;listput(T,3^v)));
C=omega(f)-1;if(Mod(f,3)==0,C=C-1);forprime(q=2,10^2*f,
if(Mod(3*f,q)!=0 & polisirreducible(PK+O(q))==1,q0=q;break));
A0=List;A1=List;A2=List;c=q0;n=3;fn=3^n*f;N=eulerphi(fn)/(2*3);
for(s=1,fn/2,if(gcd(s,f*3)!=1,next);for(k=0,fn,r=s+k*fn;
if(isprime(r)==0,next);if(polisirreducible(PK+O(r))==0,
listput(A0,s);break)));S0=0;S1=0;S2=0;g=(1-c)/2;
for(j=1,N,s=A0[j];listput(A1,lift(Mod(s*q0,fn))));
for(j=1,N,s=A1[j];listput(A2,lift(Mod(s*q0,fn))));
for(j=1,N,s=A0[j];t=lift(Mod(s/c,fn));ls=(t*c-s)/fn;S0=S0+s*(ls+g));
for(j=1,N,s=A1[j];t=lift(Mod(s/c,fn));ls=(t*c-s)/fn;S1=S1+s*(ls+g));
for(j=1,N,s=A2[j];t=lift(Mod(s/c,fn));ls=(t*c-s)/fn;S2=S2+s*(ls+g));
S=S0+Y*S1+Y^2*S2;K2=norm(S)/3;v=valuation(K2,3);
if(v==C,print("f=",Df," P=",PK," H=",HK[2],
" T=",T," #K_2Z=",3^v," C(s)=",C," Equality"));
if(v>C,print("f=",Df," P=",PK," H=",HK[2],
" T=",T," #K_2Z=",3^v," C(s)=",C," Inequality")))))}
\end{verbatim}\ns

\subsubsection{Numerical results} We only write an excerpt of the results:

\smallskip
\ft\begin{verbatim}
f=[9,7]  P=x^3-21*x-35       H=[3] T=[]   #K_2Z=1  C(s)=0 Equality
f=[9,7]  P=x^3-21*x+28       H=[3] T=[]   #K_2Z=1  C(s)=0 Equality        
f=[7,13] P=x^3+x^2-30*x-64   H=[3] T=[3]  #K_2Z=3  C(s)=1 Equality
f=[7,13] P=x^3+x^2-30*x+27   H=[3] T=[3]  #K_2Z=3  C(s)=1 Equality        
f=[9,19] P=x^3-57*x-152      H=[3] T=[3]  #K_2Z=3  C(s)=0 Inequality
f=[9,19] P=x^3-57*x+19       H=[3] T=[3]  #K_2Z=3  C(s)=0 Inequality
f=[7,31] P=x^3+x^2-72*x+209  H=[3] T=[3,3]#K_2Z=9  C(s)=1 Inequality
f=[7,31] P=x^3+x^2-72*x-225  H=[3] T=[3]  #K_2Z=3  C(s)=1 Equality
f=[9,37] P=x^3-111*x+370     H=[3] T=[3]  #K_2Z=3  C(s)=0 Inequality
f=[9,37] P=x^3-111*x+37      H=[3] T=[3]  #K_2Z=3  C(s)=0 Inequality
f=[13,31]P=x^3+x^2-134*x-597 H=[3] T=[3]  #K_2Z=3  C(s)=1 Equality
f=[13,31]P=x^3+x^2-134*x+209 H=[3] T=[3]  #K_2Z=3  C(s)=1 Equality
f=[7,61] P=x^3+x^2-142*x+601 H=[3] T=[3]  #K_2Z=3  C(s)=1 Equality
f=[7,61] P=x^3+x^2-142*x-680 H=[3] T=[9,9]#K_2Z=9  C(s)=1 Inequality
f=[7,67] P=x^3+x^2-156*x-799 H=[3] T=[3,3]#K_2Z=27 C(s)=1 Inequality
f=[7,67] P=x^3+x^2-156*x+608 H=[3] T=[3]  #K_2Z=3  C(s)=1 Equality
(...)
f=[9,73] P=x^3-219*x-1241 H=[3,3] T=[3] #K_2Z=3 C(s)=0 Inequality
\end{verbatim}\ns

\subsection{Computation of \texorpdfstring{$\wt \CH_{K, 3}$}{Lg}}\label{clog3}
 
We use a part of the previous program, adding the computation of the logarithmic 
class group and we write some data obtained up to conductors $\leq 10^3$, then 
for larger conductors. The first component of ${\sf bnflog(K, 3)}$ gives 
$\wt \CH_{K, 3}$, the second one (often trivial) is the sub-group 
$\wt {\rm cl} (\langle S_3\rangle)$ generated by $S_3$ and the third component 
is $\CH_{K, 3}^{S_3} = \CH_{K, 3}/{\rm cl} (\langle S_3\rangle)$; we have 
$\wt \CH_{K, 3}/\wt {\rm cl} (\langle S_3\rangle) \simeq \CH_{K, 3}^{S_3}$ \cite{BeJa}. 

\smallskip
\ft\begin{verbatim}
f=[19]     x^3+x^2-6*x-7      H=[]    T=[3]   CLog=[[],[],[]]
f=[7,13]   x^3+x^2-30*x-64    H=[3]   T=[3]   CLog=[[3],[],[3]]
f=[9,61]   x^3-183*x-854      H=[3]   T=[]    CLog=[[],[],[3]]
f=[19,37]  x^3+x^2-234*x-729  H=[2,6] T=[3,3] CLog=[[3],[3],[]]
f=[7,181]  x^3+x^2-422*x+3191 H=[3,3] T=[3,3] CLog=[[3,3],[],[3,3]]
(...)
f=[7,19,73,103] P=x^3+x^2-333342*x+73964960 
        H=[3,3,3,3] T=[3,3,3,3]     CLog=[[3,3,3,3],[],[3,3,3,3]]
f=[7,19,73,103] P=x^3+x^2-333342*x-69038901 
        H=[6,6,3]   T=[3,3,3,3]     CLog=[[3,3,3],[3],[3,3]]
f=[7,19,73,103] P=x^3+x^2-333342*x-49038361 
        H=[3,3,3]   T=[3,3,3]       CLog=[[3,3,3],[],[3,3,3]]
f=[397,251893] P=x^3+x^2-33333840*x+41296924413 
        H=[3]       T=[9,3,3]       CLog=[[3,3],[3,3],[]]
f=[7,14286007] P=x^3+x^2-33334016*x-56482638787 
        H=[54,18]   T=[27,27]       CLog=[[27,9],[],[27,9]]
f=[7,163,87643] P=x^3+x^2-33333554*x-214816239 
        H=[3,3]     T=List([3,3,3]) CLog=[[3,3],[3],[3]]
f=[100001053] P=x^3+x^2-33333684*x-17437220649 
        H=[]        T=List([9,3])   CLog=[[9,3],[9,3],[]]
f=[31,103,31321] P=x^3+x^2-33335984*x+68190607953 
        H=[6,6,3]   T=List([9,9,3]) CLog=[[9,3],[3],[3,3]]
\end{verbatim}\ns

\subsection{Computation of \texorpdfstring{$\order \BK_2(\BZ_K)[2^\infty]$
and $\CT_{K, 2}$}{Lg}}\label{C2}

We still compute $\order \BR_2 (\BZ_K) [2^\infty] \sim \frac{1}{2} \cdot \BL_2(-1, \chi) 
\times \frac{1}{2} \cdot \BL_2(-1, \chi^2)$, for prime conductors, noting that genus 
theory does not apply for cyclic cubic fields with $p=2$. We compute $\CT_{K, 2}$, 
at first with the classical program using ray class groups, then we use formula of 
Theorem \ref{Lp} in the following complete program (we only print an example 
of the six cases of structures that occur up to $f=3500$):

\smallskip
\ft\begin{verbatim}
{p=2;Q=y^2+y+1;Y=Mod(y,Q);forprime(f=7,5*10^5,
if(Mod(f,3)!=1,next);for(b=1,sqrt(4*f/27),A=4*f-27*b^2;
if(issquare(A,&a)==1,if(Mod(a,3)==1,a=-a);
P=x^3+x^2+(1-f)/3*x+(f*(a-3)+1)/27;K=bnfinit(P,1);nu=8;
Kpn=bnrinit(K,p^nu);HKn=Kpn.cyc;T=List;e=matsize(HKn)[2];
for(k=1,e-1,CK=HKn[e-k+1];v=valuation(CK,p);if(v>0,listput(T,p^v)));
g=znprimroot(f);c=lift(g);if(Mod(c,p)==0,c=c+f);n=8;ex=(f-1)/3;
cex=Mod(c,f)^ex;R0=0;R1=0;R2=0;T0=0;T1=0;T2=0;fn=p^(n+2)*f;
forstep(a=1,fn/2,2,if(Mod(a,f)==0,next);aa=lift(Mod(a/c,fn));
am=lift(Mod(a,fn)^-1);la=(aa*c-a)/fn;XR=a*(la+(1-c)/2);XT=am*la;
aex=Mod(a,f)^ex;if(aex==1,R0=R0+XR;T0=T0+XT);if(aex==cex,
R1=R1+XR;T1=T1+XT);if(aex==cex^2,R2=R2+XR;T2=T2+XT));
AR=lift(Mod(R0-R2,fn));BR=lift(Mod(R1-R2,fn));
AT=lift(Mod(T0-T2,fn));BT=lift(Mod(T1-T2,fn));
NR=norm(AR+Y*BR);wR=valuation(NR,p);
NT=norm(AT+Y*BT);wT=valuation(NT,p);print();print("f=",f," P=",P);
print("AR=",AR," BR=",BR,"  #R_2Z=",p^wR);print("T=",T);
print("AT=",AT," BT=",BT,"  #T=",p^wT))))}

f=7 P=x^3+x^2-2*x-1            f=739 P=x^3+x^2-246*x-520
  AR=114683 BR=6       #R_2Z=1   AR=7232 BR=4072        #R_2Z=64
  T=[]                           T=[8,8]
  AT=45707 BT=57070       #T=1   AT=2395776 BT=8911992     #T=64
  
f=31 P=x^3+x^2-10*x-8          f=2689 P=x^3+x^2-896*x+5876
  AR=24 BR=507870      #R_2Z=4   AR=1733440 BR=461096   #R_2Z=64
  T=[2,2]                        T=[16,16]
  AT=82976 BT=73118       #T=4   AT=29791312 BT=37866256  #T=256
  
f=277 P=x^3+x^2-92*x+236       f=3163 P=x^3+x^2-1054*x-13472
  AR=3256 BR=4537692  #R_2Z=16   AR=432 BR=51769200    #R_2Z=256
  T=[4,4]                        T=[16,16]
  AT=718480 BT=1119412   #T=16   AT=11946416 BT=40520656  #T=256
  
f=3457 P=x^3+x^2-1152*x+13700  f=6163 P=x^3+x^2-2054*x+17576
AR=286536 BR=3460600  #R_2Z=64   AR=6140160 BR=6195120 #R_2Z=256
T=List([16,16])                  T=List([8,8])
AT=2216784 BT=1012224   #T=256   AT=2754032 BT=3116408     #T=64
\end{verbatim}\ns

\smallskip
We know from \eqref{equalranks} that $\BR_2 (\BZ_K) [2^\infty]$ and $\CT_{K, 2}$
have same $2$-rank, but we see that most often the whole structures coincide 
(first exceptions $f = 2689, 3457$ and $f=6163$ giving an interesting case).

\subsection{Zeroes of \texorpdfstring{$p$}{Lg}-adic \texorpdfstring{$\BL$}{Lg}-functions 
and \texorpdfstring{$\BC(s)$}{Lg}}\label{C3}

A method, to study the influence of the zeroes, close to $s=1$, of $p$-adic 
$\BL$-functions, is to construct families of degree-$p$ cyclic fields $K_n$ such that 
the $p$-adic regulator tends to $0$ as $n \to \infty$, so $\BL_p(1, \chi_n) \to 0$ 
and $\order \CT_{K_n, p} \to 0$ from \eqref{T=HRW}; thus, invariants given by 
some $\BL_p(s_0^{}, \chi_n)$, $s_0^{} \in \Z_p^\times$, have the same behavior 
because of the role of $\BC(s) = \BC$ when $s \in \Z_p^\times$ (Theorem\ref{thmfond}).

\subsubsection{The Lecacheux--Washington cyclic cubic fields}
The following family of cyclic cubic fields $K$ illustrates, for $p=3$, the fact that, 
in a genus theory context, the strict inequality (Theorem \ref{thmfond}\,{\bf (b)}):
$$v_{\mathfrak m} \big(\ffrac{1}{2} \BL_3(s, \chi) \big) > \BC,\,\forall\,s \in \Z_3, $$ 
can be largely exceeded because of $\BL_3(1, \chi)$ very close to $0$:

\smallskip
Using the family of cubic polynomials \cite{Lec, W4}:
$$P_n=x^3-(n^3-2n^2+3n-3) x^2-n^2 x-1, $$ 
for some values of $n \to 1$ in $\Z_3$, one obtains $3$-adic $\BL$-functions having 
a zero near $s=1$, which of course gives large modules $\CT_{K_n, 3}$. 
The required conditions are given in \cite[Theorem 3]{W4}.

\subsubsection{Numerical examples}
The constant $\BC$ associated to these fields is always $2$, so that we must obtain 
$v_3(\CT_{K, 3}) > 2$ as soon as $n$ is close to $1$ in $\Z_3$.
We give the program and some examples suggesting that $\order \CT_{K_n, 3}$ is 
unbounded in such families, as $n \to \infty$; but these fields have an huge discriminant 
$D_n$ and the function $\mathcal C_p(K_n) := \ds \frac{\log(\order \CT_{K_n, p})}
{\log(\sqrt {D_n})}$, that we have introduced in \cite{Gra9}, is rather small and fulfills 
the various conjectures given in this article:

\smallskip
\ft\begin{verbatim}
{M=3^5;forstep(n=1,10^6,M,if(isprime(n^2+3)==0,next);
A=n^2-3*n+3;if(numdiv(A)!=4,next);F=factor(A);
p=component(F,1)[1];q=component(F,1)[2];
if(Mod(p^2,9)==1 || Mod(q^2,9)==1,next);
P=x^3-(n^3-2*n^2+3*n-3)*x^2-n^2*x-1;K=bnfinit(P,1);
nu=18;HK=K.clgp;Kpn=bnrinit(K,3^nu);HKn=Kpn.cyc;
T=List;w=0;e=matsize(HKn)[2];for(k=1,e-1,CK=HKn[e-k+1];
v=valuation(CK,3);if(v>0,w=w+v;listput(T,3^v)));
print("n=",n," p=",p," q=",q," T=",T,"    v_3(T)=",w))}

n=2674   p=43     q=166099     T=[3,81,243]       v_3(T)=10
n=40096  p=3271   q=491461     T=[3,729,729]      v_3(T)=13
n=43498  p=6547   q=288979     T=[3,81,243]       v_3(T)=10
n=50788  p=28111  q=91753      T=[3,243,243]      v_3(T)=11
n=56134  p=3613   q=872089     T=[3,729,2187]     v_3(T)=14
n=76546  p=31     q=189001951  T=[3,2187,2187]    v_3(T)=15
n=78490  p=7      q=880063519  T=[3,81,243]       v_3(T)=10
n=96958  p=79     q=118994467  T=[3,9,243,729]    v_3(T)=14
n=124660 p=7      q=2219963089 T=[3,6561,6561]    v_3(T)=17
n=158194 p=135301 q=184957     T=[3,243,729]      v_3(T)=12
(...)
n=570808 p=643    q=506718601  T=[3,6561,19683]   v_3(T)=18
n=649540 p=229    q=1842359227 T=[3,3,19683,59049]v_3(T)=21
\end{verbatim}\ns

\noindent
Note that $n=570808 \equiv 1 \! \pmod {3^9}$ and $n=649540 \equiv 1\! \pmod {3^{10}}$.

\medskip
For $M = 3$, only giving $n \equiv 1 \pmod 3$, we obtain many equalities:

\smallskip
\ft\begin{verbatim}
n=340  p=7     q=16369   T=[3,3]     v_3(T)=2
n=736  p=79    q=6829    T=[3,3]     v_3(T)=2
n=970  p=7     q=133999  T=[3,3]     v_3(T)=2
(...)
\end{verbatim}\ns

\smallskip
For $M = 9$, giving $n \equiv 1 \pmod 9$, we never obtain
a valuation $2$, but minimal valuations are $4$:

\smallskip
\ft\begin{verbatim}
n=874  p=7     q=108751  T=[3,3,9]   v_3(T)=4
n=2926 p=1777  q=4813    T=[3,3,9]   v_3(T)=4
n=4042 p=3067  q=5323    T=[3,3,9]   v_3(T)=4
(...)
\end{verbatim}\ns

More generally,  the minimal valuations are $2\,v_3(M)$.

\section{Numerical results for \texorpdfstring{$p \geq 5$}{Lg}}

When $p \geq 5$ and for a cyclic $p$-extension $K/\Q$, the trick used
for $p \in \{2, 3\}$ does not exist and we only have available the complex
analytic formulas of $\order \BK_2(\BZ_K)$ and the rank formula
\eqref{rankR} giving Theorem \ref{pdiv}.

\subsection{Computation of \texorpdfstring{$\order \BK_2(\BZ_K)[p^\infty]$}{Lg}}
\label{D1}

The computation of the order of $\BK_2(\BZ_K)$ uses Bernoulli 
polynomials \cite[Theorem 4.2 using Proposition 4.1]{W}.

\smallskip
The following program computes the $p$-valuation of the product
$\prod_{\chi \ne \1} \BL(-1, \chi)$ for the $p-1$ characters
$\chi$ of order $p$ and prime conductor$\ell$ with the formula
$\BL(-1, \chi) \sim \sum_{a=1}^{\ell-1} \chi(a)\,(a^2-\ell a)$,   
where $\chi(a)$ is computed from $\chi(g) := \zeta_p$, where $g$ is a 
primitive root modulo $\ell$ and from the writing $a \equiv g^k \pmod \ell$
for $k \in [1, \ell-1]$; by comparison we compute the structure of $\CT_{K, p}$.
The case $p=3$ is included to recall that it is particular and may give trivial
modules; moreover, for $p>3$, equality \eqref{equalranks} does not apply
in general:

\smallskip
\ft\begin{verbatim}
{forprime(p=3,100,Q=polcyclo(p);X=Mod(x,Q);print();
forprime(ell=1,10^4,if(Mod(ell,p)!=1,next);g=znprimroot(ell);
L=0;for(k=1,ell-1,a=lift(g^k);E=a^2-a*ell;L=L+E*X^k);
w=valuation(norm(L),p);P=polsubcyclo(ell,p);K=bnfinit(P,1);
nu=8;Kpn=bnrinit(K,p^nu);HKn=Kpn.cyc;T=List;e=matsize(HKn)[2];
for(k=1,e-1,c=HKn[e-k+1];v=valuation(c,p);if(v>0,listput(T,p^v)));
print("p=",p," ell=",ell," T=",T," v(K_2Z)=",w)))}

p=3 ell=7     T=[]     v(K_2Z)=0    p=11 ell=23   T=[]     v(K_2Z)=1
p=3 ell=19    T=[3]    v(K_2Z)=1    p=11 ell=727  T=[11]   v(K_2Z)=1
p=3 ell=199   T=[3,3]  v(K_2Z)=2    p=11 ell=1321 T=[]     v(K_2Z)=3
p=3 ell=4177  T=[9,9]  v(K_2Z)=3    p=11 ell=1453 T=[11]   v(K_2Z)=1
p=3 ell=2593  T=[3,9]  v(K_2Z)=6    p=11 ell=3631 T=[11]   v(K_2Z)=2
p=3 ell=21997 T=[27,81]v(K_2Z)=7    p=11 ell=4357 T=[11,11]v(K_2Z)=1
p=5 ell=11    T=[]     v(K_2Z)=1    p=13 ell=53   T=[]     v(K_2Z)=1
p=5 ell=101   T=[5,5]  v(K_2Z)=1    p=13 ell=677  T=[13]   v(K_2Z)=1
p=5 ell=181   T=[]     v(K_2Z)=3    p=13 ell=1483 T=[]     v(K_2Z)=2
p=5 ell=401   T=[5,5]  v(K_2Z)=2    p=13 ell=2029 T=[13]   v(K_2Z)=1
p=5 ell=3001  T=[5]    v(K_2Z)=7    p=13 ell=6761 T=[13]   v(K_2Z)=4
p=5 ell=5351  T=[5,5,5]v(K_2Z)=1    p=13 ell=11831T=[13]   v(K_2Z)=2
p=7 ell=29    T=[]     v(K_2Z)=1    p=17 ell=103  T=[]     v(K_2Z)=2
p=7 ell=127   T=[]     v(K_2Z)=2    p=17 ell=137  T=[]     v(K_2Z)=1
p=7 ell=197   T=[7,7]  v(K_2Z)=1    p=17 ell=3469 T=[17]   v(K_2Z)=1
p=7 ell=491   T=[7]    v(K_2Z)=1    p=17 ell=3571 T=[]     v(K_2Z)=1
p=7 ell=4159  T=[]     v(K_2Z)=3    p=17 ell=3673 T=[]     v(K_2Z)=1
p=7 ell=4229  T=[]     v(K_2Z)=4    p=17 ell=3911 T=[]     v(K_2Z)=1
\end{verbatim}\ns

\smallskip
For $p$ large, the $p$-valuation is almost often $1$ with a trivial $\CT_{K, p}$.

\subsection{Computations of \texorpdfstring{$\rk_5 (\BK_2(\BZ_K))$}{Lg}}
\label{D2}

The next programs compute the $\omega^{-1}$-component of the $5$-class 
group $\CH_{K_2}$ of $K_2 := K(\mu_5^{})$, for degree-$5$ cyclic 
fields $K$, of prime conductor in the first section, then with $f=11 \cdot 31$ 
in the second section, thus giving the $5$-rank 
of $\BK_2\BZ_K[5^\infty]$  (formula \eqref{rankR}, where $\langle S_5(K') \rangle$
is of character $\chi_0^{}$).

\smallskip
The program computes the $5$-class group $\CH_{K_1}$ of 
$K_1 := K(\sqrt 5$), so that
$\CH_{K_2} = \CH_{K_1} \plus\  (\CH_{K_2})_{\omega^{-1}} \plus\ 
(\CH_{K_2})_{\omega}$,  
where $\rk_5 (\CH_{K_2})_{\omega} = \rk_5(\CT_{K, 5})$ (formula \eqref{rankT}), 
where the structure of $\CT_{K, 5}$ is computed as usual via the program
of Section \ref{A1}. Whence:
$$\rk_5 (\BK_2(\BZ_K)) = \rk_5 (\CH_{K_2})-\rk_5 (\CH_{K_1}) 
- \rk_5 (\CT_{K, 5}). $$

\subsubsection{Case of prime conductors}
So, $f$ is a prime number $\ell \equiv 1 \pmod 5$; thus, $\CH_K = 1$.
We write an excerpt of each structures obtained:

\smallskip
\ft\begin{verbatim}
{p=5;forprime(ell=11,10^5,if(Mod(ell,p)!=1,next);
P=polsubcyclo(ell,p);print("conductor ell=",ell," P=",P);nu=8;
K=bnfinit(P,1);HK=K.clgp[2];Kpn=bnrinit(K,p^nu);HKn=Kpn.cyc;
TK=List;w=0;e=matsize(HKn)[2];for(k=1,e-1,Ck=HKn[e-k+1];
v=valuation(Ck,p);if(v>0,listput(TK,p^v)));
R1=polcompositum(P,polsubcyclo(p,2))[1];LK1=bnfinit(R1,1);
R2=polcompositum(P,polcyclo(p))[1];LK2=bnfinit(R2,1);
print("TK=",TK," HK1=",LK1.clgp[2]," HK2=",LK2.clgp[2])}

conductor ell=11 P=x^5+x^4-4*x^3-3*x^2+3*x+1
    TK=[]     HK1=[]   HK2=[5]
conductor ell=31 P=x^5+x^4-12*x^3-21*x^2+x+5
    TK=[]     HK1=[]   HK2=[10,10,2,2]
conductor ell=101 P=x^5+x^4-40*x^3+93*x^2-21*x-17
    TK=[5,5]  HK1=[]   HK2=[2005,5,5]
conductor ell=151 P=x^5+x^4-60*x^3-12*x^2+784*x+128
    TK=[5]    HK1=[]   HK2=[5305,5]
conductor ell=251 P=x^5+x^4-100*x^3-20*x^2+1504*x+1024
    TK=[5]    HK1=[]   HK2=[11810,10,2,2]
conductor ell=281 P=x^5+x^4-112*x^3-191*x^2+2257*x+967
    TK=[]     HK1=[5]  HK2=[261005,5,5]
conductor ell=401 P=x^5+x^4-160*x^3+369*x^2+879*x-29
    TK=[5,5]  HK1=[]   HK2=[42505,5,5,5]
conductor ell=421 P=x^5+x^4-168*x^3+219*x^2+3853*x-3517
    TK=[]     HK1=[5]  HK2=[225915,15,3,3]
\end{verbatim}\ns

\smallskip
The rank $2$ is obtained for $\ell = 31, 281, 401$.

\subsubsection{An example of composite conductor}
The next program computes the $\omega^{-1}$-component  
for the four degree-$5$ cyclic fields $K$ of conductor $f=11 \cdot 31$ 
(for which $\CT_{K, 5} = 1$). One may replace ${\sf f=11*31}$ by any 
product of primes $\ell_1 \equiv \ell_2 \equiv 1 \pmod 5$):

\smallskip
\ft\begin{verbatim}
{p=5;f=11*31;LP=polsubcyclo(f,p);d=matsize(LP)[2];
for(i=1,d,P=LP[i];D=nfdisc(P);if(D!=f^(p-1),next);
print();print(P);K=bnfinit(P,1);HK=K.clgp[2];
nu=8;Kpn=bnrinit(K,p^nu);HKn=Kpn.cyc;
TK=List;w=0;e=matsize(HKn)[2];for(k=1,e-1,Ck=HKn[e-k+1];
v=valuation(Ck,p);if(v>0,listput(TK,p^v)));
R1=polcompositum(P,polsubcyclo(p,2))[1];LK1=bnfinit(R1,1);
R2=polcompositum(P,polcyclo(p))[1];LK2=bnfinit(R2,1);
print("HK=",HK," TK=",TK," HK1=",LK1.clgp[2]," HK2=",LK2.clgp[2])}

x^5-x^4-136*x^3+641*x^2-371*x-67
    HK=[5]  TK=[]  HK1=[5,5]  HK2=[218305,5,5,5,5]
x^5-x^4-136*x^3-723*x^2-1053*x-67
    HK=[5]  TK=[]  HK1=[5,5]  HK2=[145505,5,5,5,5]
x^5-x^4-136*x^3-41*x^2+3039*x-1431
    HK=[5]  TK=[]  HK1=[5,5]  HK2=[3805,5,5,5,5,5,5]
x^5-x^4-136*x^3+300*x^2+2016*x-3136
    HK=[5]  TK=[]  HK1=[5,5]  HK2=[12605,5,5,5,5,5]
\end{verbatim}\ns

\smallskip
The third case gives $\rk_5((\BK_2\BZ_K[5^\infty]_{\omega^{-1}}) = 4$
and the last one gives the rank $3$.
With $f=11 \cdot 101$, one obtains interesting structures, giving 
the ranks $2, 2, 4, 3$, respectively:

\smallskip
\ft\begin{verbatim}
x^5-x^4-444*x^3+3644*x^2+80*x-32000
    HK=[5]  TK=[5]  HK1=[5,5]  HK2=[1196410,10,10,10,5]
x^5-x^4-444*x^3+311*x^2+32299*x+26883
    HK=[5]  TK=[5]  HK1=[5,5]  HK2=[15020305,5,5,5,5]
x^5-x^4-444*x^3-1911*x^2+12301*x-8669
    HK=[5]  TK=[5]  HK1=[5,5]  HK2=[1405,1405,5,5,5,5,5]
x^5-x^4-444*x^3+311*x^2+43409*x+4663
    HK=[5]  TK=[5]  HK1=[5,5]  HK2=[5559605,5,5,5,5,5]
\end{verbatim}\ns

\smallskip
These ranks may indeed be larger than the number of tame ramified primes; 
this illustrates Theorem \ref{pdiv}.

\subsection{Computation of \texorpdfstring{$\BL(-1-(p-3)\,p^n, \chi)$}{Lg}}\label{D3}

Let $K/\Q$ be a degree-$p$ cyclic extension. For $p \geq 5$ the conditions 
\eqref{flp} ($m>1$ and $m \equiv 0 \pmod{p-1}$) are not 
fulfilled for $m=2$, which does not allow to express $\BL(-1, \chi)$ by means of 
$\BL_p$-function at $s = -1$. But we may compute, with:
$$m := 2+(p-3) p^n > 1\ \,{\rm and}\ \,(-1)_n := 1- m = -1-(p-3)p^n, $$
the following expression:
\begin{equation*}
\left \{\begin{aligned}
\BL_p(-1-(p-3)\,p^n, \chi) & = (1-p^{1+(p-3)\,p^n})\,\BL((-1)_n, \chi)) \\
& \sim \BL_p(-1, \chi),\ \,\hbox{for $n$ large enough}.
\end{aligned}\right.
\end{equation*}

\noindent
This does not give $\order \BK_2(\BZ_K)[p^\infty]$ but is related to 
$\order \BK_{2m-2}(\BZ_K)[p^\infty]$ of higher $\BK$-theory, from 
Quillen--Lichtenbaum conjecture, via the $\BL((-1)_n, \chi)$'s \cite{Ko1, Ng2}:
$$\zeta_K^{} ((-1)_n) = \zeta_K^{} (1-m) = \pm \frac{\order \BK_{2m-2}(\BZ_K)}{w_m(K)}, $$
where $w_m(K)$ is the largest integer $N$ such that $[K(\zeta_N^{}) : K] \mid m$.
One computes that for a degree-$p$ cyclic extension $K/\Q$ for $p \ne 2$, $w_m(K) \sim p$, 
which will give $v_p(\order \BK_2(\BZ_K)[p^\infty] = v_p(\zeta_K^{} (1-m))+1$.

\smallskip
Indeed, the condition $m \equiv 0 \pmod {(p-1)}$ of definition \eqref{flp} is fulfilled 
for $m = 2+(p-3) p^n$, so that relation between $p$-adic and complex 
$\BL$-functions does exist and leads, for $n$ large enough, by continuity
of $\BL_p(s, \chi)$, to
$\prod_{\chi \ne \1}\,\BL((-1)_n, \chi) \sim \prod_{\chi \ne \1}\,\BL_p(-1, \chi)$, 
since the Euler factors $1-p^{m-1} \chi(p)$ are $p$-adic units. 

\smallskip
Thus, formula of Theorem \ref{Lp} for $s = (-1)_n$ makes sense for 
computing $\BL((-1)_n, \chi)$, of constant valuation for $n \geq n_0$, 
and only depending on the computation of $\BL_p(-1, \chi)$ related to 
$\order \BK_2(\BZ_K)[p^\infty]$ and, in some sense, to $\order \CT_{K, p}$.

\smallskip
The complex analytic computation of $\BL((-1)_n, \chi)$ is done by the 
following program, with $m = 2+(p-3)p^n$, but with less immediate 
Bernoulli polynomials; so we use their definition with series;
we give also the computation of the structure of $\CT_{K, p}$:

\smallskip
\ft\begin{verbatim}
{n=2;forprime(p=5,50,m=2+(p-3)*p^n;print();forprime(ell=1,60000,
if(Mod(ell,p)!=1,next);Q=polcyclo(p);X=Mod(x,Q);g=znprimroot(ell);
P=polsubcyclo(ell,p);K=bnfinit(P,1);nu=8;Kpn=bnrinit(K,p^nu);
HKn=Kpn.cyc;TK=List;w=0;e=matsize(HKn)[2];for(k=1,e-1,
Ck=HKn[e-k+1];v=valuation(Ck,p);if(v>0,listput(TK,p^v)));
Factm=1;for(i=1,m,Factm=i*Factm);
B=(x+O(x^(m+2)))*exp(x*y/ell+O(x^(m+2)))/(exp(x+O(x^(m+2)))-1);
Bm=ell^(m-1)/m*Factm*polcoeff(B,m);L=0;dm=poldegree(Bm);
for(k=1,ell-1,a=lift(g^k);E=0;for(j=0,dm,cm=polcoeff(Bm,j);
E=E+cm*a^j);L=L+E*X^k);vm=valuation(norm(L),p);
print("p=",p," ell=",ell," T=",TK," v(K_(2m-2)Z)=",vm+1)))}
\end{verbatim}\ns

\smallskip
For instance, for $p=5$ and cyclic quintic fields of prime conductor$\ell$, 
this gives the following examples where $m=52$:

\smallskip
\ft\begin{verbatim}
ell=101 T=[5,5] v(K_(2m-2)Z)=3   ell=401  T=[5,5]     v(K_(2m-2)Z)=3
ell=151 T=[5]   v(K_(2m-2)Z)=2   ell=5351 T=[5,5,5]   v(K_(2m-2)Z)=4
ell=251 T=[5]   v(K_(2m-2)Z)=2   ell=29251T=[5,5,5,5] v(K_(2m-2)Z)=5
\end{verbatim}\ns

\smallskip
In the selected interval, $\order \CT_{K, 5}$ and $\prod_{\chi \ne \1} \BL_5(-1, \chi)$ 
have same valuation, giving $v_5(\order \BK_2(\BZ_K)) = v_5(\order \CT_{K, 5})+1$, 
whereas $\order \CT_{K, 5}$ is also obtained $p$-adically, 
replacing $\langle a \rangle \cdot \theta^{-1}\chi(a)$ by $a^{-1}\,\chi(a) = 
\langle a \rangle^{-1} \theta^{-1}\chi(a)$ in formula of Theorem \ref{Lp}. 
However, the minimal $\ell$ giving distinct $5$-valuations is given by the following data:

\smallskip
\ft\begin{verbatim}
ell=56401   T=[5,5,5,25]   v(T)=5   v(K_(2m-2)Z)=7
S0=173625530 S1=5807338145*Y S2=-7744983390*Y^2 S3=-4753111575/2*Y^3  
S4=-8281263805/2*Y^3-8281263805/2*Y^2-8281263805/2*Y-8281263805/2
S=-6517187690*Y^3-23771230585/2*Y^2+3333412485/2*Y-7934012745/2
 =5*(-1303437538*Y^3-4754246117/2*Y^2+666682497/2*Y-1586802549/2)
norm(S)=5^4*539252420555567630791570203161974917625/16=5^7*u
R=norm(S)/p=5^6*u
\end{verbatim}\ns

\smallskip
Computing $\BL_5((-1)_n, \chi)$ by means of the
formula of Theorem \ref{Lp} gives again the results:

\smallskip
\ft\begin{verbatim}
{n=2;p=5;m=2+(p-3)*p^n;Q=polcyclo(p);X=Mod(x,Q);
forprime(ell=1,5351,if(Mod(ell,p)!=1,next);fn=p*p^n*ell;
g=znprimroot(ell);c=lift(g);if(Mod(c,p)==0,c=c+ell);S=0;
for(a=1,fn/2,if(gcd(a,p*ell)!=1,next);aa=lift(Mod(a/c,fn));
la=(aa*c-a)/fn;A=lift(Mod(a,fn)^(m-1));u=znlog(Mod(a,ell),g);
S=S+(la+(1-c)/2)*A*X^u);NS=norm(S);vm=valuation(NS,p)-1;
print("p=",p," ell=",ell," v(K_(2m-2)Z)=",vm+1))}
ell=101 v(K_(2m-2)Z)=3    ell=401 v(K_(2m-2)Z)=3
ell=151 v(K_(2m-2)Z)=2    ell=601 v(K_(2m-2)Z)=2
ell=251 v(K_(2m-2)Z)=2    ell=701 v(K_(2m-2)Z)=3
\end{verbatim}\ns

\end{appendix}


\begin{thebibliography}{xxxxxxxx}

\bibitem[AAM2021]{AAM} J. Assim, H. Asensouyis, Y. Mazigh  {\it A genus formula for 
the positive \'etale wild kernel}, J. Number Theory {\bf 218} (2021), 161--179. \\
\url{https://doi.org/10.1016/j.jnt.2020.07.013}

\bibitem[AF1972]{AF} Y. Amice, J. Fresnel, {\it  Fonctions z\^eta $p$-adiques des corps de
nombres ab\'eliens r\'eels}, Acta Arith. {\bf 20} (1972), 353--384. \\
\url{https://doi.org/10.24033/msmf.37}

\bibitem[Ba1968]{Ba} H. Bass, {\it Algebraic ${\rm K}$-theory}, Mathematics Lecture Note Series, 
Benjamin, New York, Amsterdam 1968. \\
\url{https://www.maths.ed.ac.uk/v1ranick/papers/bass.pdf}

\bibitem[BeJa2016]{BeJa} K. Belabas, J-F. Jaulent, {\it The logarithmic class group 
package in PARI/GP}, Pub. Math. Besan\c con (2016), 5--18. \\
\url{https://doi.org/10.5802/pmb.o-1}

\bibitem [Ber1990]{Ber} R.I. Berger, {\it  Quadratic extensions of number fields with 
elementary abelian $2$-prim ${\rm K}_2(O_F)$ of smallest rank}, 
Jour. Number Theory {\bf 34} (1990) 284--292. 
\url{https://doi.org/10.1016/0022-314X(90)90138-H}

\bibitem[BGr1992]{BGr} R.I. Berger, G. Gras, {\it  Regular fields: normic criteria in 
$p$-extensions}, Publ. Math. Fac. Sci. Besan\c con (Th\'eorie des Nombres), 
Ann\'ees 1991/92 (1992), 1--5.. 
\url{https://doi.org/10.5802/pmb.a-67}

\bibitem[BN2005]{BN} J-R. Belliard, T. Nguyen Quang Do,  {\it On modified circular
units and annihilation of real classes}, Nagoya Math. J. {\bf 177} (2005), 77--115.\\
\url{https://doi.org/10.1017/S0027763000009065}

\bibitem[BP1972]{BP} F. Bertrandias, J-J. Payan, {\it  $\Gamma$-extensions et invariants
cyclotomiques}, Ann. Sci. Ec. Norm. Sup. {\bf 4}(5) (1972), 517--548. \\
\url{https://doi.org/10.24033/asens.1236}

\bibitem[Br1992]{Br1} J. Browkin, {\it On the p-rank of the tame kernel of algebraic 
number fields}, J. Reine Angew. Math. {\bf 432} (1992), 135--149. \\
\url{https://doi.org/10.1515/crll.1992.432.135}

\bibitem[Br2005]{Br2} J. Browkin, {\it Tame kernels of cubic cyclic fields}, 
 Math. Comp. {\bf 74} (2005), 967--999. \\
\url{https://doi.org/10.1090/S0025-5718-04-01726-0}

\bibitem[BS1982]{BS} J. Browkin, A. Schinzel,  {\it On Sylow $2$-subgroups of $K_2O_F$ 
for quadratic number fields $F$}, J. Reine Angew. Math. {\bf 331} (1982), 104--113.\\
\url{https://eudml.org/doc/152417}

\bibitem[Che1933]{Che} C. Chevalley,  {\it Sur la th\'eorie du corps de classes 
dans les corps finis et les corps locaux}, Th\`ese no. 155, Jour. of the Faculty 
of Sciences Tokyo, \textbf{2} (1933), 365--476.  \\
\url{http://archive.numdam.org/item/THESE_1934__155__365_0/}

\bibitem[Co1975]{C} J. Coates, {\it $p$-adic $L$-functions and Iwasawa's theory}, 
Algebraic number fields: $L$-functions and Galois properties (Proc. Sympos., 
Univ. Durham, Durham, 1975), Academic Press, London (1977), 269--353.\\
\url{https://doi.org/10.17863/CAM.72005}

\bibitem[Col1988]{Col} P. Colmez, {\it  R\'esidu en $s = 1$ des fonctions z\^eta 
$p$-adiques}, Invent. Math. {\bf 91} (1988), 371--389. 
\url{https://eudml.org/doc/143545}

\bibitem[DeLi2023]{DL} L-T. Deng, Y-X. Li, {\it An application of Birch--Tate formula 
to tame kernels of real quadratic number fields}. \\
\url{https://doi.org/10.48550/arXiv.2307.05895}

\bibitem[DR1980]{DR} P. Deligne, K.A. Ribet, {\it Values of Abelian $L$-functions 
at Negative Integers over Totally Real Fields}, Invent. math. {\bf 59} (1980), 227--286. \\
\url{https://eudml.org/doc/142740}

\bibitem[Fr1965]{Fre} J. Fresnel, {\it Nombres de Bernoulli et fonctions $L$ $p$-adiques}, 
Ann. Sci. Inst. Fourier {\bf 17}(2) (1967), 281--333. 
\url{https://doi.org/10.5802/aif.271} \\
S\'eminaire Delange--Pisot--Poitou (Th\'eorie des nombres), Tome {\bf 7}, 
(1965-1966), no. 2, Expos\'e no. 14, 1--15. \\
\url{http://www.numdam.org/item?id=SDPP_1965-1966__7_2_A3_0} 

\bibitem[Gar1971]{Gar} H. Garland, {\it A finiteness theorem for $K_{2}$ of a 
number field}, Ann. of Math. (2) {\bf 94} (1971), 534--548. 
\url{https://doi.org/10.2307/1970769}

\bibitem[Gra1978]{Gr1} G. Gras, {\it Sur la construction des fonctions $L$ $p$-adiques 
ab\'eliennes}, S\'eminaire Delange--Pisot--Poitou (Th\'eorie des nombres), Tome {\bf 20} 
(1978--1979), no. 2 , Expos\'e no. 22, 1--20. \\
\url{http://www.numdam.org/item?id=SDPP_1978-1979__20_2_A1_0}

\bibitem[Gra1979]{Gr2} G. Gras, {\it Annulation du groupe des $\ell$-classes 
g\'en\'eralis\'ees d'une extension ab\'elienne r\'eelle de degr\'e premier \`a $\ell$}, 
Ann. Sci. Inst. Fourier {\bf 29}(1) (1979), 15--32. 
\url{https://doi.org/10.5802/aif.725}

\bibitem[Gra1986${}^a$]{Gr3} G. Gras, {\it Remarks on $K_2$ of number fields}, Jour. 
Number Theory {\bf 23} (1986), 322--335. \\
\url{https://doi.org/10.1016/0022-314X(86)90077-6}

\bibitem[Gra1986${}^b$]{Gr33} G. Gras, {\it Th\'eorie des genres analytique des 
fonctions $L$ $p$-adiques des corps totalement r\'eels}, Invent. math. {\bf 86}
(1986), 1--17. \\
\url{https://doi.org/10.1007/BF01391492}

\bibitem[Gra1987]{Gr34} G. Gras, {\it Pseudo-mesures associ\'ees aux fonctions
$L$ de $\Q$}, manuscripta math. {\bf 57} (1987), 373--415.\\
\url{https://doi.org/10.1007/BF01168668}

\bibitem[Gra1998]{Gr4} G. Gras, {\it  Th\'eor\`emes de r\'eflexion}, J. Th\'eorie des 
Nombres de Bordeaux  {\bf 10}(2) (1998), 399--499. 
\url{http://eudml.org/doc/248168}

\bibitem[Gra2005]{Gr5} G. Gras,  {\it Class Field Theory: from theory to practice}, 
corr. 2nd ed., Springer Monographs in Mathematics, Springer, 2005, xiii+507 pages. \\
\url{https://doi.org/10.1007/978-3-662-11323-3}

\bibitem[Gra2016]{Gr55} G. Gras, {\it Les $\theta$-r\'egulateurs locaux d'un nombre alg\'ebrique :
Conjectures $p$-adiques}, Canadian Journal of Mathematics {\bf 68}(3) (2016), 571--624.
\url{http://dx.doi.org/10.4153/CJM-2015-026-3} \\
English translation: \url{https://arxiv.org/pdf/1701.02618.pdf}

\bibitem[Gra2017${}^a$]{Gr6} G. Gras, {\it On $p$-rationality of number fields. 
Applications -- PARI/GP programs}, Publ. Math. Fac. Sci. Besan\c con
(Th\'eorie des Nombres), Ann\'ees 2017/2018 (2019), 29--51.
\url{https://doi.org/10.5802/pmb.35}

\bibitem[Gra2017${}^b$]{Gr7}  G. Gras, {\it Invariant generalized ideal 
classes--Structure theorems for $p$-class groups in $p$-extensions}, 
Proc. Indian Acad. Sci. (Math. Sci.), {\bf 127}(1) ( 2017), 1--34. \\
\url{https://doi.org/10.1007/s12044-016-0324-1}

\bibitem[Gra2018${}^b$]{Gra8} G. Gras, {\it The $p$-adic Kummer--Leopoldt Constant: 
Normalized $p$-adic Regulator}, Int. J. Number Theory {\bf 14}(2) (2018), 329--337. \\
\url{https://doi.org/10.1142/S1793042118500203}

\bibitem[Gra2019$^a$]{Gra88} G. Gras, {\it Practice of the Incomplete $p$-Ramification 
Over a Number Field -- History of Abelian $p$-Ramification}, 
Communications in Advanced Mathematical Sciences {\bf 2}(4) (2019), 251--280.\\
\url{https://doi.org/10.33434/cams.573729}

\bibitem[Gra2019$^b$]{Gra9} G. Gras, {\it Heuristics and conjectures in direction of  
a $p$-adic Brauer--Siegel theorem}, Math. Comp. {\bf 88}(318) (2019), 1929--1965.  \\
\url{https://doi.org/10.1090/mcom/3395}

\bibitem[Gra2023]{Gra10} G. Gras, {\it The Chevalley--Herbrand formula and the real 
abelian Main Conjecture (New criterion using capitulation of the class group)}, 
\emph {J. Number Theory} {\bf 248} (2023), 78--119. \\
\url{https://doi.org/10.1016/j.jnt.2023.01.002}

\bibitem[GJ1989]{GJ} G. Gras, J-F. Jaulent, {\it  Sur les corps de nombres r\'eguliers}, 
Math. Z. {\bf 202} (1989), 343--365. 
\url{http://eudml.org/doc/174095}

\bibitem[GJN2016]{GJN} {\it Sur la capitulation du module de Bertrandias--Payan}, 
In: Publ. Math. Fac. Sci. Besan\c con (Alg\`ebre et Th\'eorie des Nombres) (2016): \par
G. Gras, {\it Sur le module de Bertrandias--Payan dans une $p$-extension--Noyau de 
capitulation}, 25--44. \url{https://doi.org/10.5802/pmb.o-3}\\
J-F. Jaulent, {\it Sur la capitulation pour le module de Bertrandias--Payan}, 45--58.
\url{https://doi.org/10.5802/pmb.o-4} \\
T. Nguyen Quang Do, {\it Descente galoisienne et capitulation entre modules de 
Bertrandias--Payan}, 59--79. \url{https://doi.org/10.5802/pmb.o-5}

\bibitem[HuK1998]{HK} J. Hurrelbrink, M. Kolster, {\it Tame kernels under relative 
qua\-dratic extensions and Hilbert symbols}, J. Reine Angew. Math. {\bf 499} (1998), 145--188. 
\url{https://doi.org/10.1515/crll.1998.056}

\bibitem[Hur1987]{Hur} J. Hurrelbrink,  {\it Class numbers, units and $K_2$}, Algebraic 
$K$-theory: connections with geometry and topology (Lake Louise, AB, 1987), 87--102, 
NATO Adv. Sci. Inst. Ser. C: Math. Phys. Sci., 279, Kluwer Acad. Publ., Dordrecht, 1989.\\
\url{https://doi.org/10.1007/978-94-009-2399-7_4}

\bibitem[Iw1972]{Iw} K. Iwasawa, {\it Lectures on $p$-adic $L$-functions}, Annals of 
Mathematics Studies {\bf 74}, Princeton Univ. Press 1972, 112 pp. \\
\url{https://doi.org/10.1515/9781400881703}

\bibitem[Jau1986] {Jau1} J-F. Jaulent, {\it L'arithm\'etique des ${\ell}$-extensions 
(Th\`ese d'\'etat)},  Publications Math\'ematiques de Besan\c con {\bf 1}(1) (1986), 1--357. \\
\url{https://doi.org/10.5802/pmb.a-42}

\bibitem[Jau1990] {Jau2} J-F. Jaulent, {\it La th\'eorie de Kummer et le $K_2$ 
des corps de nombres}, J. Th\'eorie des Nombres de Bordeaux {\bf 2} (1990), 377--411.\\
\url{http://www.numdam.org/item/JTNB_1990__2_2_377_0/}

\bibitem[Jau1994${}^a$] {Jau22} J-F. Jaulent, {\it Sur le noyau sauvage des corps de nombres}, 
Acta Arith. {\bf 67} (1994), 335--348.
\url{http://eudml.org/doc/206636}

\bibitem[Jau1994${}^b$]{Jau23} J-F. Jaulent, {\it Classes logarithmiques des corps de nombres}, 
J. Th\'eorie Nombres Bordeaux {\bf 6}(2) (1994), 301--325.\\
\url{https://jtnb.centre-mersenne.org/article/JTNB_1994__6_2_301_0.pdf}

\bibitem[Jau2019]{Jau24} J-F. Jaulent,  {\it Note sur la conjecture de Greenberg}, 
J. Ramanujan Math. Soc. {\bf 34}(1) (2019), 59--80. \\
\url{https://www.i-scholar.in/index.php/rms/article/view/181346}

\bibitem[Jau2020]{Jau25}  J-F. Jaulent, {\it Normes universelles et conjecture de 
Greenberg}, Acta Arithmetica 194 (2020), 99--109. \\
\url{https://doi.org/10.4064/aa190623-27-9}

\bibitem[Jau2021]{Jau3} J-F. Jaulent, {\it Annulateurs de Stickelberger des groupes de 
classes logarithmiques}, Acta Arith. {\bf 203} (2021), 241--253. \\
\url{https://doi.org/10.4064/aa201127-22-6}

\bibitem[Jau2023]{Jau4} J-F. Jaulent, {\it Annulateurs circulaires des groupes de classes 
logarithmiques} (2023), Ann. Sci. Inst. Fourier (to appear). \\
\url{https://doi.org/10.48550/arXiv.2003.12301}

\bibitem[JaMi2006]{JM} J-F. Jaulent, A. Michel, {\it  Approche logarithmique des noyaux 
\'etales sauvages des corps de nombres}, Jour. Number Theory {\bf 120} (2006), 72--91. 
\url{https://doi.org/10.1016/j.jnt.2005.11.011}

\bibitem[JaSo2001]{JS} J-F. Jaulent, F. Soriano, {\it  Sur le noyau sauvage des corps de nombres
et le groupe des classes logarithmiques}, Math. Z. {\bf 238}(2) (2001), 335--354. 
\url{https://doi.org/10.1007/s002090100256}

\bibitem[Keu1989]{K} F. Keune, {\it  On the structure of the ${\rm K}_2$ of the ring of integers
in a number field}, $K$-Theory {\bf 2} (1989), 625--645. \\
\url{https://doi.org/10.1007/BF00535049}

\bibitem[KL1964]{KL} T. Kubota, H.W. Leopoldt, {\it Eine $p$-adische Theorie der Zetawerte. I. 
Einf\"uhrung der $p$-adischen Dirichletschen $L$-Funktionen}, J. Reine Angew. Math., 
{\bf 214/215} (1964), 328--339. \\
\url{https://eudml.org/doc/150624}

\bibitem[Kol2002]{Ko1} M. Kolster, {\it  $K$-theory and arithmetic}, In: Contemporary developments 
in algebraic $K$-theory 2002 (Karoubi, M., Ed.), ICTP Lecture Notes {\bf 15} (2003), 191--258.\\
\url{https://www.osti.gov/etdeweb/servlets/purl/20945047}

\bibitem[Kol1989]{Ko2} M. Kolster, {\it A relation between the $2$-primary parts of the main 
conjecture and the Birch--Tate conjecture}, Canad. Math. Bull. {\bf 32}(2) (1989), 248--251.
\url{https://doi.org/10.4153/CMB-1989-036-8}

\bibitem[KM2003]{KM} M. Kolster, A. Movahhedi, {\it  Bi-quadratic number fields with trivial
2-primary Hilbert kernels}, Proc. London Math. Soc. {\bf 87}(3) (2003), 109--136.
\url{https://doi.org/10.1112/S0024611502014028}

\bibitem[Lec1993]{Lec} O. Lecacheux, {\it Units in Number Fields and Elliptic Curves}, 
Advances in number theory (Kingston, ON, 1991), Oxford Sci. Publ., Oxford Univ. Press, 
New York, 1993, pp. 293--301. \\
\url{https://doi.org/10.1093/oso/9780198536680.003.0022}

\bibitem[Mai2018]{Mai} C. Maire, {\it Genus theory and governing fields}, 
New York J. Math. {\bf 24} (2018) 1056--1067.
\url{http://nyjm.albany.edu/j/2018/24-50v.pdf}

\bibitem[Mov1988]{Mov} A. Movahhedi, {\it Sur les $p$-extensions des corps $p$-rationnels}, 
Th\`ese, Universit\'e Paris VII, 1988. 
\url{http://www.unilim.fr/pages_perso/chazad.movahhedi/These_1988.pdf}

\bibitem[Ng1990]{Ng} T. Nguyen Quang Do, {\it  Une \'etude cohomologique de la partie 2-primaire
de $K_2{\mathcal O}$}, $K$-Theory {\bf 3} (1990), 523--542.\\
\url{http://dx.doi.org/10.1007/BF01054449}

\bibitem[Ng1986]{Ng1} T. Nguyen Quang Do, {\it  Sur la $\Z_p$-torsion de certains modules
galoisiens}, Ann. Sci. Inst. Fourier {\bf 36}(2) (1986), 27--46.\\
\url{https://doi.org/10.5802/aif.1045}

\bibitem[Ng1992]{Ng2} T. Nguyen Quang Do, {\it Analogues sup\'erieurs du noyau 
sauvage},  J. Th\'eorie des Nombres de Bordeaux {\bf 4} (1992), 263--271.\\
\url{http://www.numdam.org/item/JTNB_1992__4_2_263_0/}\\
\url{http://www.numdam.org/item/?id=JTNB_1993__5_1_217_0}

\bibitem[PARI]{PARI} The PARI Group, {\it PARI/GP, version \texttt{2.5.3} (2013)}, 
Univ. Bordeaux.\\ \url{http://pari.math.u-bordeaux.fr/download.html}

\bibitem[Qin1995]{Q1} H-R. Qin, {\it  The $4$-rank of $K_2O_F$ for real quadratic fields $F$},   
Acta Arith. {\bf 72}(4) (1995), 323--333. 
\url{http://eudml.org/doc/206799}

\bibitem[Qin2005]{Q2}  H-R. Qin, {\it The $2$-Sylow subgroup of $K_2O_F$ for number fields $F$},  
J. Algebra {\bf 284}(2) (2005), 494--519. \\
\url{https://doi.org/10.1016/J.JALGEBRA.2004.10.024}

\bibitem[Qin2010]{Q3}  H-R. Qin, {\it Reflection theorems and the $p$-Sylow subgroup 
of $K_2O_F$ for a number field $F$}, Journal of Pure and Applied Algebra
{\bf 214}(7) (2010), 1181--1192. 
\url{https://doi.org/10.1016/j.jpaa.2009.10.005}

\bibitem[Rib2008] {Rib} K.A. Ribet, {\it Bernoulli numbers and ideal classes}, SMF, 
Gazette \textbf{118} (2008), 42--49. \\
\url{https://smf.emath.fr/system/files/filepdf/gaz-118.pdf}

\bibitem[R{\O}2000]{RO} J. Rognes, P.A. {\O}stv\ae r, {\it Two-primary algebraic
${\rm K}$-theory of two-regular number fields}, Math. Z. {\bf 233} (2000), 251--263.\\
\url{https://doi.org/10.1007/PL00004795}

\bibitem[RWK2000]{RW}  J. Rognes, C. Weibel, {\it Two-primary algebraic $K$-theory 
of rings of integers in number fields. Appendix A by Manfred Kolster}, J. Amer. Math. 
Soc. {\bf 13}(1) (2000), 1--54. \\
\url{https://doi.org/10.1090/S0894-0347-99-00317-3}

\bibitem[Se1978]{Se} J-P. Serre, {\it  Sur le r\'esidu de la fonction z\^eta $p$-adique d'un corps
de nombres}, C.R. Acad. Sci. Paris 287, S\'erie I (1978), 183--188. \\
\url{https://doi.org/10.1007/978-3-642-39816-2_116}

\bibitem[Tate1970]{Ta1} J. Tate, {\it Symbols in arithmetic}, In:  Congr\`es 
 International de Math\'e\-matiques 1970, Tome 1, 201--211. \\
 \url{https://books.google.fr/books?id=AykSHAAACAAJ}

\bibitem[Tate1976]{Ta2} J. Tate, {\it  Relations between $K_2$ and Galois cohomology}, 
Invent. Math. {\bf 36} (1976), 257--274. 
\url{https://eudml.org/doc/142421}

\bibitem[YF2000]{YF} Q. Yue, K. Feng, {\it The $4$-rank of the tame kernel 
versus the $4$-rank of the narrow class group in quadratic number fields}, 
Acta Arith. {\bf 96}(2) (2000), 155--165. 
\url{https://doi.org/10.4064/aa96-2-4}

\bibitem[Yue2002]{Yue1} Q. Yue, {\it  On tame kernel and class group in terms of quadratic 
 forms}, Jour. Number Theory {\bf 96}(2) (2002), 373--387. \\
 \url{https://doi.org/10.1006/jnth.2002.2808}

\bibitem[Yue2005]{Yue2} Q. Yue, {\it The formula of 8-ranks of tame kernels}, 
J. Algebra {\bf 286}(1) (2005), 1--25. 
\url{https://doi.org/10.1016/j.jalgebra.2004.10.030}

\bibitem[Wa1980]{W1}  L.C. Washington, {\it Zeroes of $p$-adic $L$-functions}, 
S\'eminaire de th\'eorie des nombres, Paris, 1980-81 (S\'em. Delange--Pisot--Poitou), 
Birkh\"au\-ser, Boston, 1982, 337--357.\\
\url{https://books.google.fr/books?id=pjoBwwEACAAJ}

\bibitem[Wa1981]{W2} L.C. Washington, {\it Zeroes of $p$-adic $L$-functions}, 
S\'eminaire de Th\'eorie des Nombres de Bordeaux (1980-1981), 1--4.\\
\url{http://www.jstor.org/stable/44166382}

\bibitem[Wa1995]{W3}  L.C. Washington, {\it Siegel zeros for $2$-adic $L$-functions}, 
Number Theory, Halifax, NS (1994), CMS Conf. Proc., American Mathematical 
Society, Providence, RI, 1995, 393--396. \\
\url{https://books.google.fr/books?id=gzRti1o1bXkC}

\bibitem[Wa1996]{W4}  L.C. Washington, {\it A Family of Cubic Fields and Zeros 
of $3$-adic $L$-Functions}, Journal of Number Theory {\bf 63} (1996), 408--417.\\
\url{https://doi.org/10.1006/jnth.1997.2096}

\bibitem[Wa1997]{W} L.C. Washington, {\it Introduction to cyclotomic fields}, 
Graduate Texts in Math. {\bf 83}, Springer enlarged second edition 1997.\\
\url{https://doi.org/10.1007/978-1-4612-1934-7}

\end{thebibliography}
\end{document}